\newtheorem{theorem}{Theorem}[section]
\newtheorem{lemma}[theorem]{Lemma}
\newtheorem{proposition}[theorem]{Proposition}
\newtheorem{corollary}[theorem]{Corollary}
\theoremstyle{definition}
\newtheorem{definition}[theorem]{Definition}
\newtheorem{example}[theorem]{Example}
\theoremstyle{remark}
\newtheorem{remark}[theorem]{Remark}
\numberwithin{equation}{section}
\newcommand{\N}{\ensuremath{\mathbb{N}}}
\newcommand{\R}{\ensuremath{\mathbb{R}}}
\newcommand{\RI}{\ensuremath{\mathbb{R}_\mathcal{I}}}
\newcommand{\ol}{\overline}
\newcommand{\sign}[1]{\mathrm{sgn}(#1)}
 \DeclareMathOperator{\dd}{d}
\def\ee{\mathrm{e}}
\begin{document}

\title{\bf New Interval Calculus with Application to Interval Differential Equations}

\author{\\  Wei Liu$^{1}$,  Muhammad Aamir Ali$^{1}$, Yanrong An$^{2,}$\footnote{Corresponding Author} \\\\
\small $^{1}$School of Mathematics, Hohai University, Nanjing 210024,  China\\
\small $^{2}$School of Management, Nanjing University of Posts and Telecommunications,\\
\small  Nanjing 210003, China\\
\small E-mails: liuw626@hhu.edu.cn, mahr.muhammad.aamir@gmail.com, yanrong\_an@163.com    \vspace{7pt}\\
\small \today}
\date{}
 \maketitle

\noindent \textbf{Abstract:\ } This paper presents a systematic study of the calculus of interval-valued functions and its application to interval differential equations.
To this end, first, we introduce new interval arithmetic operations. Under new operations, the space of interval numbers becomes a strict linear space, and indeed a Hilbert space, whereas the traditional interval arithmetic yields only a semilinear space with a defective algebraic structure.
Secondly, by basing derivative and integral of interval-valued functions on the proposed operations, we retain every essential property of classical calculus while seamlessly incorporating ideas from the multiplicative calculus. The resulting unified “hybrid” framework eliminates the tedious case-by-case inspection of switching points required by the gH-derivative, leading to a markedly streamlined computational procedure.
Finally, we establish an existence theorem for solutions of interval differential equations within the new calculus and corroborate its validity and practicality through representative examples. In contrast to the gH-derivative approach, the number of potential solutions does not explode doubly with additional switching points, ensuring robustness in both theory and computation.

\medskip

\noindent \textbf{Keywords:\ }  Interval arithmetic operation; Interval-valued functions; Differentiability; Integrability;
Interval differential equations

\medskip

\noindent \textbf{2020 MSC:\ } 65G30;  26A06; 26E50; 34A07

\bigskip

\section{Introduction}

Interval analysis \cite{moo,sai,JKDW01,MG17}, as a mathematical approach for analyzing and handling uncertainty, captures and quantifies the uncertainty in data by representing numerical values in interval form rather than as single point values.
This method has wide applications in many branches of mathematics, including inequalities \cite{CostaTM2019,ZAYL20,ZAYL22}, differential equations \cite{SB09,MTM12,WR22,WangHZ2025},    optimization \cite{HW04,li,GYLZ22,GYT23,PengZY2025}, and decision making \cite{DaiJH2012,LiaoHC2018,YazdaniM2021,QinYJ2025}. In recent years, with the development of artificial intelligence technology, the application scope and influence of interval analysis have been further expanded, especially in engineering \cite{HaqRSU2023,ShehadehA2024},  environment \cite{HaoY2024,ZhuJM2025}, machine  learning \cite{LiWT2023,XuWH20241,MaGZ2025,SanchezL2025}, economics \cite{DragoC2022,RahmanMS2022,WangP2023,WangZZ2025}, and finance \cite{JiangMR2024,WangJJ20241,XiongT2014}, where it is used to handle the inherent imprecision of real-world data that traditional real numbers cannot capture.

In 1966, Moore published the first monograph on interval analysis \cite{moo66}, which made great contributions to the early development of theory and applications of interval analysis.
In this monograph, Moore presented the arithmetic operations of two intervals,  i.e.,
 for two certain intervals $\ol a=[a_l,a_r ]$ and $\ol b=[b_l,b_r ]$, then $\ol a\oplus\ol b=[a_l+  b_l,   a_r+  b_r]$ and
 $\ol a\ominus\ol b=[a_l-  b_r,   a_r-  b_l]$.
 However,    if $\ol a\oplus\ol b=0$, one can not get $\ol a=-\ol b$. This leads to the space of interval numbers being a semilinear space \cite{WR22} rather than a linear space.   Moreover, besides  $\ol a\ominus\ol a \neq 0$,
  $\ol a\oplus\ol b=\ol c$ does not imply that $\ol a=\ol c\ominus\ol b$.
 This indicates that there is a flaw in the operations of intervals.
  In order to establish a theory for interval number as real number, Hukuhara introduced a new concept of interval difference in \cite{MH67}, i.e., $\ol a\ominus_H\ol b=[a_l-  b_l,   a_r-  b_r]$. It is easy to see that the Hukuhara subtraction  is the
 inverse operation of addition. However, the Hukuhara difference  of two intervals does not always exist. For example, $[1,2]\ominus_H[3,5]=[-2,-3]$, the result does  not make sense.  In 1979, Markov \cite{M79} presented a new difference  of two intervals, which is consistent with the concept later called the generalized Hukuhara difference (gH-difference) by Stefanini in \cite{Stefanini2008}, i.e.,
 $$\ol a\ominus_{gH}\ol b=[\min\{a_l-  b_l,   a_r-  b_r\},\max\{a_l-  b_l,   a_r-  b_r\}].$$
 The gH-difference always exists, and  it is the inverse operation of addition in specific situations.      So  it has been widely used in various papers \cite{SB09,LS10,Chalco2013,VL15,HLO17,HVN18,CMDJ19,WR22}.

 On the other hand, based on the arithmetic operations of intervals, many researchers have focused on the calculus of interval-valued functions with application to interval differential equations. For example, Moore \cite{moo} introduced the
  integration of interval
functions, with an introduction to automatic differentiation,  and used it to  treat integral and differential equations.
Based on the Hukuhara difference,
   Wu  and Gong \cite{WG2000} defined the Henstock integral of interval-valued functions, and studied the necessary and sufficient conditions of integrability and characterization of integrable interval-valued functions.

In \cite{SB09}, based on the gH-difference,
 Stefanini and Bede introduced  two definitions for the derivative of an interval-valued function and their properties.
Moreover, local existence and uniqueness of
two solutions was obtained together with characterizations of the solutions of an interval
differential equation by ODE systems.
Later in \cite{MTM12}, an interval initial value problem with a second type Hukuhara derivative was considered.
The existence of solutions,   continuous dependence of the solution on initial value, and the compactness of solutions set  were shown.
In \cite{Chalco2013},  conditions, examples and counterexamples for limit, continuity, integrability, and differentiability for interval-valued functions were given by using the generalized Hukuhara differentiability.
In \cite{WR22},  some sufﬁcient conditions are provided in order to deduce the existence of solutions without switching points, and also for mixed solutions with a unique switching point, while stopping time problem of the ﬁrst order multidimensional interval-valued dynamic system was discussed in \cite{WangHZ2025}. These advancements have not only enriched the theoretical foundations but also broadened the scope of applications, particularly in fuzzy systems and stochastic analysis.
However, due to the existence of two types of derivative, it is necessary to consider the problem of switching points during the differentiation process \cite{Qiu23}. Furthermore, when solving differential equations, as the domain expands, the number of solutions to the equation may increase doubly.

After observing the shortcomings of previous literatures in interval analysis, particularly the algebraic deficiencies of traditional interval arithmetic operations, this work emerges to address several critical limitations.
The conventional approach renders the space of interval numbers $\RI$ merely a semilinear space, creating substantial obstacles in developing a consistent calculus for interval-valued functions. These structural weaknesses manifest most prominently in differentiation and integration theories, where existing definitions lead to operational inconsistencies and computational complexities. This work aims to contribute to this growing body of knowledge by addressing key challenges and exploring new directions in the calculus of interval-valued functions.

Motivated by these challenges, we introduce novel arithmetic operations that restore complete linearity to $\RI$, transforming it into a proper Hilbert space equipped with a well-defined inner product ``$\diamond$" and norm $\|\cdot\|$. This enhanced structure enables us to propose:
\begin{itemize}
    \item A new derivative concept unifying classical and multiplicative approaches;
    \item An integral definition overcoming previous computational limitations;
    \item A rigorous framework for interval differential equations.
\end{itemize}

The proposed calculus demonstrates superior computational efficiency compared to existing methods, while maintaining intuitive interpretability for applications ranging from uncertainty quantification to growth process modeling. Our results not only resolve long-standing theoretical inconsistencies, but also provide practical tools for working with interval-valued functions in scientific computing and engineering applications.

The rest of the paper are organized as follows.
In Section 2, we present  new interval arithmetic operations, which lead to the space of all interval numbers being a linear space. Further, by introducing new distance, norm, and inner product formulas, we prove that the space of all interval numbers is a Hilbert space.
Section 3 introduces the interval-valued function and its continuity. Under the new norm, the space of continuous interval-valued functions is a Banach space.
Section 4 mainly introduces the new concept of the derivative of interval-valued functions and its related properties. The new derivative is a combination of the traditional derivative and the multiplicative derivative, which simplifies the differentiation of interval-valued functions and eliminates the need to consider the switching points. Compared to the gH-derivative, our derivative has  advantages in  computation.
In Section 5, we consider the Riemann-type integral of interval-valued functions and present many related properties, including linearity, monotonicity, subinterval integrability, and  additivity. At the same time, the fundamental theorem of calculus, integration by parts formula, monotone convergence theorem, and dominated convergence theorem are also proved.
In Section 6, by using the well-known Schauder's fixed point theorem, we present an existence theorem for the solutions of interval differential equations and demonstrate the effectiveness and practicality of our method through several specific examples.

\section{The Space $\RI$}

Let $\R$ be the set of real numbers, and $  a_l,  a_r\in\R$ with $  a_l<   a_r$, the  closed interval $\ol a=[ a_l,  a_r]$ is called an  {\it   interval number}.
 Denote  by  $\RI$ the set of all   interval numbers, i.e.,
 $$\RI=\{\ol a~|~\ol  a=[ a_l,  a_r],~ a_l<   a_r\}.$$
{\bf In this paper,   real numbers are NOT considered as  (degenerate) interval numbers.}

For any interval number $\ol a=[  a_l,  a_r]\in \RI$, let
\begin{equation}\label{eq1}
a_c=\frac{a_l+a_r}{2},\qquad
a_w=\frac{  a_r- a_l}{2}.
\end{equation}
Then $a_c$ $(\in \R)$ is called the \textit{center} of $\ol a$ and $a_w$ $(>0)$ is called the \textit{radius} of $\ol a$. In fact, for any interval number, its  center and radius are unique. In this sense, we can rewrite the interval number $\ol a=[a_l,a_r]$ by
$$\ol a\triangleq \langle a_c;a_w\rangle.$$
This  notation is also shown in many papers, such as
\cite{BS14,Stefanini19,Stefanini192}.

\subsection{Orderings}
This subsection is devoted to some common used orderings in $\RI$.

\begin{definition}\label{TOrder}(\cite{BS14})
For any $\ol a,\ol b\in \RI,$ $\ol a = \langle a_c;a_w\rangle$, $\ol b= \langle b_c;b_w\rangle,$ we say that
\begin{enumerate}[(1)]
 \item $\ol a=\ol b$  if   $a_c=b_c$ and $a_w=b_w$.

\item $\ol a<\ol b$  if  $a_c< b_c$ or $a_c=b_c$ and $a_w<b_w$.

\item $\ol a\leq\ol  b$  if  $\ol a< \ol b$ or $\ol a=\ol b$.
\end{enumerate}
\end{definition}

Let $\ol a,~\ol b\in \RI$, $\ol a = \langle a_c;a_w\rangle$, $\ol b= \langle b_c;b_w\rangle,$
the function $\phi:\RI\times \RI\to \R$ is given by
\begin{equation}\label{phi}
  \phi(\ol a,\ol b)=(a_c-b_c)+\left(1-{\sign {|a_c-b_c|}}\right)\left(\frac{a_w}{b_w}-1\right),
\end{equation}
where  $\sign{.}$ is the signum function.
\begin{lemma} Assume that   $\ol a,~\ol b\in \RI$, then
   $\ol a\leq\ol  b$ if and only if $\phi(\ol a,\ol b)\leq 0.$
\end{lemma}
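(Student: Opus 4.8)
The plan is to unwind the definition of $\phi$ and the ordering into the two mutually exclusive cases governed by whether $a_c = b_c$, and to verify the equivalence in each. Write $\ol a = \langle a_c; a_w\rangle$, $\ol b = \langle b_c; b_w\rangle$, and recall from Definition \ref{TOrder} that $\ol a \leq \ol b$ means either $a_c < b_c$, or $a_c = b_c$ together with $a_w \leq b_w$.

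First I would treat the case $a_c \neq b_c$. Then $|a_c - b_c| > 0$, so $\sign{|a_c-b_c|} = 1$ and the second term of \eqref{phi} vanishes, giving $\phi(\ol a, \ol b) = a_c - b_c$. Hence $\phi(\ol a,\ol b) \leq 0$ is equivalent to $a_c < b_c$ (strict, since we are in the subcase $a_c \neq b_c$), which by Definition \ref{TOrder} is exactly $\ol a < \ol b$, and in particular $\ol a \leq \ol b$; conversely $\ol a \leq \ol b$ with $a_c \neq b_c$ forces $a_c < b_c$. So the equivalence holds here.

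Next I would treat the case $a_c = b_c$. Then $\sign{|a_c-b_c|} = \sign{0} = 0$, so the factor $1 - \sign{|a_c-b_c|}$ equals $1$, and since $a_c - b_c = 0$ we get $\phi(\ol a, \ol b) = \frac{a_w}{b_w} - 1$. Because $b_w > 0$ for every interval number, $\phi(\ol a,\ol b) \leq 0$ is equivalent to $\frac{a_w}{b_w} \leq 1$, i.e. $a_w \leq b_w$; and when $a_c = b_c$, the condition $\ol a \leq \ol b$ is by Definition \ref{TOrder} precisely $a_w \leq b_w$ (covering both $\ol a < \ol b$ when $a_w < b_w$ and $\ol a = \ol b$ when $a_w = b_w$). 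This settles the second case, and combining the two completes the proof.

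The argument is essentially a case analysis with no real obstacle; the only point requiring a moment's care is the convention $\sign{0} = 0$ (so that the auxiliary term is switched \emph{on} exactly when the centers coincide), together with the standing hypothesis $b_w > 0$, which is what lets us clear the denominator in the ratio $a_w/b_w$ without sign reversal. I would make both of these explicit in the write-up.
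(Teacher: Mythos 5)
Your proof is correct and follows essentially the same route as the paper's: both unwind the definition of $\phi$ according to whether $a_c=b_c$ (which controls the signum factor) and reduce each case to the corresponding clause of Definition \ref{TOrder}, with the positivity of $b_w$ justifying the step from $\frac{a_w}{b_w}\leq 1$ to $a_w\leq b_w$. The only difference is organizational — you argue both directions at once within each case, while the paper separates sufficiency and necessity — which does not change the substance.
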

\begin{proof}
  (Sufficiency)  Since $\phi(\ol a,\ol b)\leq 0,$
  if $a_c\neq b_c$, then
  $$\phi(\ol a,\ol b)=a_c-b_c\leq 0,$$
  hence,
  $a_c<b_c,$
  which implies $\ol a<\ol b$;
if $a_c=b_c$, then
$$\phi(\ol a,\ol b)=\frac{a_w}{b_w}-1\leq 0,$$
  i.e.,
  $a_w\leq b_w,$
  which implies $\ol a\leq \ol b$.

  (Necessity)  If $\ol a=\ol b$, then
  $a_c=b_c$ and $a_w=b_w$. Therefore
  $$\phi(\ol a,\ol b)= 0.$$
   If $\ol a<\ol b$, then $a_c<b_c$ or $a_c=b_c$ and $a_w<b_w$. When $a_c<b_c$, one has
  $$\phi(\ol a,\ol b)=a_c-b_c< 0.$$
  When $a_c=b_c$ and $a_w<b_w$, one has
  $$\phi(\ol a,\ol b)=\frac{a_w}{b_w}-1< 0.$$
  The proof is therefore complete.
\end{proof}

\begin{example}
Consider the interval  numbers $\ol a=[0,3],~\ol b=[1,2],~\ol c=[3,4].$ Then we have
 $$\phi(a,b)=2>0,\qquad \phi(a,c)=-2<0.$$
Thus, $b<a<c$.
\end{example}

\begin{definition}\label{defpreceq}(\cite{BS14})
  For any $\ol a,\ol b\in \RI$, $\ol a = \langle a_c;a_w\rangle$, $\ol b= \langle b_c;b_w\rangle,$ we define
\begin{itemize}
    \item  $\ol a\preccurlyeq \ol b$ if $a_c\leq  b_c$   and $ a_w\leq  b_w$.
   \item  $\ol a\prec\ol b$ if $\ol a\preccurlyeq \ol b$ but $\ol a\neq  \ol b$.
\end{itemize}
\end{definition}

By  Definitions \ref{TOrder} and  \ref{defpreceq}, we have
\begin{lemma}\label{lem2.8}
  If $\ol a\preccurlyeq \ol b$ then $\ol a\leq  \ol b$, but not vice versa.
\end{lemma}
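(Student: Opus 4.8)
The plan is to unwind both orderings from their definitions and settle the implication by a short case distinction, then exhibit an explicit counterexample for the converse.

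First I would assume $\ol a\preccurlyeq\ol b$, so that by Definition~\ref{defpreceq} we have $a_c\le b_c$ and $a_w\le b_w$, and then split into two cases according to Definition~\ref{TOrder}. If $a_c<b_c$, then directly $\ol a<\ol b$, hence $\ol a\le\ol b$. If instead $a_c=b_c$, the hypothesis $a_w\le b_w$ gives either $a_w<b_w$, in which case $\ol a<\ol b$, or $a_w=b_w$, in which case $\ol a=\ol b$; in both subcases $\ol a\le\ol b$. This establishes the forward implication.

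For the failure of the converse, I would produce interval numbers with $\ol a\le\ol b$ but $a_w>b_w$, so that $\ol a\preccurlyeq\ol b$ fails while the total order $\le$, which compares radii only when the centers coincide, still holds. Concretely, taking $\ol a=[0,3]=\langle 3/2;3/2\rangle$ and $\ol c=[3,4]=\langle 7/2;1/2\rangle$ works: since $3/2<7/2$ we get $\ol a<\ol c$, hence $\ol a\le\ol c$, whereas $a_w=3/2>1/2=c_w$ rules out $\ol a\preccurlyeq\ol c$. This is exactly the pair appearing in the example above.

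There is essentially no hard step here; the only point requiring a little care is the boundary case $a_c=b_c$, where one must carry the non-strict inequality $a_w\le b_w$ coming from $\preccurlyeq$ and route it correctly into the disjunction defining $\le$ (strict radius inequality yielding $\ol a<\ol b$, equality yielding $\ol a=\ol b$). The counterexample then makes visible that the inclusion of relations is strict.
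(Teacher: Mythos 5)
Your argument is correct and is exactly the routine unwinding of Definitions \ref{TOrder} and \ref{defpreceq} that the paper itself invokes (it states the lemma without writing out a proof); your case split on $a_c<b_c$ versus $a_c=b_c$ handles the forward direction completely, and the pair $[0,3]$, $[3,4]$ is a valid witness that the converse fails since $3/2<7/2$ gives $\ol a\leq\ol c$ while $a_w=3/2>1/2=c_w$ blocks $\preccurlyeq$. Nothing to add.
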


\begin{definition}\label{defsubseteq}(\cite{ZDF19})
  For any $\ol a,\ol b\in \RI$,  $\ol a = \langle a_c;a_w\rangle$, $\ol b= \langle b_c;b_w\rangle,$ we define
\begin{itemize}
  \item  $\ol a\subseteq\ol b$ if $b_l\leq  a_l<a_r\leq b_r$ .
  \item  $\ol a\subset\ol b$ if  $\ol a\subseteq\ol b$ but $\ol a\neq \ol b$.
\end{itemize}
\end{definition}
\begin{remark} From Definitions \ref{TOrder}, \ref{defpreceq}, and  \ref{defsubseteq}, it is easy to see that $``\leq"$ is a total ordering, while $``\subseteq"$ and $``\preccurlyeq"$ are partial orderings.
\end{remark}

By Definitions \ref{TOrder}, \ref{defpreceq}, \ref{defsubseteq}, and \eqref{eq1}, one has
\begin{proposition} The following statements are equivalent:
\begin{enumerate}[(1)]
  \item  $\ol a=\ol b$;
  \item  $a_c=b_c$ and  $  a_w=  b_w$;
  \item  $  a_l=   b_l$ and  $  a_r = b_r$;
  \item  $\ol a\leq \ol b$ and  $\ol b\leq \ol a$.
    \item  $\ol a\subseteq \ol b$ and  $\ol b\subseteq \ol a$.
            \item  $\ol a\preccurlyeq \ol b$ and  $\ol b\preccurlyeq \ol a$.
\end{enumerate}
\end{proposition}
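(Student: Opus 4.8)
\quad The cleanest route is to take statement (2) as a hub and to establish (1)~$\Leftrightarrow$~(2), (2)~$\Leftrightarrow$~(3), (2)~$\Leftrightarrow$~(4), (3)~$\Leftrightarrow$~(5), and (2)~$\Leftrightarrow$~(6); transitivity of logical equivalence then closes the loop. First I would dispose of (1)~$\Leftrightarrow$~(2), which is nothing more than Definition~\ref{TOrder}(1) read through the representation $\ol a=\langle a_c;a_w\rangle$. Next, (2)~$\Leftrightarrow$~(3): by \eqref{eq1} the correspondence $(a_l,a_r)\mapsto(a_c,a_w)=\bigl(\tfrac{a_l+a_r}{2},\tfrac{a_r-a_l}{2}\bigr)$ is an invertible affine map, with inverse $(a_c,a_w)\mapsto(a_c-a_w,\,a_c+a_w)$; hence equality of the center--radius pairs is equivalent to equality of the endpoint pairs --- one direction substitutes the formulas, the other solves the linear system.

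The step needing a little care is (2)~$\Leftrightarrow$~(4). If (2) holds, then $a_c=b_c$ and $a_w/b_w=1$, so $\phi(\ol a,\ol b)=0$ and, by symmetry, $\phi(\ol b,\ol a)=0$; the lemma characterising ``$\leq$'' through $\phi$ then gives $\ol a\leq\ol b$ and $\ol b\leq\ol a$. Conversely, from $\ol a\leq\ol b$ and $\ol b\leq\ol a$ we get $\phi(\ol a,\ol b)\leq 0$ and $\phi(\ol b,\ol a)\leq 0$. Here one must first rule out $a_c\neq b_c$: in that case $\sign{|a_c-b_c|}=1$, so the two inequalities become $a_c-b_c\leq 0$ and $b_c-a_c\leq 0$, forcing $a_c=b_c$, a contradiction. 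With $a_c=b_c$ secured, the inequalities reduce to $a_w/b_w-1\leq 0$ and $b_w/a_w-1\leq 0$, i.e. $a_w=b_w$, which is (2). (Alternatively, one may simply invoke the antisymmetry of ``$\leq$'', which is already implicit in the proof of that lemma.)

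It remains to handle (3)~$\Leftrightarrow$~(5) and (2)~$\Leftrightarrow$~(6), both purely mechanical. For the former, Definition~\ref{defsubseteq} turns $\ol a\subseteq\ol b$ and $\ol b\subseteq\ol a$ into $b_l\leq a_l<a_r\leq b_r$ and $a_l\leq b_l<b_r\leq a_r$; pairing $b_l\leq a_l$ with $a_l\leq b_l$ and $a_r\leq b_r$ with $b_r\leq a_r$ yields $a_l=b_l$ and $a_r=b_r$, and the converse is immediate. For the latter, Definition~\ref{defpreceq} turns $\ol a\preccurlyeq\ol b$ and $\ol b\preccurlyeq\ol a$ into $a_c\leq b_c\leq a_c$ and $a_w\leq b_w\leq a_w$, hence $a_c=b_c$ and $a_w=b_w$, and again the converse is trivial. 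Overall the argument is elementary; the only genuine obstacle --- and it is minor --- is the signum-induced case distinction in the ``$\leq$'' direction of (4), where equality of centers has to be obtained before equality of radii can be extracted.
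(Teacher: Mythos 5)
Your proof is correct. The paper offers no proof at all for this proposition—it is stated as an immediate consequence of Definitions \ref{TOrder}, \ref{defpreceq}, \ref{defsubseteq} and \eqref{eq1}—and your argument is exactly the routine verification the authors leave implicit, with the only nontrivial point (the antisymmetry of ``$\leq$'' via the signum case split in $\phi$) handled correctly; note also that the forward direction of (2)$\Rightarrow$(4) follows even more directly from Definition \ref{TOrder}(3), since $\ol a=\ol b$ already gives $\ol a\leq\ol b$ by definition.
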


\subsection{Arithmetic Operations}
In this subsection, we introduce new arithmetic operations of interval numbers, and make a comparison with the classical interval operations.

\subsubsection{Addition}
\begin{definition}\label{d2}
For any $\ol a,\ol b\in \RI$,  we define
\begin{equation} \label{eq2-1}
\ol a+\ol b=\langle  a_c+  b_c;  a_w   b_w\rangle.
\end{equation}
\end{definition}

\begin{lemma}
 For any $\ol a,\ol b\in \RI$,  there exists a unique $\ol c= [ c_l,  c_r]$ in $\RI$ such that $\ol c=\ol a+\ol b$, and
\begin{align*} %\label{eq21}
c_l=\frac{2(a_l+a_r+b_l+b_r)-(a_r-a_l)(b_r-b_l)}{4},\\
c_r=\frac{2(a_l+a_r+b_l+b_r)+(a_r-a_l)(b_r-b_l)}{4}.
\end{align*}
\end{lemma}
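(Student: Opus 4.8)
The plan is to derive both claims directly from the center--radius form of the new addition in Definition~\ref{d2}. First I would note that $\ol a+\ol b$ is \emph{defined} as the pair $\langle a_c+b_c;\,a_w b_w\rangle$, so the only thing needing verification before one may speak of endpoints is that this pair genuinely encodes an element of $\RI$, i.e.\ that its radius is strictly positive. Since $\ol a,\ol b\in\RI$ means $a_l<a_r$ and $b_l<b_r$, formula~\eqref{eq1} gives $a_w>0$ and $b_w>0$, hence $a_w b_w>0$. Therefore $\langle a_c+b_c;\,a_w b_w\rangle$ is the center--radius encoding of a bona fide interval number $\ol c=[c_l,c_r]$ with $c_l<c_r$.

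Next I would pass from the center--radius encoding to endpoints via the elementary identities $c_l=c_c-c_w$ and $c_r=c_c+c_w$, applied with $c_c=a_c+b_c$ and $c_w=a_w b_w$. Substituting $a_c=\frac{a_l+a_r}{2}$, $b_c=\frac{b_l+b_r}{2}$, $a_w=\frac{a_r-a_l}{2}$, $b_w=\frac{b_r-b_l}{2}$ and clearing the common denominator $4$ reproduces exactly the two displayed formulas for $c_l$ and $c_r$; this is a one-line simplification with no subtlety.

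For uniqueness I would invoke the observation recorded just after \eqref{eq1}, namely that the center and radius of an interval number are uniquely determined by it (and conversely). Since $a_c+b_c$ and $a_w b_w$ are completely determined by $\ol a$ and $\ol b$, so is the interval $\ol c$ they encode. I do not anticipate any real obstacle here: the statement is essentially an unpacking of Definition~\ref{d2} together with the identities $c_l=c_c-c_w$ and $c_r=c_c+c_w$, and the only step that is a genuine verification rather than a routine computation is confirming $a_w b_w>0$, which keeps the sum inside $\RI$ (so that it does not degenerate to a point, which is excluded in this paper).
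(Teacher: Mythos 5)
Your proposal is correct and follows essentially the same route as the paper's proof: unpack Definition \ref{d2} into center--radius form, convert to endpoints via $c_l=c_c-c_w$, $c_r=c_c+c_w$, and substitute \eqref{eq1}. You are in fact slightly more thorough than the paper, which omits the explicit checks that $a_w b_w>0$ (so the sum stays in $\RI$) and that uniqueness follows from the uniqueness of the center--radius representation.
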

\begin{proof} By Definition \ref{d2}, we have
  \begin{align*}
&\ol a+\ol b\\
=&\langle  a_c+  b_c;  a_w   b_w\rangle\\
=&[a_c+  b_c-  a_w   b_w, a_c+  b_c+ a_w   b_w]\\
 =&\left[\frac{a_l+a_r}{2}+\frac{b_l+b_r}{2}-\frac{a_r-a_l}{2}\frac{b_r-b_l}{2},\frac{a_l+a_r}{2}+\frac{b_l+b_r}{2}+\frac{a_r-a_l}{2}\frac{b_r-b_l}{2}\right]\\
 =&\left[\frac{2(a_l+a_r+b_l+b_r)-(a_r-a_l)(b_r-b_l)}{4},\frac{2(a_l+a_r+b_l+b_r)+(a_r-a_l)(b_r-b_l)}{4}\right].
\end{align*}
Let
\begin{align*} %\label{eq21}
c_l=\frac{2(a_l+a_r+b_l+b_r)-(a_r-a_l)(b_r-b_l)}{4},\\
c_r=\frac{2(a_l+a_r+b_l+b_r)+(a_r-a_l)(b_r-b_l)}{4},
\end{align*}
then we have $\ol c=\ol a+\ol b$.
\end{proof}

\begin{proposition}\label{p1}
 For any   $\ol a,~\ol b,~\ol c,~\ol d\in \RI,$  one has
\begin{enumerate}[(1)]
\item  $\ol a+\ol b=\ol b+\ol a;$

\item $\ol a+(\ol b+\ol c)=(\ol a+\ol b)+\ol c;$

\item there exists a  unique {\it zero element}  $\ol0 =\langle    0;  1\rangle=[-1,1] $ such that $\ol a+\ol 0=\ol a;$

\item there exists a  unique $\ol e\in \RI$ such that $\ol a+\ol e=\ol 0,$  here $\ol e $ is called the {\it negative element} of $\ol a$ and denoted by  $\ol e=-\ol a$;

\item $\ol a+\ol c\leq \ol b+\ol c$ if and only if $\ol a\leq \ol b$;

\item  $\ol a\leq \ol b$ and $\ol c\leq \ol d$ implies $\ol a+\ol c\leq \ol b+\ol d$.
\end{enumerate}
\end{proposition}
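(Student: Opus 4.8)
The plan is to verify each of the six claims directly from Definition~\ref{d2}, translating everything into statements about centers and radii, since the new addition acts componentwise as ordinary addition on centers and as ordinary multiplication on radii. Writing $\ol a=\langle a_c;a_w\rangle$, $\ol b=\langle b_c;b_w\rangle$, etc., recall that $\ol a+\ol b=\langle a_c+b_c; a_w b_w\rangle$, and that by the earlier Proposition the equality, the ordering $\leq$, and membership in $\RI$ are all characterized purely through $(\cdot_c,\cdot_w)$ with the constraint $\cdot_w>0$.

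For (1) and (2), commutativity and associativity of $+$ on $\RI$ reduce to commutativity and associativity of $+$ on the centers (in $\R$) and of $\cdot$ on the radii (in $(0,\infty)$), both standard. For (3), I would check $\langle a_c;a_w\rangle+\langle 0;1\rangle=\langle a_c+0; a_w\cdot 1\rangle=\langle a_c;a_w\rangle$, identify $\langle 0;1\rangle=[-1,1]$, and note uniqueness: if $\ol a+\ol z=\ol a$ for all $\ol a$ (or even one $\ol a$), then $a_c+z_c=a_c$ and $a_w z_w=a_w$ force $z_c=0$ and $z_w=1$. For (4), given $\ol a=\langle a_c;a_w\rangle$, the required $\ol e=\langle e_c;e_w\rangle$ must satisfy $a_c+e_c=0$ and $a_w e_w=1$, so $e_c=-a_c$ and $e_w=1/a_w$; since $a_w>0$ we get $e_w=1/a_w>0$, hence $\ol e=\langle -a_c; 1/a_w\rangle\in\RI$ is well-defined and unique. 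It is worth remarking here that this is exactly the point where the new arithmetic repairs the defect of the classical operations flagged in the introduction: the negative element always exists and is genuinely inverse.

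For (5), using the characterization $\ol x\leq\ol y \iff \phi(\ol x,\ol y)\leq 0$ (or directly Definition~\ref{TOrder}), I would argue by cases on whether $a_c=b_c$. Adding $\ol c$ shifts both centers by $c_c$ and multiplies both radii by the same positive number $c_w$; since adding a constant preserves the order of reals and multiplying by a positive constant preserves the order of positive reals, the lexicographic condition ``$a_c<b_c$, or ($a_c=b_c$ and $a_w\le b_w$)'' is equivalent to the same condition for $\ol a+\ol c$ versus $\ol b+\ol c$. This gives the ``if and only if'' at once. Finally (6) follows by combining (5) with (1): from $\ol a\leq\ol b$ we get $\ol a+\ol c\leq\ol b+\ol c$, and from $\ol c\leq\ol d$ we get $\ol b+\ol c=\ol c+\ol b\leq\ol d+\ol b=\ol b+\ol d$, so transitivity of $\leq$ (a total order) yields $\ol a+\ol c\leq\ol b+\ol d$.

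The only part that requires genuine care rather than a one-line reduction is (5): the lexicographic nature of $\leq$ means one must track what happens to the two coordinates separately, and in particular confirm that the auxiliary second coordinate (the radius ratio test in $\phi$) transforms correctly under the nonlinear-looking operation $a_w\mapsto a_w c_w$ — it does, precisely because $c_w>0$ and $(a_w/b_w)\le 1 \iff (a_w c_w)/(b_w c_w)\le 1$. Everything else is a routine translation into coordinates, so I would present (1)--(4) compactly and spend the bulk of the written proof on the case analysis for (5), then deduce (6).
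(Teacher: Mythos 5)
Your proposal is correct and follows essentially the same coordinate-wise route as the paper: reduce everything to centers (ordinary addition) and radii (multiplication on $(0,\infty)$), solve for the zero and negative elements, and observe that adding $\ol c$ preserves the lexicographic order because $c_w>0$. The only (cosmetic) difference is in part (6), where you deduce the result from part (5), commutativity, and transitivity of the total order, whereas the paper runs a direct case analysis on Definition~\ref{TOrder}; your derivation is if anything slightly cleaner.
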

\begin{proof}(1) $\ol a+\ol b=\langle  a_c+  b_c;  a_w   b_w\rangle=\langle  b_c+  a_c;  b_w   a_w\rangle=\ol b+\ol a$.

(2) $\ol a+(\ol b+\ol c)=\langle  a_c; a_w\rangle+  \langle b_c+  c_c;  b_w   c_w\rangle=\langle a_c+b_c+  c_c; a_w b_w   c_w\rangle.$

$(\ol a+\ol b)+\ol c=\langle   a_c+  b_c;  a_w   b_w\rangle+  \langle   c_c;   c_w\rangle=\langle a_c+b_c+  c_c; a_w b_w   c_w\rangle.$

Hence,  $\ol a+(\ol b+\ol c)=(\ol a+\ol b)+\ol c.$

(3) Let $\ol 0=\langle    0_c;  0_w \rangle,$ then we have
$$\ol a+\ol 0=\langle  a_c+  0_c;  a_w0_w \rangle=\langle  a_c;  a_w \rangle=\ol a.$$
This implies  $0_c=0 $ and $0_w=1$, i.e., $\ol0 =\langle    0;  1\rangle=[-1,1].$
Hence, $\ol0 =[-1,1]  $ is the zero element in $\RI$, and the uniqueness is obvious.

(4) Let $\ol e=\langle e_c; e_w\rangle$ such that $\ol a+\ol e=\ol 0.$ Then, according to Definition \ref{d2}, we have
$$a_c+e_c=0,~~a_we_w=1.$$
Thus, $$\ol e=-\ol a=\left\langle-a_c; \frac{1}{a_w}\right\rangle.$$
Hence, for any number $\ol a$, the negative element $-\ol a=\langle-a_c; \frac{1}{a_w}\rangle$, and the uniqueness is obvious.

(5) Since
\begin{align*}
 & \phi(\ol a+\ol c,\ol b+\ol c)\\=&((a_c+c_c)-(b_c+c_c))+\left(1-{\sign {|(a_c+c_c)-(b_c+c_c)|}}\right)\left(\frac{a_wc_w}{b_wc_w}-1\right),\\
  =&(a_c-b_c))+\left(1-{\sign {|a_c-b_c|}}\right)\left(\frac{a_w}{b_w}-1\right),\\
  =& \phi(\ol a,\ol b),
\end{align*}
Thus, by Theorem  \ref{phi}, $\ol a+\ol c\leq \ol b+\ol c$ if and only if $\ol a\leq \ol b$.

(6) Since $\ol a\leq\ol  b$ and $\ol c\leq\ol  d$, according to Definition \ref{TOrder}, we have
\begin{align*}
a_c<b_c, \text{   or  } a_c=b_c,~0<a_w\leq b_w,
\end{align*}
{and}
\begin{align*}
c_c<d_c, \text{   or  } c_c=d_c,~0<c_w\leq d_w.
\end{align*}
It is easy to obtain
$$\left\{\begin{array}{ll}
a_c+c_c=b_c+d_c,~a_wc_w\leq b_wd_w,~\text{or}\\
a_c+c_c<b_c+d_c,
\end{array}\right.$$
that is,
$$\ol a+\ol c\leq \ol b+\ol d .$$
Therefore, we complete the proof.
\end{proof}

Recall that,  in \cite{moo66} and the references therein, the sum is defined as
\begin{align}\label{eq2-11}
  \ol a\oplus\ol b=[a_l+b_l, a_r+b_r]=\langle a_c+b_c;a_w+b_w\rangle.
\end{align}

From \eqref{eq2-1} and \eqref{eq2-11}, it is easy to see that the sums $\ol a+\ol b$ and $\ol a\oplus\ol b$ have the same center, but their radiuses   depend on the  radiuses of $\ol a$ and $\ol b$.
Moreover, we have

\begin{proposition}\label{prob2.12}
 For any $\ol a,\ol b\in \RI$, Let $\ol c=\ol a+\ol b$, and  $\ol d=\ol a\oplus\ol b$.
\begin{enumerate}[(1)]
  \item If $\frac{1}{a_w}+\frac{1}{b_w}=1$, then  $\ol c= \ol d;$
  \item  If $\frac{1}{a_w}+\frac{1}{b_w}<1$, then   $\ol d\subset \ol c;$
  \item  If $\frac{1}{a_w}+\frac{1}{b_w}>1$, then $\ol c\subset \ol d.$
\end{enumerate}
\end{proposition}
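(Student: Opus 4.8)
The strategy is to notice that $\ol c$ and $\ol d$ share a common center and then to compare their radii. By Definition~\ref{d2} we have $\ol c=\langle a_c+b_c;\,a_w b_w\rangle$, while by \eqref{eq2-11} we have $\ol d=\langle a_c+b_c;\,a_w+b_w\rangle$; thus both intervals are centered at $a_c+b_c$ and differ only through their radii $a_w b_w$ and $a_w+b_w$. I would first record the elementary fact that, for a fixed real $\lambda$ and positive reals $r\leq s$, the interval $\langle\lambda;r\rangle=[\lambda-r,\lambda+r]$ satisfies $\langle\lambda;r\rangle\subseteq\langle\lambda;s\rangle$, with proper inclusion exactly when $r<s$. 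This is immediate from Definition~\ref{defsubseteq}: $\lambda-s\leq\lambda-r<\lambda+r\leq\lambda+s$, and the inclusion is proper iff one of the outer inequalities is strict, i.e.\ iff $r<s$.

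With this observation in hand, the whole proposition reduces to comparing $a_w b_w$ with $a_w+b_w$. Since $a_w>0$ and $b_w>0$, dividing by $a_w b_w$ shows that $a_w b_w-(a_w+b_w)$ has the same sign as $1-\left(\tfrac{1}{a_w}+\tfrac{1}{b_w}\right)$. Hence, if $\tfrac{1}{a_w}+\tfrac{1}{b_w}=1$ then $a_w b_w=a_w+b_w$, and since the centers also coincide, $\ol c=\ol d$, which is (1). If $\tfrac{1}{a_w}+\tfrac{1}{b_w}<1$ then $a_w+b_w<a_w b_w$, so the radius of $\ol d$ is strictly smaller than that of $\ol c$; by the observation above $\ol d\subset\ol c$, which is (2). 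If $\tfrac{1}{a_w}+\tfrac{1}{b_w}>1$ then $a_w b_w<a_w+b_w$, and symmetrically $\ol c\subset\ol d$, which is (3).

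There is no serious obstacle here: the proof is essentially a one-line radius comparison once one notices that both additions preserve centers. The only point deserving a moment's care is the passage from a weak radius inequality to \emph{proper} set inclusion, which is precisely why I would isolate the same-center inclusion fact at the outset; after that the three cases are mutually exclusive and exhaustive, so the argument is complete.
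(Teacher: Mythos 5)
Your proof is correct and follows essentially the same route as the paper's: both arguments note that the two sums share the center $a_c+b_c$ and reduce the claim to comparing the radii $a_w b_w$ and $a_w+b_w$, which is equivalent to comparing $\tfrac{1}{a_w}+\tfrac{1}{b_w}$ with $1$ after dividing by $a_w b_w>0$. The only difference is that you spell out the same-center inclusion criterion and the cases (2) and (3) explicitly, whereas the paper proves (1) and declares the remaining cases similar.
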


\begin{proof} (1) If $\frac{1}{a_w}+\frac{1}{b_w}=1$, then we have $a_w+b_w=a_w b_w$, which implies
$$\langle a_c+b_c;a_wb_w\rangle=\langle a_c+b_c;a_w+b_w\rangle,$$
that is, $$\ol c=\ol a+\ol b=\ol a\oplus\ol b= \ol d.$$

The proofs of (2) and (3) are similar to that of (1).
\end{proof}

\begin{example} Considering that $\ol a=[-5,-1]$, $\ol b_1=[1,3]$, $\ol b_2=[1,5]$, $\ol b_3=[1,7]$, then
  $$\ol a+\ol b_1=\left[-3,1\right],\quad\ol a+\ol b_2=\left[-4,4\right],\quad \ol a+\ol b_3=\left[-5,7\right], $$
  and
   $$\ol a\oplus\ol b_1=\left[-4,2\right],\quad \ol a\oplus\ol b_2=\left[-4,4\right],\quad \ol a\oplus\ol b_3=\left[-4,6\right].$$
\end{example}

\subsubsection{Scalar Multiplication}
\begin{definition}\label{d21}
For any $\ol a\in \RI$ and $k\in \R$,  we define
\begin{equation}\label{eq3}
k\cdot\ol  a=k \ol a=\langle k  a_c;  a_w^k\rangle.
\end{equation}
\end{definition}

\begin{lemma}
 For any $\ol a\in \RI$ and $k\in\R$,  there exists a unique $\ol c= [  c_l,  c_r]$ in $\RI$ such that $\ol c=k \ol a $, and
\begin{align*} %\label{eq21}
c_l= ka_c-a_w^k,\qquad
c_r= ka_c+a_w^k.
\end{align*}
\end{lemma}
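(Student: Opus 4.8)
The plan is to proceed exactly as in the proof of the corresponding lemma for addition: unpack the definition of scalar multiplication in center--radius form, convert back to endpoint form using \eqref{eq1}, and read off $c_l$ and $c_r$. Concretely, for $\ol a=\langle a_c;a_w\rangle$ with $a_c=\tfrac{a_l+a_r}{2}$ and $a_w=\tfrac{a_r-a_l}{2}>0$, Definition \ref{d21} gives $k\ol a=\langle ka_c;a_w^k\rangle$. One then observes that a pair $\langle c_c;c_w\rangle$ with $c_c\in\R$ and $c_w>0$ corresponds to the interval $[c_c-c_w,\,c_c+c_w]$, so that
\[
k\ol a=\langle ka_c;a_w^k\rangle=\bigl[\,ka_c-a_w^k,\ ka_c+a_w^k\,\bigr],
\]
which is precisely the claimed formula with $c_l=ka_c-a_w^k$ and $c_r=ka_c+a_w^k$; substituting $a_c=\tfrac{a_l+a_r}{2}$ makes the dependence on the original endpoints explicit if one wishes to record it in that form.

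The second thing to verify is that this really defines an element of $\RI$, i.e.\ that $c_l<c_r$. This is immediate: since $a_w>0$ we have $a_w^k>0$ for every real exponent $k$, hence $c_r-c_l=2a_w^k>0$. (Here it is essential that the paper excludes degenerate intervals, so $a_w$ is strictly positive and the power $a_w^k$ is well defined and positive for all $k\in\R$, including negative and non-integer $k$.) Uniqueness of $\ol c$ then follows from the earlier Proposition characterizing equality of interval numbers: an interval number is determined by its center and radius (equivalently by its two endpoints), and $k\ol a$ has a single well-defined center $ka_c$ and radius $a_w^k$, so any $\ol c$ satisfying $\ol c=k\ol a$ must have exactly these endpoints.

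I do not anticipate any genuine obstacle here; the statement is essentially a restatement of Definition \ref{d21} in endpoint coordinates together with the trivial well-definedness check $a_w^k>0$. The only point that deserves a word of care — and the one I would be sure to flag — is the use of an arbitrary real power $a_w^k$: this is legitimate precisely because $a_w>0$ by the standing convention that interval numbers are non-degenerate, so no case distinction on the sign of $k$ (as would be forced by the classical scalar multiplication $k\odot\ol a$) is needed. This absence of case-splitting is, in fact, one of the structural advantages the new operations are meant to exhibit, so it is worth making explicit in the proof.
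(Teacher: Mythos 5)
Your proposal is correct and follows essentially the same route as the paper, which simply notes that the conclusion "can be directly gotten from Definition \ref{d21}"; you just make explicit the conversion from $\langle ka_c;a_w^k\rangle$ to endpoint form and the check that $a_w^k>0$ guarantees $c_l<c_r$. The extra care about well-definedness of the real power $a_w^k$ (valid because $a_w>0$ by the non-degeneracy convention) is a worthwhile addition but does not change the argument.
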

 \begin{proof}
  This conclusion can be directly gotten from Definition \ref{d21}.
 \end{proof}
\begin{proposition}\label{p2}   For any  $\ol a,\ol b\in \RI $ and $k,l\in\R$,  then
\begin{enumerate}[(1)]
  \item     $ 1\ol a=\ol a;$

 \item $k(\ol a+\ol b)=k\ol a+k\ol b;$

  \item $(k+l)\ol a=k\ol a+l\ol a;$

  \item $(kl)\ol a=k(l\ol a).$

\item $\ol a\leq\ol  b$ implies $k\ol a\leq k\ol b$ if $k\geq 0$, or $k\ol a\geq k\ol b$ if $k< 0$.
\end{enumerate}

\end{proposition}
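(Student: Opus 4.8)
The plan is to reduce every claim to an elementary identity or inequality about the center and radius coordinates, exploiting that under the notation $\ol a=\langle a_c;a_w\rangle$ the map $\ol a\mapsto(a_c,\ln a_w)$ carries the operations of Definitions \ref{d2} and \ref{d21} to ordinary vector-space operations on $\R\times\R$: addition adds centers and multiplies radii, while scaling by $k$ multiplies the center by $k$ and raises the radius to the power $k$. Thus items (1)--(4) become, in the second coordinate, the familiar laws written multiplicatively, namely $a_w^{1}=a_w$, $(a_wb_w)^{k}=a_w^{k}b_w^{k}$, $a_w^{k+l}=a_w^{k}a_w^{l}$, and $(a_w^{l})^{k}=a_w^{kl}$, and in the first coordinate the corresponding identities for real numbers. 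Throughout one uses only that a radius is strictly positive, so that $a_w^{k}$ is well defined and positive.

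Concretely, for (1) I would compute $1\ol a=\langle 1\cdot a_c;a_w^{1}\rangle=\langle a_c;a_w\rangle=\ol a$. For (2), I would expand both sides by Definition \ref{d21} and Definition \ref{d2}: $k(\ol a+\ol b)=k\langle a_c+b_c;a_wb_w\rangle=\langle k(a_c+b_c);(a_wb_w)^{k}\rangle$, while $k\ol a+k\ol b=\langle ka_c;a_w^{k}\rangle+\langle kb_c;b_w^{k}\rangle=\langle ka_c+kb_c;a_w^{k}b_w^{k}\rangle$, and the exponent law $(a_wb_w)^{k}=a_w^{k}b_w^{k}$ gives equality. Items (3) and (4) are handled identically: $(k+l)\ol a=\langle(k+l)a_c;a_w^{k+l}\rangle=\langle ka_c+la_c;a_w^{k}a_w^{l}\rangle=k\ol a+l\ol a$, and $(kl)\ol a=\langle kla_c;a_w^{kl}\rangle=\langle k(la_c);(a_w^{l})^{k}\rangle=k(l\ol a)$. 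Each of these is a single short display.

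For (5) I would split on the sign of $k$ and, within each, on the two cases of Definition \ref{TOrder}. If $k=0$, then $0\ol a=\langle0;a_w^{0}\rangle=\langle0;1\rangle=\ol0=0\ol b$, so $k\ol a\le k\ol b$. If $k>0$: when $a_c<b_c$ we get $ka_c<kb_c$, hence $k\ol a<k\ol b$; when $a_c=b_c$ and $a_w\le b_w$ we get $ka_c=kb_c$ and $a_w^{k}\le b_w^{k}$ because $t\mapsto t^{k}$ is nondecreasing on $(0,\infty)$, hence $k\ol a\le k\ol b$. If $k<0$: when $a_c<b_c$ then $ka_c>kb_c$, so $k\ol b<k\ol a$; when $a_c=b_c$ and $a_w\le b_w$ then $ka_c=kb_c$ and $a_w^{k}\ge b_w^{k}$ since $t\mapsto t^{k}$ is nonincreasing on $(0,\infty)$ for $k<0$, so $k\ol b\le k\ol a$; in both subcases $k\ol a\ge k\ol b$.

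I do not expect a genuine obstacle: the statement is a bookkeeping verification. The only place demanding a moment's care is (5), where one must remember that scaling by a negative $k$ reverses the order in both coordinates — multiplication by a negative real reverses the center, and the power map $t\mapsto t^{k}$ is decreasing on $(0,\infty)$ for $k<0$ — and that $k=0$ collapses every interval to $\ol0=[-1,1]$; reconciling these with the lexicographic order of Definition \ref{TOrder} is the entire content of the argument.
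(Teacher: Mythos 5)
Your proposal is correct and follows essentially the same route as the paper: every item is verified by direct computation in the center--radius coordinates $\langle a_c;a_w\rangle$, reducing (1)--(4) to the real exponent laws and (5) to a case split on Definition \ref{TOrder}. Your separate treatment of $k=0$ in (5) is in fact slightly more careful than the paper's, whose ``$k\geq 0$'' case asserts $ka_c<kb_c$, which degenerates at $k=0$ even though the conclusion $0\ol a=\ol 0=0\ol b$ still holds.
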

\begin{proof}

(1) $1\ol a=\langle 1    a_c;   a_w^1\rangle=\langle      a_c;   a_w \rangle=\ol a$.

(2) Since
$$k(\ol a+\ol b)=\langle k    (a_c+b_c);   (a_wb_w)^k\rangle$$
and
$$k\ol a+k\ol b=\langle k  a_c;    a_w^k\rangle+\langle k  b_c;    b_w^k\rangle=\langle k  a_c+kb_c;    a_w^kb_w^k\rangle, $$
then
$$k(\ol a+\ol b)=k\ol a+k\ol b.$$

(3) Since
$$(k+l)\ol a=\langle(k+l)  a_c;  a_w^{k+l }\rangle,$$
and
$$k\ol a+l\ol a=\langle k  a_c;  a_w^k\rangle+\langle  l  a_c;  a_w^l\rangle=\langle(k+l)  a_c;  a_w^{k+l}\rangle,$$
then
$$(k+l)\ol a=k\ol a+l\ol a.$$

(4) Since
$$(kl)\ol a=\langle kl  a_c;  a_w^{kl}\rangle,$$
and
$$k(la)=k\langle l  a_c;   a_w^l\rangle=\langle kl   a_c;    a_w^{kl}\rangle,$$
then
$$(kl)\ol a=k(l\ol a).$$

(5) Since $\ol a\leq\ol b$, according to Definition \ref{TOrder}, we have
$$\left\{\begin{array}{ll}
a_c=b_c,~a_w\leq b_w,~\text{or}\\
a_c<b_c.
\end{array}\right.$$

If $k\geq 0$, then we have
$$\left\{\begin{array}{ll}
ka_c=kb_c,~a_w^k\leq b_w^k,~\text{or}\\
ka_c< kb_c,
\end{array}\right.$$
which implies $k\ol a\leq k\ol b$.

If $k<0$, then we have
$$\left\{\begin{array}{ll}
ka_c=kb_c,~a_w^k\geq b_w^k,~\text{or}\\
ka_c>kb_c,
\end{array}\right.$$
which implies $k\ol a\geq k\ol b$.

Thus, we complete the proof.
\end{proof}

\begin{remark}\label{rem1} By Proposition \ref{p1} (4) and Definition \ref{d21}, when $k=-1$, we have $-\ol a =(-1)\ol  a$. In fact, for any    interval number $\ol a=\langle  a_c;  a_w\rangle$,   one has
$$-\ol a=\langle - a_c;  a_w^{-1}\rangle=-1 \langle  a_c;  a_w\rangle=-1 \ol a.$$
\end{remark}

  \begin{proposition} Let $\ol a_i=\left\langle (a_i)_c;(a_i)_w\right\rangle\in\RI$, $k_i\in\R$, $i=1,2,...,n$. Then
  $$\sum_{i=1}^{n}k_i\ol a_i=\left\langle\sum_{i=1}^{n}k_i  (a_i)_c;\prod_{i=1}^{n}(a_i)_w^{k_i}\right\rangle.$$
  \end{proposition}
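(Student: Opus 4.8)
The plan is to argue by induction on $n$, using Definitions \ref{d2} and \ref{d21} together with the algebraic laws established in Propositions \ref{p1} and \ref{p2}. Before the induction, I would note that the finite sum $\sum_{i=1}^{n} k_i \ol a_i$ is unambiguously defined: by the commutativity and associativity of $+$ (Proposition \ref{p1}(1)--(2)) the value is independent of the order in which the terms $k_i \ol a_i$ are combined, and by Definition \ref{d21} each term $k_i \ol a_i = \langle k_i (a_i)_c; (a_i)_w^{k_i}\rangle$ is a genuine element of $\RI$ since $(a_i)_w > 0$ forces $(a_i)_w^{k_i} > 0$. The same observation shows the claimed right-hand side lies in $\RI$, because $\prod_{i=1}^{n}(a_i)_w^{k_i} > 0$.

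For the base case $n = 1$, the identity is exactly Definition \ref{d21}: $k_1 \ol a_1 = \langle k_1 (a_1)_c; (a_1)_w^{k_1}\rangle$. For the inductive step, assume the formula holds for some $m \geq 1$. Writing
\[
\sum_{i=1}^{m+1} k_i \ol a_i = \left(\sum_{i=1}^{m} k_i \ol a_i\right) + k_{m+1}\ol a_{m+1},
\]
I would substitute the induction hypothesis for the first summand and Definition \ref{d21} for the second, obtaining
\[
\left\langle \sum_{i=1}^{m} k_i (a_i)_c;\ \prod_{i=1}^{m} (a_i)_w^{k_i}\right\rangle + \left\langle k_{m+1}(a_{m+1})_c;\ (a_{m+1})_w^{k_{m+1}}\right\rangle.
\]
Applying the addition rule \eqref{eq2-1} then adds the centers and multiplies the radii, which yields center $\sum_{i=1}^{m+1} k_i (a_i)_c$ and radius $\prod_{i=1}^{m+1} (a_i)_w^{k_i}$, completing the induction.

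There is no serious obstacle here; the statement is a bookkeeping consequence of the new arithmetic, and the only points requiring a word of care are the well-definedness of the unordered finite sum (handled by Proposition \ref{p1}(1)--(2)) and the verification that every interval appearing — both the individual scaled terms and the final answer — has strictly positive radius so that it belongs to $\RI$. If one prefers to avoid induction, the same conclusion follows by iterating Proposition \ref{p2}(2)--(3) and Remark \ref{rem1}, but the inductive write-up is the cleanest.
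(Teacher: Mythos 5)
Your proof is correct. The paper states this proposition without any proof, treating it as an immediate consequence of Definitions \ref{d2} and \ref{d21} (centers add, radii multiply, and $k_i\ol a_i=\langle k_i(a_i)_c;(a_i)_w^{k_i}\rangle$), and your induction is exactly the routine verification the authors left implicit, with the added (correct) care about well-definedness of the unordered sum and positivity of the resulting radius.
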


Recall that,  in \cite{moo66} and the references therein, the scalar multiplication is defined as
\begin{align}\label{eq3-11}
  k\odot\ol a =\begin{cases}
                 [k a_l, ka_r]=\langle ka_c;ka_w\rangle, &\text{ if } k\geq0,\\
                [k a_r, ka_l]=\langle ka_c;-ka_w\rangle, &\text{ if } k<0 .
               \end{cases}
\end{align}

  According to  \eqref{eq3} and \eqref{eq3-11}, the scalar multiplication $\ol c$ and $\ol d$ have the same center, but their radiuses   depend on the  radius  of $\ol a$ and $k$. Moreover, one has
\begin{proposition}\label{prop2.20}
 For any $\ol a\in \RI$, $k\in\R$, Let $\ol c=k\ol a $, and  $\ol d=k\odot\ol a$.
\begin{itemize}
  \item[(1)] If $|k|a_w=a_w^k$, then  $\ol c= \ol d;$
  \item[(2)] If $|k|a_w<a_w^k$, then   $\ol d\subset \ol c;$
  \item[(3)] If $|k|a_w>a_w^k$, then $\ol c\subset \ol d.$
\end{itemize}
\end{proposition}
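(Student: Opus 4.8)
The plan is to mirror the proof of Proposition~\ref{prob2.12}: first rewrite both scalar products in center--radius form, observe that they share a common center, and then reduce the inclusion relations to a single comparison of radii.

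First I would record, from Definition~\ref{d21}, that $\ol c=k\ol a=\langle k a_c;\, a_w^{k}\rangle$, and from \eqref{eq3-11} that $\ol d=k\odot\ol a=\langle k a_c;\, |k| a_w\rangle$; indeed both branches $k\ge 0$ and $k<0$ of \eqref{eq3-11} produce radius $|k| a_w$, and we take $k\neq 0$ since otherwise the classical product degenerates to $\{0\}\notin\RI$. Hence $\ol c$ and $\ol d$ have the same center $k a_c$ and differ only in their radii, which are $a_w^{k}$ and $|k| a_w$ respectively.

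Next I would invoke the elementary fact, immediate from Definition~\ref{defsubseteq}, that for interval numbers with a common center $x$ one has $\langle x;p\rangle\subseteq\langle x;q\rangle$ exactly when $p\le q$, and $\langle x;p\rangle\subset\langle x;q\rangle$ exactly when $p<q$: writing $\langle x;p\rangle=[x-p,x+p]$, the inclusion $[x-p,x+p]\subseteq[x-q,x+q]$ amounts to $-q\le -p$ together with $p\le q$, i.e. just $p\le q$. Applying this with $x=k a_c$ and with $p,q$ the two radii settles all three cases simultaneously: if $|k|a_w=a_w^{k}$ then $\ol c$ and $\ol d$ have equal center and equal radius, hence $\ol c=\ol d$; if $|k|a_w<a_w^{k}$ then $\ol d\subset\ol c$; and if $|k|a_w>a_w^{k}$ then $\ol c\subset\ol d$.

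There is essentially no hard step here; the only points requiring a little care are the bookkeeping of the sign of $k$ in \eqref{eq3-11}, so as to confirm that the classical radius is uniformly $|k|a_w$, and the tacit restriction $k\neq 0$ needed for $\ol d$ to be an interval number in $\RI$.
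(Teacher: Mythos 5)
Your proof is correct and follows essentially the same route as the paper's: both reduce to the observation that $k\ol a$ and $k\odot\ol a$ share the center $ka_c$ and differ only in their radii $a_w^k$ versus $|k|a_w$ (the paper writes out case (1) and declares (2)--(3) similar, which you fill in via the radius comparison for common-center intervals). Your extra remark that $k\neq 0$ is needed for $k\odot\ol a$ to lie in $\RI$ is a worthwhile point of care that the paper glosses over.
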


\begin{proof} (1) If $|k|{a_w} ={a_w}^k$, then we have
$$\langle ka_c ;a_w^k\rangle=\langle ka_c;|k|a_w\rangle=\begin{cases}
  \langle ka_c;ka_w\rangle, &\text{ if } k\geq0,\\
  \langle ka_c;-ka_w\rangle, &\text{ if } k<0,\\
\end{cases}$$
that is, $$\ol c=k\ol a=k\odot\ol a= \ol d.$$

The proofs of (2) and (3) are similar to that of (1).
\end{proof}

\begin{example}\label{exSM} Considering that $\ol a_1=[-2,-1]$,   $\ol a_2=[1,5]$,   $\ol a_3=[1,9]$, then
$$k\ol a_1=\begin{cases}
[-\frac12,\frac72],  &\text{ if } k=-1,\\
[-1,1],  &\text{ if } k=0,\\
[-\frac34-\frac{\sqrt{2}}{2},-\frac34+\frac{\sqrt{2}}{2}],  &\text{ if } k=\frac12,\\
[-2,-1],  &\text{ if } k=1,\\
[-\frac{13}{4},-\frac{11}{4}],  &\text{ if } k=2,\\
\end{cases}\qquad\text{~and~}
 k\odot\ol a_1=\begin{cases}
[1,2], &\text{ if } k=-1,\\
0,  &\text{ if } k=0,\\
[-1,-\frac12],  &\text{ if } k=\frac12,\\
[-2,-1],  &\text{ if } k=1,\\
[-4,-2],  &\text{ if } k=2,\\
\end{cases}
$$
$$k\ol  a_2=\begin{cases}
[-\frac72,-\frac52],  &\text{ if } k=-1,\\
[-1,1],  &\text{ if } k=0,\\
[1,5],  &\text{ if } k=1,\\
[\frac{9}{2}-2\sqrt{2},\frac{9}{2}+2\sqrt{2}],  &\text{ if } k=1.5,\\
[2,10],  &\text{ if } k=2,\\
[1,17],  &\text{ if } k=3,\\
\end{cases}\qquad\text{~and~}
 k\odot\ol a_2=\begin{cases}
[-5,-1], &\text{ if } k=-1,\\
0,  &\text{ if } k=0,\\
[1,5],  &\text{ if } k=1,\\
[\frac32,\frac{15}{2}],  &\text{ if } k=1.5,\\
[2,10],  &\text{ if } k=2,\\
[3,15],  &\text{ if } k=3,\\
\end{cases}
$$
$$k\ol a_3=\begin{cases}
[-3,-2],  &\text{ if } k=-\frac12,\\
[-1,1],  &\text{ if } k=0,\\
[\frac12,\frac92],  &\text{ if } k=\frac12,\\
[\frac{15}{4}-2\sqrt{2},\frac{15}{4}+2\sqrt{2}],  &\text{ if } k=\frac34,\\
[1,9],  &\text{ if } k=1,\\
[-6,26],  &\text{ if } k=2,\\
\end{cases}\qquad\text{~and~}
 k\odot \ol a_3=\begin{cases}
[-\frac92,-\frac12],  &\text{ if } k=-\frac12,\\
0,  &\text{ if } k=0,\\
[\frac12,\frac92],  &\text{ if } k=\frac12,\\
[\frac34,\frac{27}{4}],  &\text{ if } k=\frac34,\\
[1,9],  &\text{ if } k=1,\\
[2,18],  &\text{ if } k=2.\\
\end{cases}
$$
\end{example}

\subsubsection{Subtraction}
\begin{definition}\label{subt}For any $\ol a,~\ol b\in \RI$,  we define
\begin{align}\label{eq4-110}
  \ol a-\ol b= \left\langle a_c-b_c; \frac{a_w}{b_w}\right\rangle=\left[a_c-b_c-\frac{a_w}{b_w},a_c-b_c+\frac{a_w}{b_w}\right].
\end{align}
\end{definition}
It follows from Definition \ref{subt} that for any   $\ol a\in \RI$, $$\ol a-\ol a=\left\langle 0; 1\right\rangle=\ol 0.$$

By Definition \ref{subt} and Remark \ref{rem1}, the following statement holds.
\begin{lemma}
  For any $\ol a,~\ol b\in \RI$,  we have
   $$\ol a-\ol b= \ol a+(-\ol b)=\ol a+(-1\ol b).$$
\end{lemma}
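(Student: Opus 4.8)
The plan is to verify the identity by unwinding all three definitions in the $\langle \text{center};\text{radius}\rangle$ notation and checking that the centers agree and the radii agree. This is a short computation, but I would structure it in two clean steps so that the role of Remark~\ref{rem1} is visible.

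\medskip

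First I would compute the right-hand side. By Remark~\ref{rem1}, $-1\,\ol b = -\ol b$, so the two expressions $\ol a + (-\ol b)$ and $\ol a + (-1\,\ol b)$ are literally the same object, and it suffices to evaluate $\ol a + (-\ol b)$. By Proposition~\ref{p1}(4), $-\ol b = \langle -b_c; \tfrac{1}{b_w}\rangle$. Then Definition~\ref{d2} for addition gives
\begin{equation*}
\ol a + (-\ol b) = \langle a_c; a_w\rangle + \left\langle -b_c; \tfrac{1}{b_w}\right\rangle = \left\langle a_c + (-b_c);\ a_w\cdot \tfrac{1}{b_w}\right\rangle = \left\langle a_c - b_c;\ \tfrac{a_w}{b_w}\right\rangle.
\end{equation*}
This is exactly $\ol a - \ol b$ as given by Definition~\ref{subt}, so the chain of equalities $\ol a - \ol b = \ol a + (-\ol b) = \ol a + (-1\,\ol b)$ follows.

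\medskip

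I do not anticipate any real obstacle here: the statement is essentially a consistency check that the subtraction of Definition~\ref{subt} coincides with ``adding the negative,'' and the only subtle point is to make sure that the negative element produced by Proposition~\ref{p1}(4) has radius $1/b_w$ (not $-b_w$ or $1/b_w$ with a sign issue), so that $a_w \cdot (1/b_w) = a_w/b_w$ matches the radius in \eqref{eq4-110}. One should also note in passing that the result lies in $\RI$ because $a_w/b_w > 0$, so the operation is well-defined — though this is already implicit in Definition~\ref{subt} itself. The proof is therefore a one-line substitution once Proposition~\ref{p1}(4), Definition~\ref{d2}, and Remark~\ref{rem1} are invoked in sequence.
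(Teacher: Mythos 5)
Your proof is correct and follows exactly the route the paper intends: the paper states this lemma without a written proof, merely citing Definition~\ref{subt} and Remark~\ref{rem1}, and your computation $\ol a + (-\ol b) = \langle a_c - b_c; a_w/b_w\rangle = \ol a - \ol b$ via Proposition~\ref{p1}(4) and Definition~\ref{d2} is precisely the verification being left to the reader. No gaps.
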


\begin{theorem}\label{thm331}
  Let $\ol a,~\ol b,~\ol c\in \RI$. If $\ol a+\ol b=\ol c$, then $ \ol a=\ol c-\ol b$.
\end{theorem}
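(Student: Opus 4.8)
The plan is to verify the claim by a direct computation in the $\langle\text{center};\text{radius}\rangle$ representation, exploiting the fact that every operation introduced so far acts componentwise: the centers add and subtract as ordinary reals, while the radii multiply and divide as positive reals. Writing $\ol a=\langle a_c;a_w\rangle$, $\ol b=\langle b_c;b_w\rangle$, $\ol c=\langle c_c;c_w\rangle$, the hypothesis $\ol a+\ol b=\ol c$ means, by Definition \ref{d2}, that $a_c+b_c=c_c$ and $a_w b_w=c_w$.

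From these two scalar identities I would immediately read off $a_c=c_c-b_c$ and, since all radii are strictly positive, $a_w=c_w/b_w$. By Definition \ref{subt}, the right-hand side $\ol c-\ol b$ equals $\langle c_c-b_c;\,c_w/b_w\rangle$, so the two interval numbers $\ol a$ and $\ol c-\ol b$ have the same center and the same radius; by the uniqueness of the center/radius representation (noted after \eqref{eq1}, and formalized in the Proposition on equivalent characterizations of equality), this gives $\ol a=\ol c-\ol b$.

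Alternatively, and perhaps more in the spirit of the algebraic development, I could give a representation-free argument: add $-\ol b$ to both sides of $\ol a+\ol b=\ol c$ and use associativity and commutativity (Proposition \ref{p1}(1),(2)) together with the facts $\ol b+(-\ol b)=\ol 0$ (Proposition \ref{p1}(4)) and $\ol a+\ol 0=\ol a$ (Proposition \ref{p1}(3)), concluding via the Lemma identifying $\ol c-\ol b$ with $\ol c+(-\ol b)$. This is the cleaner route and makes the theorem an instance of cancellation in an abelian group.

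There is essentially no obstacle here: the only thing to be careful about is that $b_w>0$ so that division by $b_w$ is legitimate and the resulting radius is again positive, which is exactly why $\ol c-\ol b$ is a bona fide element of $\RI$; this is guaranteed by the standing assumption $a_w,b_w,c_w>0$ built into the definition of $\RI$. I would present the group-theoretic argument as the main proof and perhaps remark that it reflects the fact, to be exploited throughout the paper, that $(\RI,+)$ is a genuine abelian group.
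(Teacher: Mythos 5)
Your first argument---reading off $a_c=c_c-b_c$ and $a_w=c_w/b_w$ from the componentwise form of the hypothesis and matching this against Definition \ref{subt}---is exactly the proof given in the paper, and it is correct. The group-theoretic alternative you prefer (adding $-\ol b$ to both sides and invoking Proposition \ref{p1}(1)--(4) together with the lemma that $\ol a-\ol b=\ol a+(-\ol b)$) is also fully supported by results already established in the paper and is a legitimate, slightly more structural packaging of the same fact; it buys a proof that would survive any change of the underlying coordinate representation, at the cost of being less self-contained than the two-line computation the paper opts for. Either version is fine; there is no gap.
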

\begin{proof}
  Since $\ol a+\ol b=\ol c$, we have
  $$c_c=a_c+b_c,\qquad c_w=a_wb_w.$$
  Thus
  $$a_c=c_c-b_c,\qquad a_w=\frac{c_w}{b_w}.$$
That is,
$$\ol a=\ol c-\ol b.$$
This ends the proof.
\end{proof}

\begin{remark}
  Theorem \ref{thm331} implies that the  subtraction defined in Definition \ref{subt} is the inverse arithmetic   of the addition given Definition \ref{d2}.
\end{remark}

\begin{proposition} For any $\ol a,~\ol b,~\ol c\in \RI$,  one has
\begin{itemize}
  \item[(1)] $\ol a+(\ol b-\ol c)=\ol a+\ol b-\ol c=\ol a-\ol c+\ol b=\ol a-(\ol c-\ol b)$;
  \item[(2)] $\ol a-(\ol b+\ol c)=\ol a- \ol b-\ol c;$
  \item[(3)] $k(\ol a-\ol b)=k \ol a-k\ol b,$ for any $k\in\R$;
  \item[(4)] $(k-l)\ol a=k \ol a-l\ol a,$ for any $k,l\in\R$.
\end{itemize}

\end{proposition}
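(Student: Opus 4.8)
The plan is to verify each of the four identities by passing to the center--radius representation $\ol a=\langle a_c;a_w\rangle$ and applying Definitions \ref{d2}, \ref{d21}, and \ref{subt}. Under these definitions a sum adds centers and multiplies radii, a scalar multiple scales the center by $k$ and raises the radius to the power $k$, and a difference subtracts centers and divides radii. Since centers live in the additive group $(\R,+)$ and radii live in the multiplicative group $(\R_{>0},\cdot)$, each assertion in the statement splits into a pair of elementary identities: one additive identity for the centers and one multiplicative identity for the radii. Along the way every intermediate object is automatically a genuine element of $\RI$, because $\R_{>0}$ is closed under products, quotients, and real powers.

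For (1), I would compute the center and radius of each of the four expressions. Each one has center $a_c+b_c-c_c$ — using associativity and commutativity of real addition together with $b_c-c_c=-(c_c-b_c)$ — and radius $a_w b_w/c_w$ — using associativity of multiplication on $\R_{>0}$ together with $b_w/c_w=(c_w/b_w)^{-1}$. Hence all four expressions coincide. For (2), both sides have center $a_c-b_c-c_c$ and radius $a_w/(b_w c_w)$, so they are equal.

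For (3), by Definitions \ref{subt}, \ref{d21}, \ref{d2},
\[
k(\ol a-\ol b)=\left\langle k(a_c-b_c);\left(\tfrac{a_w}{b_w}\right)^{k}\right\rangle
=\left\langle ka_c-kb_c;\tfrac{a_w^{k}}{b_w^{k}}\right\rangle=k\ol a-k\ol b,
\]
where the middle equality is distributivity on centers and the identity $(x/y)^k=x^k/y^k$ on positive reals. For (4), similarly $(k-l)\ol a=\langle (k-l)a_c;a_w^{k-l}\rangle=\langle ka_c-la_c;a_w^{k}/a_w^{l}\rangle=k\ol a-l\ol a$, using $x^{k-l}=x^k/x^l$ for $x>0$.

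There is essentially no obstacle: the argument is routine bookkeeping, and the only point meriting a moment's attention is well-definedness, i.e.\ checking that every intermediate quantity is a legitimate element of $\RI$, which is immediate from closure of $\R_{>0}$ under the relevant operations. As an alternative I could bypass the component computations and derive (1)--(4) purely from the structure already established in Propositions \ref{p1} and \ref{p2} and Theorem \ref{thm331} — for instance writing $\ol a-(\ol c-\ol b)=\ol a+\bigl(-(\ol c-\ol b)\bigr)$ and identifying $-(\ol c-\ol b)=\ol b-\ol c$ from the explicit formula for the negative element — but I would present the direct calculation as the primary proof since it is shortest and self-contained.
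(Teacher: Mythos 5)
Your proof is correct and follows essentially the same route as the paper: both verify each identity by reducing to the center--radius representation and checking the resulting additive identity on centers and multiplicative identity on radii. The extra remarks on well-definedness and the alternative derivation from Propositions \ref{p1}, \ref{p2} and Theorem \ref{thm331} are sound but not needed.
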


\begin{proof}
  (1) For any  $\ol a,~\ol b,~\ol c\in \RI$, one has
  $$\ol a+(\ol b-\ol c)=\left\langle a_c; {a_w}\right\rangle+\left\langle b_c-c_c; \frac{b_w}{c_w}\right\rangle=\left\langle a_c+b_c-c_c; \frac{a_wb_w}{c_w}\right\rangle,$$
   $$\ol a+\ol b-\ol c=\left\langle a_c+b_c; {a_w}{b_w}\right\rangle-\left\langle c_c; {c_w}\right\rangle=\left\langle a_c+b_c-c_c; \frac{a_wb_w}{c_w}\right\rangle,$$
    $$\ol a-\ol c+\ol b=\left\langle a_c-c_c; \frac{a_w}{c_w}\right\rangle+\left\langle b_c; {b_w}\right\rangle=\left\langle a_c+b_c-c_c; \frac{a_wb_w}{c_w}\right\rangle,$$
     $$\ol a-(\ol c-\ol b)=\left\langle a_c; {a_w}\right\rangle-\left\langle c_c-b_c; \frac{c_w}{b_w}\right\rangle=\left\langle a_c+b_c-c_c; \frac{a_wb_w}{c_w}\right\rangle.$$
     Hence,
  $$\ol a+(\ol b-\ol c)=\ol a+\ol b-\ol c=\ol a-\ol c+\ol b=\ol a-(\ol c-\ol b).$$
 (2)  For any  $\ol a,~\ol b,~\ol c\in \RI$, one has
 $$\ol a-(\ol b+\ol c)=\left\langle a_c; {a_w}\right\rangle+\left\langle b_c+c_c; {b_w}{c_w}\right\rangle=\left\langle a_c-b_c-c_c; \frac{a_w}{b_wc_w}\right\rangle,$$
 $$\ol a- \ol b-\ol c=\left\langle a_c-b_c; \frac{a_w}{b_w}\right\rangle-\left\langle c_c; {c_w}\right\rangle=\left\langle a_c-b_c-c_c; \frac{a_w}{b_wc_w}\right\rangle.$$
  Hence, $$\ol a-(\ol b+\ol c)=\ol a- \ol b-\ol c.$$
  (3)  For any  $\ol a,~\ol b\in \RI$, $k\in \R$,  we have
  $$k(\ol a-\ol b)=k\left\langle a_c-b_c; \frac{a_w}{b_w}\right\rangle=\left\langle k(a_c-b_c); \left(\frac{a_w}{b_w}\right)^k\right\rangle$$
  $$k \ol a-k\ol b=\left\langle ka_c ;  {a_w}^k\right\rangle-\left\langle kb_c ;  {b_w}^k\right\rangle=\left\langle ka_c-kb_c; \left(\frac{a_w^k}{b_w^k}\right)\right\rangle,$$
  thus, $$k(\ol a-\ol b)=k \ol a-k\ol b.$$
  (4) For any  $\ol a\in \RI$, $k,~l\in \R$,  we have
  $$(k-l)\ol a=(k-l)\left\langle a_c; {a_w}\right\rangle=\left\langle (k-l)a_c; {a_w}^{k-l}\right\rangle,$$
  $$k \ol a-l\ol a=\left\langle ka_c; {a_w}^k\right\rangle-\left\langle la_c; {a_w}^l\right\rangle=\left\langle ka_c-la_c; \frac{a_w^k}{a_w^l}\right\rangle,$$
  thus, $$(k-l)\ol a=k \ol a-l\ol a.$$

  The proof is therefore completed.
\end{proof}

Recall that, in \cite{moo66,MH67,SB09}, the difference, the $H$-difference and the $gH$-difference of interval numbers are respectively give by
\begin{align}
\ol a\ominus \ol b&=[a_l-b_r,a_r-b_l]=\langle a_c-b_c;a_w+b_w\rangle,\label{eq4-111}\\
\ol a\ominus_H \ol b&=[a_l-b_l,a_r-b_r]=\langle a_c-b_c;a_w-b_w\rangle,\label{eq4-112}\\
\ol a\ominus_{gH} \ol b&=[\min\{a_l-b_l,a_r-b_r\},\max\{a_l-b_l,a_r-b_r\}]=\langle a_c-b_c;|a_w-b_w|\rangle.\label{eq4-113}
\end{align}

  According to  \eqref{eq4-110} -- \eqref{eq4-113}, the four differences have the same center, but their radiuses   depend on the  radius  of $\ol a$ and $\ol b$. Moreover, it is easy to prove the following statements.
\begin{proposition}  Let $\ol a,~\ol b,~\ol c,~\ol d,~\ol e\in \RI$ and let  $\ol c=\ol a-\ol b$,
   $\ol d=\ol a\ominus\ol b$, $\ol e=\ol a\ominus_{gH}\ol b$.
  \begin{itemize}
    \item[(1)] If $\frac{a_w}{b_w}\leq |a_w-b_w|$, then $\ol c\subseteq \ol e\subset \ol d;$
    \item[(2)] If $|a_w-b_w|<\frac{a_w}{b_w}< a_w+b_w$, then $\ol e\subset \ol c\subset \ol d;$
    \item[(3)] If $\frac{a_w}{b_w}\geq  a_w+b_w$, then $ \ol e\subset \ol d \subseteq \ol c;$
  \end{itemize}
\end{proposition}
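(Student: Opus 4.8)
The plan is to reduce the whole statement to the ordering of three positive real numbers. As already observed just before the statement, the three differences share a common center: writing $m=a_c-b_c$, we have $\ol c=\langle m;\,a_w/b_w\rangle$, $\ol d=\langle m;\,a_w+b_w\rangle$, and $\ol e=\langle m;\,|a_w-b_w|\rangle$. So the first step is to record the elementary fact that, for two interval numbers with the same center, $\langle m;r\rangle\subseteq\langle m;s\rangle$ holds iff $m-s\le m-r<m+r\le m+s$, i.e.\ iff $r\le s$ (using $r,s>0$), and that the inclusion is proper precisely when $r<s$; this is immediate from Definition~\ref{defsubseteq}. After this reduction the proposition becomes a purely real-number comparison among $c_w=a_w/b_w$, $d_w=a_w+b_w$, and $e_w=|a_w-b_w|$.

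The second step is to isolate the one inequality that needs no hypothesis: since $a_w,b_w>0$ we always have $|a_w-b_w|<a_w+b_w$, hence $e_w<d_w$, i.e.\ $\ol e\subset\ol d$ in every case. This link is the common spine of all three alternatives.

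The third step is to treat the three cases by splicing $c_w=a_w/b_w$ into the chain $e_w<d_w$. In case (1), $a_w/b_w\le|a_w-b_w|$ gives $c_w\le e_w<d_w$, hence $\ol c\subseteq\ol e\subset\ol d$; in case (2), $|a_w-b_w|<a_w/b_w<a_w+b_w$ gives $e_w<c_w<d_w$, hence $\ol e\subset\ol c\subset\ol d$; in case (3), $a_w/b_w\ge a_w+b_w$ gives $e_w<d_w\le c_w$, hence $\ol e\subset\ol d\subseteq\ol c$. Translating each chain of real inequalities back through the common-center equivalence of the first step finishes the proof.

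No step here is genuinely difficult; the only points requiring care are (a) stating the ``$\subseteq$ on intervals of equal center $\iff$ $\le$ on radii'' equivalence explicitly, so that the $\subset$ versus $\subseteq$ symbols in the conclusions are correctly justified — the boundary equalities $a_w/b_w=|a_w-b_w|$ in (1) and $a_w/b_w=a_w+b_w$ in (3) genuinely occur (e.g.\ $a_w=4$, $b_w=2$ gives the former), forcing $\subseteq$ rather than $\subset$ there — and (b) noting that $\ol e\in\RI$ already forces $a_w\ne b_w$ (otherwise the gH-difference degenerates and, under the convention of this paper, is not an interval number), which is exactly what makes the three cases exhaustive for admissible data.
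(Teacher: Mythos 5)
Your proof is correct and follows exactly the route the paper signposts: it notes immediately before the proposition that all the differences share the center $a_c-b_c$, declares the result ``easy to prove,'' and omits the argument, so your reduction to comparing the radii $a_w/b_w$, $a_w+b_w$, $|a_w-b_w|$ via the equal-center characterization of $\subseteq$ is precisely the intended proof. Your two side remarks --- that $\ol e\in\RI$ forces $a_w\neq b_w$, and that the boundary equalities explain the $\subseteq$ versus $\subset$ distinctions --- are accurate and consistent with the paper's Example on the five numerical cases.
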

\begin{example}\label{ex2}
Considering the following cases.

(1) If $\ol a=[-3,-1],~\ol b=[-4,0]$ then $ \frac{a_w}{b_w}=0.5< |a_w-b_w|=1$, and
$$	\ol c=\ol a-\ol b=[	-0.5, 0.5],\quad \ol d=\ol a\ominus\ol b=[-3,3],\quad \ol e=\ol a\ominus_{gH}\ol b=[ -1,1],$$
thus
$$\ol c\subset \ol e\subset \ol d.$$
(2) If $\ol a=[1,9],~\ol b=[-2,2]$ then $ \frac{a_w}{b_w}=2= |a_w-b_w|$, and
 $$	\ol c=[3,7],\quad \ol d=[-1,11],\quad \ol e=[3,7],$$
thus
$$\ol c= \ol e\subset \ol d.$$	
(3) If $\ol a=[-4,0],~\ol b=[-3,-1]$ then $   |a_w-b_w|=1<\frac{a_w}{b_w}=2< a_w+b_w=3$, and
 $$	\ol c=[-2,2],\quad \ol d=[-3,3],\quad \ol e=[-1,1],$$
thus
$$ \ol e\subset\ol c\subset \ol d.$$
(4) If $\ol a=[0.5,5],~\ol b=[0,1.5]$ then $    \frac{a_w}{b_w}=3= a_w+b_w$, and
 $$	\ol c=[-1,5],\quad \ol d=[-1,5],\quad \ol e=[0.5,3.5],$$
thus
$$ \ol e\subset\ol c= \ol d.$$
(5) If $\ol a=[0.5,3.5],~\ol b=[1.5,2.5]$ then $   |a_w-b_w|=1< a_w+b_w=2<\frac{a_w}{b_w}=3$, and
 $$	\ol c=[-3,3],\quad \ol d=[-2,2],\quad \ol e=[-1,1],$$
thus
$$ \ol e\subset    \ol d\subset\ol c.$$

 \end{example}
\subsubsection{Multiplication}

\begin{definition}\label{d2222}
For any $\ol a,\ol b\in \RI$,  we define
\begin{equation} \label{eq2-2}
\ol a\times\ol b=\ol a \ol b=\langle  a_c  b_c;  \ee^{\ln a_w\ln   b_w}\rangle=\left[ a_c  b_c-  \ee^{\ln a_w\ln   b_w}, a_c  b_c+  \ee^{\ln a_w\ln   b_w}\right].
\end{equation}
\end{definition}

\begin{proposition}\label{p11}
 For any  $\ol a,~\ol b,~\ol c\in \RI,$  one has
   \begin{itemize}
          \item[(1)]
   $\ol a\ol b=\ol b\ol a;$

\item[(2)] $\ol a(\ol b\ol c)=(\ol a\ol b)\ol c;$

\item[(3)]  there exists a  unique  {identity  element}  $\ol1 =\langle    1;  \ee\rangle=[1-\ee,1+\ee]\in\RI$ such that $\ol 1\ol a =\ol a;$

\item[(4)]  there exists a  unique $\ol b\in \RI$ such that $\ol a \ol b=\ol 1,$  here $\ol b $ is called the reciprocal of $\ol a$ and denoted by  $\ol b=\ol a^{-1};$

\item[(5)]    $\ol a~\ol c\leq \ol b\ol c$ if and only if  $\ol a\leq \ol b$ and $\ol c\geq \ol 0$  or $\ol a\geq \ol b$ and $\ol c<\ol  0$.
\end{itemize}
\end{proposition}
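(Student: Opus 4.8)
The plan is to observe that the new operations turn $\RI$ into a familiar algebraic object. Introduce $\lambda:\RI\to\R^2$, $\lambda(\ol a):=(a_c,\ln a_w)$; by \eqref{eq1} an interval number is determined by its center and its (strictly positive) radius, so $\lambda$ is a bijection. Under $\lambda$, Definition \ref{d2} becomes $\lambda(\ol a+\ol b)=(a_c+b_c,\ \ln a_w+\ln b_w)$ and Definition \ref{d2222} becomes $\lambda(\ol a\times\ol b)=(a_cb_c,\ \ln a_w\cdot\ln b_w)$; that is, $\lambda$ is a ring isomorphism of $(\RI,+,\times)$ onto the commutative unital ring $\R\times\R$ with coordinatewise operations. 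Claims \textbf{(1)} and \textbf{(2)} are then nothing but commutativity and associativity of multiplication in $\R$; if one prefers to avoid $\lambda$, expand both $\ol a(\ol b\ol c)$ and $(\ol a\ol b)\ol c$ using $\ln\bigl(\ee^{\ln x\ln y}\bigr)=\ln x\ln y$ and read off $\langle a_cb_cc_c;\,\ee^{\ln a_w\ln b_w\ln c_w}\rangle$ on each side.

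For \textbf{(3)} the unit of $\R\times\R$ is $(1,1)$, with $\lambda$-preimage $\ol 1=\langle 1;\ee\rangle=[1-\ee,1+\ee]$; without invoking $\lambda$, put $\ol 1=\langle u;v\rangle$, impose $\ol 1\ol a=\ol a$ for all $\ol a$, and read off $u=1$ from the centers and $\ln v\cdot\ln a_w=\ln a_w$ for all $a_w>0$, hence $v=\ee$, from the radii; the derivation itself yields uniqueness. For \textbf{(4)}, in $\R\times\R$ the units are exactly the pairs with both coordinates nonzero, and inversion is coordinatewise; pulling this back, $\ol a$ possesses a (necessarily unique) multiplicative inverse precisely when $a_c\ne0$ and $a_w\ne1$, namely $\ol a^{-1}=\bigl\langle 1/a_c;\,\ee^{1/\ln a_w}\bigr\rangle$, which a direct substitution confirms. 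Accordingly I would attach to (4) the hypotheses $a_c\ne0$ and $a_w\ne1$ (equivalently, $\lambda(\ol a)$ has both coordinates nonzero): they are the exact interval analogue of the fact that $0$ has no reciprocal, with $\ol 0=\langle0;1\rangle$ — the additive identity of Proposition \ref{p1} — corresponding under $\lambda$ to the ring zero $(0,0)$.

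\textbf{(5)} is the step I expect to cause trouble, since it concerns how $\times$ interacts with the order ``$\le$'' of Definition \ref{TOrder}, and that order is \emph{not} the product order carried across by $\lambda$. From Definition \ref{d2222}, $\ol a\ol c=\langle a_cc_c;\,\ee^{\ln a_w\ln c_w}\rangle$ and $\ol b\ol c=\langle b_cc_c;\,\ee^{\ln b_w\ln c_w}\rangle$, so by Definition \ref{TOrder} we have $\ol a\ol c\le\ol b\ol c$ iff either $a_cc_c<b_cc_c$, or $a_cc_c=b_cc_c$ together with $(\ln a_w-\ln b_w)\ln c_w\le0$. The natural plan is to split on $\sign{c_c}$: when $c_c>0$ (equivalently, since then $c_c\ne0$, when $\ol c>\ol 0$) the center comparison reproduces the comparison of $a_c$ with $b_c$, and when $c_c<0$ (equivalently $\ol c<\ol 0$) it reverses it, which gives the two displayed alternatives whenever the product centers differ. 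The obstruction is the residual case $a_cc_c=b_cc_c$ — that is $c_c=0$, or $c_c\ne0$ with $a_c=b_c$ — where the comparison of $\ol a\ol c$ with $\ol b\ol c$ rests on the radii alone, through $t\mapsto\ee^{t\ln c_w}$, which is increasing if $c_w>1$ and decreasing if $c_w<1$; this dichotomy is controlled by the sign of $c_w-1$, not by $\sign{c_c}$ and not by whether $\ol c\ge\ol 0$. Hence the equivalence as printed needs a restriction: for instance $\ol a=[1,9]$, $\ol b=[3,7]$, $\ol c=[\tfrac12,\tfrac32]$ satisfy $\ol a>\ol b$ and $\ol c>\ol 0$, yet $\ol a\ol c<\ol b\ol c$. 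The clean version I would actually prove is: \emph{if $a_c\ne b_c$ and $c_c\ne0$}, then $\ol a\ol c\le\ol b\ol c$ iff ($\ol a\le\ol b$ and $\ol c\ge\ol 0$) or ($\ol a\ge\ol b$ and $\ol c<\ol 0$); the excluded configurations must be treated on their own, the answer there being governed by the sign of $c_w-1$, exactly as the degenerate case of the scalar $c=0$ is treated separately in the corresponding real-number statement.
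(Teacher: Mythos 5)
Your treatment of (1)--(4) is essentially the paper's computation in center/log-radius coordinates, with the added (and clarifying) observation that $\lambda(\ol a)=(a_c,\ln a_w)$ is a ring isomorphism onto $\R\times\R$; the paper does everything by hand but arrives at the same formulas. Your insistence on the hypotheses $a_c\ne0$ and $a_w\ne1$ in (4) is correct and exposes a genuine error in the paper: the proposition asserts a reciprocal for \emph{every} $\ol a\in\RI$, yet the paper's own formula $\ol a^{-1}=\langle 1/a_c;\ee^{1/\ln a_w}\rangle$ is undefined when $a_c=0$ or $a_w=1$, and indeed $a_cb_c=1$ and $\ln a_w\ln b_w=1$ are unsolvable in exactly those cases. (Definition \ref{Division} does impose these restrictions on the divisor, so the omission in (4) is an oversight rather than a different convention.)

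Your diagnosis of (5) is also correct, and the counterexample checks out: with $\ol a=[1,9]=\langle5;4\rangle$, $\ol b=[3,7]=\langle5;2\rangle$, $\ol c=[\tfrac12,\tfrac32]=\langle1;\tfrac12\rangle$ one gets $\ol a\ol c=\langle5;\ee^{-2\ln^2 2}\rangle<\langle5;\ee^{-\ln^2 2}\rangle=\ol b\ol c$ although $\ol a>\ol b$ and $\ol c>\ol 0$, so the stated equivalence fails in both directions. The paper's proof of (5) commits precisely the error you anticipated: in both the necessity and sufficiency arguments it passes from the disjunction ``$a_cc_c=b_cc_c$ and $\ln a_w\ln c_w\le\ln b_w\ln c_w$, or $a_cc_c<b_cc_c$'' to the corresponding disjunction for $\ol a,\ol b$ as if the sign of $c_c$ (equivalently $\ol c\gtrless\ol 0$) controlled the radius comparison, when in fact the inequality $(\ln a_w-\ln b_w)\ln c_w\le0$ is governed by whether $c_w\gtrless1$. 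It also ignores that $c_c=0$ forces $a_cc_c=b_cc_c$ regardless of $a_c,b_c$, so no information about the centers of $\ol a,\ol b$ can be recovered there. Your restricted version (requiring $a_c\ne b_c$ and $c_c\ne0$) is the correct repair, and your proof of it is sound.
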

\begin{proof}(1) $\ol a\ol b=\langle  a_c  b_c; \ee^{\ln a_w\ln   b_w}\rangle=\langle  b_c a_c  ; \ee^{\ln   b_w\ln a_w}\rangle=\ol b \ol a$.

(2) $\ol a(\ol b\ol c)=\langle  a_c; a_w\rangle  \langle b_c  c_c; \ee^{\ln b_w\ln   c_w}\rangle=\langle a_cb_c c_c; \ee^{\ln a_w \ln b_w \ln c_w}\rangle.$

$(\ol a\ol b)\ol c=\langle   a_c  b_c;   \ee^{\ln a_w\ln   b_w}\rangle  \langle   c_c;   c_w\rangle=\langle a_cb_c  c_c; \ee^{\ln a_w \ln b_w \ln c_w}\rangle.$

Hence,  $\ol a(\ol b\ol c)=(\ol a\ol b)\ol c.$

(3) Let $\ol 1=\langle    1_c;  1_w \rangle,$ then we have
$$\ol a \ol 1=\langle  a_c   1_c;  a_w1_w \rangle=\langle  a_c;  a_w \rangle=\ol a.$$
This implies  $1_c=1 $ and $1_w=\ee$, i.e., $\ol1 =\langle    1;  \ee\rangle=[1-\ee,1+\ee].$
Hence, $\ol1 =[1-\ee,1+\ee]  $ is the identity element in $\RI$, and the uniqueness is obvious.

(4) Let $\ol b=\langle b_c; b_w\rangle$ such that $\ol a \ol b=\ol 1.$ Then, according to Definition \ref{d2222}, we have
$$a_cb_c=1,~~\ee^{\ln a_w\ln b_w}=\ee.$$
Thus, $$\ol b=\left\langle\frac{1}{a_c}; \ee^\frac{1}{\ln a_w}\right\rangle.$$
Hence, for any number $\ol a$, the reciprocal $\frac{~\ol 1~}{\ol a}=\ol a^{-1}=\langle\frac{1}{a_c}; \ee^\frac{1}{\ln a_w}\rangle$, and the uniqueness is obvious.

(5) (Necessity)  Since $\ol a~\ol c\leq\ol  b\ol c $, according to Definition \ref{TOrder}, we have
 $$\left\{\begin{array}{ll}
a_c c_c=b_c c_c,~\ee^{\ln a_w\ln c_w}\leq \ee^{\ln b_w\ln c_w},~\text{or}\\
a_c c_c<b_c c_c,
\end{array}\right.$$
If $\ol c\geq \ol 0$, i.e.,
$$\left\{\begin{array}{ll}
c_c=0,~c_w\geq 1,~\text{or}\\
 c_c>0,
\end{array}\right.$$
then,
$$\left\{\begin{array}{ll}
a_c =b_c ,~ a_w \leq b_w ,~\text{or}\\
a_c  <b_c ,
\end{array}\right.$$
that is,  $\ol a \leq\ol  b  $.

If $\ol c\leq \ol 0$, we can similarly prove that $\ol a \geq\ol  b  $.

(Sufficiency) If  $\ol a \leq\ol  b  $  and $\ol c\geq \ol 0$, i.e.,

$$\left\{\begin{array}{ll}
a_c =b_c ,~ a_w \leq b_w ,~\text{or}\\
a_c  <b_c ,
\end{array}\right.\qquad \text{ and }\left\{\begin{array}{ll}
c_c=0,~c_w\geq 1,~\text{or}\\
 c_c>0,
\end{array}\right.$$
then
  $$\left\{\begin{array}{ll}
a_c c_c=b_c c_c,~\ee^{\ln a_w\ln c_w}\leq \ee^{\ln b_w\ln c_w},~\text{or}\\
a_c c_c<b_c c_c,
\end{array}\right.$$
that is, $\ol a\ol c\leq\ol  b\ol c $.

The other case can be similarly proved.
The proof is therefore completed.
\end{proof}

\begin{proposition}  If  $\ol a,~\ol b,~\ol c\in \RI,$  then
   \begin{itemize}
          \item[(1)]
$(\ol a+\ol b)\ol c=\ol a~\ol c+\ol b\ol c;$

 \item[(2)] $(\ol a-\ol b)\ol c=\ol a~\ol c-\ol b\ol c;$

  \item[(3)] $k(\ol a\ol b)=(k\ol a)\ol b=\ol a(k\ol b),~ k\in \R.$
   \end{itemize}
\end{proposition}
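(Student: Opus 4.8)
The plan is to verify each of the three distributive/compatibility identities by reducing both sides to the $\langle \text{center};\text{radius}\rangle$ form via Definitions~\ref{d2}, \ref{d21}, \ref{subt}, and \ref{d2222}, and then comparing centers and radii componentwise, invoking the uniqueness of the center–radius representation established in Section~2. For part~(1), I would expand the left side: $(\ol a+\ol b)\ol c = \langle a_c+b_c; a_w b_w\rangle \ol c = \langle (a_c+b_c)c_c; \ee^{\ln(a_w b_w)\ln c_w}\rangle$, and the right side: $\ol a\,\ol c + \ol b\,\ol c = \langle a_c c_c; \ee^{\ln a_w\ln c_w}\rangle + \langle b_c c_c; \ee^{\ln b_w\ln c_w}\rangle = \langle a_c c_c + b_c c_c; \ee^{\ln a_w\ln c_w}\cdot \ee^{\ln b_w\ln c_w}\rangle$. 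The centers obviously agree; for the radii, the key observation is $\ln(a_w b_w) = \ln a_w + \ln b_w$, so $\ee^{\ln(a_w b_w)\ln c_w} = \ee^{(\ln a_w + \ln b_w)\ln c_w} = \ee^{\ln a_w\ln c_w}\cdot\ee^{\ln b_w\ln c_w}$, which is exactly the right-side radius.

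For part~(2), since $\ol a - \ol b = \ol a + (-1)\ol b$ (by the Lemma following Definition~\ref{subt}), I would either reuse part~(1) together with Proposition (the one asserting $k(\ol a\ol b) = (k\ol a)\ol b$, i.e. part~(3) of the present statement) applied with $k=-1$, or argue directly: $\ol a-\ol b = \langle a_c - b_c; a_w/b_w\rangle$, so $(\ol a-\ol b)\ol c = \langle (a_c-b_c)c_c; \ee^{\ln(a_w/b_w)\ln c_w}\rangle$, while $\ol a\,\ol c - \ol b\,\ol c = \langle a_c c_c - b_c c_c; \ee^{\ln a_w\ln c_w}/\ee^{\ln b_w\ln c_w}\rangle$. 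Again centers match, and for radii one uses $\ln(a_w/b_w) = \ln a_w - \ln b_w$ to get $\ee^{(\ln a_w - \ln b_w)\ln c_w} = \ee^{\ln a_w\ln c_w}/\ee^{\ln b_w\ln c_w}$. For part~(3), I would compute all three expressions: $k(\ol a\ol b) = k\langle a_c b_c; \ee^{\ln a_w\ln b_w}\rangle = \langle k a_c b_c; (\ee^{\ln a_w\ln b_w})^k\rangle = \langle k a_c b_c; \ee^{k\ln a_w\ln b_w}\rangle$; then $(k\ol a)\ol b = \langle k a_c; a_w^k\rangle \ol b = \langle k a_c b_c; \ee^{\ln(a_w^k)\ln b_w}\rangle = \langle k a_c b_c; \ee^{k\ln a_w\ln b_w}\rangle$; and symmetrically $\ol a(k\ol b) = \langle k a_c b_c; \ee^{\ln a_w\ln(b_w^k)}\rangle = \langle k a_c b_c; \ee^{k\ln a_w\ln b_w}\rangle$. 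All three coincide.

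This is essentially a routine verification, so there is no serious obstacle; the only point requiring mild care is the handling of logarithms of radii, since $a_w>0$ always holds (radii are strictly positive by the standing assumption that interval numbers are non-degenerate), so $\ln a_w$, $\ln b_w$, $\ln c_w$ are all well-defined real numbers and the identities $\ln(xy)=\ln x+\ln y$, $\ln(x/y)=\ln x-\ln y$, $\ln(x^k)=k\ln x$ apply without restriction. I would also note explicitly, when writing $\ee^{\ln a_w \ln b_w}\cdot\ee^{\ln b_w\ln c_w}$ type products, that the exponents add, which is the one place where the multiplicative structure of the radius interacts with the additive structure of the center; flagging this keeps the computation transparent. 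The proof then closes with the remark that uniqueness of the center–radius form (Proposition in Section~2) upgrades the componentwise equality of centers and radii to equality of interval numbers.
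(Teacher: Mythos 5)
Your proposal is correct and follows essentially the same route as the paper: expand both sides into center--radius form via the definitions of $+$, $-$, $\cdot$, and $\times$, and match components using $\ln(a_wb_w)=\ln a_w+\ln b_w$, $\ln(a_w/b_w)=\ln a_w-\ln b_w$, and $\ln(a_w^k)=k\ln a_w$. In fact you supply slightly more detail than the paper, which omits the computation for part (2) as ``similar to (1)''.
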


\begin{proof}

(1) $(\ol a+\ol b)\ol c=\langle  a_c+b_c ; a_wb_w\rangle   \langle  c_c;    c_w\rangle=\langle (a_c+b_c) c_c; \ee^{\ln (a_w  b_w) \ln c_w}\rangle.$

$\ol a~\ol c+\ol b\ol c=\langle   a_c  c_c;   \ee^{\ln a_w\ln  c_w}\rangle+  \langle  b_c c_c;  \ee^{\ln b_w\ln   c_w}\rangle=\langle a_cc_c+b_c  c_c; \ee^{\ln a_w\ln c_w+ \ln b_w \ln c_w}\rangle.$

Hence,  $(\ol a+\ol b)\ol c=\ol a~ \ol c+\ol b\ol c.$

(2) The proof is similar to that of (1).

(3)  $k(\ol a\ol b)=k\langle   a_c  b_c;   \ee^{\ln a_w\ln  b_w}\rangle=\left\langle   ka_c  b_c;   \ee^{k\ln a_w\ln  c_w}\right\rangle$;

$(k\ol a)\ol b= \langle  k a_c  ;     a_w^k\rangle\langle  b_c  ;     b_w\rangle=\left\langle   ka_c  b_c;   \ee^{\ln a_w^k\ln  c_w}\right\rangle;$

$\ol a(k\ol b)=\langle   a_c  ;     a_w^k\rangle\langle  kb_c  ;     b_w^k\rangle=\left\langle   ka_c  b_c;   \ee^{\ln a_w\ln  c_w^k}\right\rangle.$

Hence, $k(\ol a\ol b)=(k\ol a)\ol b=\ol a(k\ol b).$

 The proof is therefore completed.
\end{proof}

Recall that the multiplication defined in \cite{moo66} is given as
\begin{align}\label{c-malt}\ol d=\ol a\otimes\ol b=[\min\{a_lb_l,a_lb_r,a_rb_l,a_rb_r\},
\max\{a_lb_l,a_lb_r,a_rb_l,a_rb_r\}].
\end{align}

Form   \eqref{eq2-2} and \eqref{c-malt},
it is easy to see that $\ol a\times\ol b$ and $\ol a\otimes\ol b$ are uncomparable. For example, considering the   cases in Table \ref{table1},

\begin{table}[h]
  \centering
  \begin{tabular}{|c|c|c|c|}\hline
  $\ol a$& $\ol b$& $\ol c=\ol a\ol b$& $\ol d=\ol a\otimes\ol b$\\\hline

$[-10, 5]$&$[ -10, 5]$& $[-51.71, 64.21]$&$[-50, 100]$\\\hline
$[-2, -1]$&[1, 2]&$[-3.87, -0.63]$&$[-4, -1]$\\\hline
[0, 1] &[0, 1] &$[-1.37, 1.87]$&[0, 1]\\\hline
[0, 1]&[0, 2]&$[-0.5, 1.5]$&[0, 2]\\\hline
[0, 1]& [0, 4] & [0.38, 1.62]&[0, 4]\\\hline
  \end{tabular}
  \caption{Different cases for the new multiplication \eqref{eq2-2} and the classical one \eqref{c-malt}.}\label{table1}
\end{table}

Now we give the definition of the $n$th power of interval numbers.
\begin{definition}Let $\ol a\in  \RI$,    we then define the $n$th power of $\ol a$ by
\begin{equation} \label{eq2-3}
\ol a^n=\ol a\times \ol a\times...\times \ol a=\left\langle    a_c^n;   \ee^{\ln^n a_w }\right\rangle,~~~  n=0,1,2,....
\end{equation}
Especially, $\ol a^0=\ol 1$.
\end{definition}

\begin{proposition}  If  $\ol a,~\ol b\in \RI,$ $k\in \R$, $n,l\in \N_+$  then
   \begin{itemize}
     \item[(1)]
   $ \ol a ^n \ol a ^l=\ol a^{n+l};$

\item[(2)]   $(k\ol a)^n=k^n\ol a^n;$

\item[(3)] $(\ol a\ol b)^n=\ol a^n \ol b^n;$

\item[(4)] $(\ol a+\ol b)^n=\sum\limits_{i=1}^{n} C_n^i~  \ol a^i~ \ol b^{n-i}.$
    \end{itemize}
\end{proposition}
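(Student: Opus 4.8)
All four identities reduce to verifying the corresponding scalar identities on the center and on $\ln$ of the radius, because by the definitions \eqref{eq2-2} and \eqref{eq2-3} an interval number is completely determined by the pair $(\text{center},\,\ln(\text{radius}))$, and each of our operations acts coordinatewise in this representation: addition adds centers and adds $\ln$-radii; scalar multiplication by $k$ multiplies the center by $k$ and the $\ln$-radius by $k$; multiplication multiplies centers and multiplies $\ln$-radii; the $n$-th power raises the center to the $n$-th power and the $\ln$-radius to the $n$-th power. So I would open the proof by recording this single observation: for $\ol a=\langle a_c;a_w\rangle$ write $\alpha:=\ln a_w$, and note $\ol a$ corresponds to $(a_c,\alpha)\in\R^2$, with the four operations above pulling back to ordinary real arithmetic in each coordinate. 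After that, every claim is an immediate consequence of a familiar real-number identity applied separately in the two coordinates.

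Concretely: for (1), $\ol a^n\ol a^l=\langle a_c^n a_c^l;\ee^{\ln^n a_w\cdot\ln^l a_w}\rangle=\langle a_c^{n+l};\ee^{\ln^{n+l}a_w}\rangle=\ol a^{n+l}$, using $x^nx^l=x^{n+l}$ twice. For (2), $(k\ol a)^n=\langle(ka_c)^n;\ee^{\ln^n(a_w^k)}\rangle=\langle k^n a_c^n;\ee^{k^n\ln^n a_w}\rangle=k^n\langle a_c^n;\ee^{\ln^n a_w}\rangle=k^n\ol a^n$, using $\ln(a_w^k)=k\ln a_w$ and then $(kx)^n=k^nx^n$. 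For (3), $(\ol a\ol b)^n=\langle(a_cb_c)^n;\ee^{\ln^n(\ee^{\ln a_w\ln b_w})}\rangle=\langle a_c^n b_c^n;\ee^{\ln^n a_w\,\ln^n b_w}\rangle=\ol a^n\ol b^n$, using $(xy)^n=x^ny^n$ in each coordinate. Each of these is two lines of routine algebra once the coordinate description is in place, so I would present them compactly rather than belabor them.

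The only item requiring actual thought is (4), the binomial formula, and here I would argue by induction on $n$, using the distributive law $(\ol a+\ol b)\ol c=\ol a\,\ol c+\ol b\,\ol c$ (already proved just above in the excerpt) together with associativity and commutativity of $+$ and $\times$ (Propositions \ref{p1} and \ref{p11}) to run the usual Pascal's-rule computation. The base case is immediate; for the step one writes $(\ol a+\ol b)^{n+1}=(\ol a+\ol b)^n(\ol a+\ol b)$, expands the first factor by the induction hypothesis, distributes, reindexes, and invokes $C_n^{i-1}+C_n^{i}=C_{n+1}^{i}$. I would note, however, that there is a genuine glitch in the statement as written: the sum should run from $i=0$ to $n$, not $i=1$ to $n$ — with the stated range the $\ol b^n$ term is missing and the identity is simply false (e.g. $(\ol a+\ol b)^1=\ol a+\ol b\neq\ol a$). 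So part of the work here is to correct the index range before proving it; with the corrected range $\sum_{i=0}^{n}C_n^i\,\ol a^i\,\ol b^{n-i}$ the induction goes through verbatim.

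**The main obstacle.** There is no deep obstacle. The substantive point is purely bookkeeping: one must be careful that the distributive and associative laws used to justify the binomial expansion are exactly the ones established earlier (in particular that multiplication distributes over addition \emph{and} over subtraction, and that scalar multiplication commutes past $\times$), since the nonstandard definitions mean one cannot take these for granted — but all of them are available from the preceding propositions. The one thing a careful referee would flag is the off-by-one in the summation index in (4), which I would fix as part of the proof.
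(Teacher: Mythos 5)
Your coordinatewise verification of (1)--(3) is exactly what the paper intends (it dismisses these as ``easy to prove''), and your computations check out against Definitions \ref{d21}, \ref{d2222}, and \eqref{eq2-3}. For (4) you diverge from the paper: you propose induction on $n$ via the distributive law and Pascal's rule, whereas the paper argues directly, writing $(\ol a+\ol b)^n=\langle (a_c+b_c)^n;\ee^{\ln^n(a_wb_w)}\rangle$ and then applying the \emph{real} binomial theorem separately to $(a_c+b_c)^n$ and to $(\ln a_w+\ln b_w)^n$ before reassembling the terms as $\sum C_n^i\,\ol a^i\,\ol b^{n-i}$. Both routes are valid; the paper's is shorter because it exploits the fact that the pair $(a_c,\ln a_w)$ reduces everything to two scalar binomial expansions, while yours is more structural --- it uses only the ring-like axioms already established (distributivity, associativity, commutativity, and $(k+l)\ol c=k\ol c+l\ol c$), so it would survive even if the coordinate description were unavailable. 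Your flag on the summation range is a genuine catch: the identity requires $\sum_{i=0}^{n}$, since the $i=0$ term contributes $C_n^0\,\ol a^0\,\ol b^n=\ol 1\cdot\ol b^n=\ol b^n$ (using $\ol a^0=\ol 1$ and that $\ol 1$ is the multiplicative identity), and with the stated range $\sum_{i=1}^{n}$ the case $n=1$ already fails. Notably the paper propagates the same off-by-one through its own proof, writing $(a_c+b_c)^n=\sum_{i=1}^{n}C_n^i a_c^i b_c^{n-i}$, which is false as a real identity; so your correction applies to the source as well as to the statement.
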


\begin{proof}    (1)-(3) are easy to prove, so we only prove (4).

For (4), we have  \begin{align*}
  (\ol a+\ol b)^n=&\langle a_c+b_c; a_w b_w\rangle^n\\
  =&\left\langle (a_c+b_c)^n;\ee^{\ln^n (a_w b_w)}\right\rangle\\
  =&\left\langle \sum\limits_{i=1}^{n} C_n^i~    a_c^i~   b_c^{n-i};\ee^{\sum\limits_{i=1}^{n} C_n^i~   \ln ^i a_w~  \ln^{n-i} b_w}\right\rangle\\
  =&\sum\limits_{i=1}^{n} C_n^i~   \left\langle  a_c^i~   b_c^{n-i};\ee^{   \ln ^i a_w~  \ln^{n-i} b_w}\right\rangle\\
    =&\sum\limits_{i=1}^{n} C_n^i~  \ol a^i~ \ol b^{n-i}.
\end{align*}

The proof is therefore completed.
\end{proof}

\subsubsection{Division}

\begin{definition}\label{Division}
Let $\ol a,\ol b\in \RI$,  if $b_c\neq0$ and $b_w\neq 1 $,  we define
\begin{equation} \label{eq2-4}
\ol a\div\ol b=\frac{~\ol a~}{ \ol b}=\left\langle  \frac{~a_c~}{  b_c};  \ee^{\frac{~\ln a_w~}{\ln   b_w}}\right\rangle=\left[  \frac{~a_c~}{  b_c}-  \ee^{\frac{~\ln a_w~}{\ln   b_w}},\frac{~a_c~}{  b_c}+ \ee^{\frac{~\ln a_w~}{\ln   b_w}}\right].
\end{equation}
\end{definition}

\begin{lemma}\label{p112}
 Let $\ol a,\ol b\in \RI$,  if $b_c\neq0$ and $b_w\neq 1 $, then
\begin{equation} \label{eq221}
\ol a\div\ol b=\ol a { \ol b}^{-1} =\ol a\frac{~\ol 1~}{ \ol b}.
\end{equation}
\end{lemma}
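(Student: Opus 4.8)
The plan is to verify the identity \eqref{eq221} by direct computation, simply unwinding the definitions of division, reciprocal, and multiplication in terms of the $\langle\cdot;\cdot\rangle$ coordinates. First I would recall from Proposition~\ref{p11}(4) that, since $b_c\neq 0$ and $b_w\neq 1$, the reciprocal $\ol b^{-1}$ exists and is given explicitly by $\ol b^{-1}=\left\langle\frac{1}{b_c};\ee^{1/\ln b_w}\right\rangle$, and that $\frac{\ol 1}{\ol b}$ is by Proposition~\ref{p11}(4) just another notation for $\ol b^{-1}$, so the second equality in \eqref{eq221} is purely notational and requires nothing.

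For the substantive equality $\ol a\div\ol b=\ol a\,\ol b^{-1}$, I would compute the right-hand side using the multiplication rule \eqref{eq2-2}:
\begin{align*}
\ol a\,\ol b^{-1}
=\langle a_c;a_w\rangle\left\langle\tfrac{1}{b_c};\ee^{1/\ln b_w}\right\rangle
=\left\langle a_c\cdot\tfrac{1}{b_c};\ \ee^{\ln a_w\,\cdot\,\ln\!\left(\ee^{1/\ln b_w}\right)}\right\rangle
=\left\langle\tfrac{a_c}{b_c};\ \ee^{\ln a_w/\ln b_w}\right\rangle,
\end{align*}
where the key simplification is $\ln\!\left(\ee^{1/\ln b_w}\right)=\tfrac{1}{\ln b_w}$, so the exponent becomes $\ln a_w/\ln b_w$. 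This matches \eqref{eq2-4} exactly, so $\ol a\,\ol b^{-1}=\ol a\div\ol b$. I would close by remarking that the interval endpoints follow automatically from the $\langle\cdot;\cdot\rangle$-to-$[\cdot,\cdot]$ conversion, so no separate endpoint check is needed.

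There is essentially no hard step here; the only point that needs care is making sure the side conditions $b_c\neq 0$ and $b_w\neq 1$ are in force so that the reciprocal $\ol b^{-1}$ is defined (in particular $\ln b_w\neq 0$, which is what makes the exponent $\ln a_w/\ln b_w$ meaningful) — these are exactly the hypotheses of the lemma, so everything is consistent. One could alternatively derive \eqref{eq221} structurally from the fact that $\div$ should be the inverse operation of $\times$ (in the spirit of Theorem~\ref{thm331} for $-$ and $+$), but the direct coordinate computation above is shorter and self-contained.
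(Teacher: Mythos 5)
Your proposal is correct and follows essentially the same route as the paper: it invokes the explicit formula $\ol b^{-1}=\left\langle\frac{1}{b_c};\ee^{1/\ln b_w}\right\rangle$ from Proposition~\ref{p11}(4), multiplies by $\ol a$ using Definition~\ref{d2222}, and matches the result against Definition~\ref{Division}. The only difference is that you spell out the simplification $\ln\left(\ee^{1/\ln b_w}\right)=\frac{1}{\ln b_w}$ and the role of the hypotheses, which the paper leaves implicit.
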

\begin{proof}
  By Definition  \ref{d2222} and Proposition \ref{p11}, we have
  $$\ol a { \ol b}^{-1} =\left\langle {a_c};   a_w \right\rangle\left\langle\frac{1}{b_c}; \ee^\frac{1}{\ln b_w}\right\rangle=\left\langle  \frac{~a_c~}{  b_c};  \ee^{\frac{~\ln a_w~}{\ln   b_w}}\right\rangle.$$
  Thus according to Definition  \ref{Division},  \eqref{eq221} holds.
\end{proof}

Recall that the multiplication defined in \cite{moo66} is given as
\begin{align}\label{c-div}
\ol d=\ol a\oslash\ol b=\left[\min\left\{\frac{a_l}{b_l},\frac{a_l}{b_r},\frac{a_r}{b_l},\frac{a_r}{b_r}\right\},\max\left\{\frac{a_l}{b_l},\frac{a_l}{b_r},\frac{a_r}{b_l},\frac{a_r}{b_r}\right\}\right],
\end{align}
where $0\not\in \ol b$.

According to  \eqref{eq2-4} and \eqref{c-div}, it is easy to see that the divisions  $\ol a\div\ol b$ and $\ol a\oslash\ol b$ are  uncomparable. For example, considering the   cases in Table \ref{table2}, where ``$\setminus$" implies that the division does not exist.

\begin{table}[h]
  \centering
  \begin{tabular}{|c|c|c|c|}\hline
  $\ol a$& $\ol b$& $\ol c=\ol a\div b$& $\ol d=\ol a\oslash\ol b$\\\hline
$[-7,-5]$&$[-10,-2]$&$[0,2]$&$[0.5,3.5]$\\\hline
$[2,4]$&$[1,5]$&$[0,2]$&$[0.4,4]$\\\hline
$[7,9]$&$[-10,-6]$&$[-2,0]$&$[-1.5,-0.7]$\\\hline
$[-6,-4]$&$[-10,0]$&$[0,2]$&$\setminus$\\\hline
$[-10,0]$&$[-6,-4]$&$\setminus$&$[0,2.5]$\\\hline
$[-1,1]$&$[-1,2]$&$[-1,1]$&$\setminus$\\\hline
$[-1,2]$&$[-2,2]$&$\setminus$&$\setminus$\\\hline
  \end{tabular}
  \caption{Different cases for the new  division \eqref{eq2-4} and the classical one \eqref{c-div}. }\label{table2}
\end{table}

\begin{proposition}\label{p111}
 Let $\ol a,~\ol b ,~\ol c\in \RI$ with  $\ol a\ol b=\ol c$, if    $b_c\neq0$ and $b_w\neq 1 $, then
\begin{equation} \label{eq2212}
\ol a=\frac{~\ol c~}{\ol b}.
\end{equation}
\end{proposition}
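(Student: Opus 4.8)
The plan is to mirror the argument used for subtraction in Theorem~\ref{thm331}, exploiting that multiplication acts independently on centers and radii. First I would expand the hypothesis $\ol a\ol b=\ol c$ through Definition~\ref{d2222}: comparing centers and radii gives the two scalar identities
\[
c_c=a_cb_c,\qquad c_w=\ee^{\ln a_w\ln b_w}.
\]
Since $\ol b\in\RI$ we automatically have $b_w>0$, and since $\ol c\in\RI$ we have $c_w>0$, so every logarithm appearing below is well defined.

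Next, using $b_c\neq 0$ I would solve the first identity for $a_c=\dfrac{c_c}{b_c}$. For the radius, the hypothesis $b_w\neq 1$ means $\ln b_w\neq 0$; taking logarithms in the second identity yields $\ln c_w=\ln a_w\,\ln b_w$, hence $\ln a_w=\dfrac{\ln c_w}{\ln b_w}$ and therefore $a_w=\ee^{\,\ln c_w/\ln b_w}$. Comparing these two expressions with Definition~\ref{Division}, the right-hand sides are precisely the center and radius of $\dfrac{\ol c}{\ol b}$, so $\ol a=\dfrac{\ol c}{\ol b}$, as claimed; uniqueness of the center and radius of an interval number (noted just after \eqref{eq1}) makes this identification unambiguous.

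A shorter, purely algebraic route is also worth recording: because $b_c\neq 0$ and $b_w\neq 1$, the reciprocal $\ol b^{-1}$ exists by Proposition~\ref{p11}(4). Multiplying $\ol a\ol b=\ol c$ on the right by $\ol b^{-1}$ and using commutativity and associativity (Proposition~\ref{p11}(1)--(2)) together with $\ol b\ol b^{-1}=\ol 1$ and $\ol 1\ol a=\ol a$ gives $\ol a=\ol c\ol b^{-1}$, which equals $\dfrac{\ol c}{\ol b}$ by Lemma~\ref{p112}. I do not anticipate a genuine obstacle here; the only point requiring care is the bookkeeping of the domain conditions $b_c\neq0$ and $b_w\neq1$, which are exactly what guarantee that $a_c=c_c/b_c$ and $\ol b^{-1}$ (equivalently $\ol c/\ol b$) are meaningful.
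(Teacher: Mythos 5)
Your first argument is exactly the paper's proof: expand $\ol a\ol b=\ol c$ into $c_c=a_cb_c$ and $c_w=\ee^{\ln a_w\ln b_w}$, then use $b_c\neq0$ and $\ln b_w\neq0$ to recover $a_c=c_c/b_c$ and $a_w=\ee^{\ln c_w/\ln b_w}$, which Definition~\ref{Division} identifies as $\ol c/\ol b$. The alternative algebraic route via $\ol b^{-1}$ is also sound (and arguably cleaner), but the main line of reasoning is essentially identical to the paper's.
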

\begin{proof}
 Since $\ol a\ol b=\ol c$, we have
  $$\left\langle {c_c};   c_w \right\rangle =\left\langle {a_c}{b_c};   \ee^{\ln a_w\ln b_w} \right\rangle.$$
  Thus, according to Definition  \ref{Division}
  $$\frac{~\ol c~}{\ol b}=\left\langle  \frac{~c_c~}{  b_c};  \ee^{\frac{~\ln c_w~}{\ln   b_w}}\right\rangle=\left\langle {a_c};   a_w \right\rangle=\ol a.$$
   This ends the proof.
\end{proof}

\begin{remark} From Proposition \ref{p111}, it is easy to see that the division is the inverse of the multiplication, while the classical one is not.

\end{remark}
\subsection{Arithmetic Operations Between Interval Numbers and Real Numbers}

As we mentioned before, in this paper, we do not consider   real numbers as   degenerate interval numbers, so we need to define the arithmetic operations between interval numbers and real numbers.

 First, we define a bijection from real numbers to interval numbers.
\begin{definition}  For any $\lambda\in\R$, we define
$$\ol \lambda:=\lambda\ol 1=\langle \lambda ;\ee^\lambda \rangle=[\lambda -\ee^\lambda,\lambda +\ee^\lambda].$$
\end{definition}

\begin{definition}\label{def3222}  For any $\lambda\in\R$, $\ol a\in\RI$, we define
$$\ol a\star\lambda:=\ol a\star\lambda\ol 1=\ol a\star\ol \lambda,$$$$   \lambda\star\ol a:=\lambda\ol 1\star \ol a=\ol \lambda\star\ol a,$$
where $\star\in \{+,-,\times,\div\}$.
\end{definition}

\begin{remark} In Definition \ref{def3222}, if $\star=\times$, then it is the same as the scale multiplication given in Definition \ref{d21}.
\end{remark}

\begin{example} $\ol a=[-1,3]$, $\lambda =2$, then
\begin{align*}
  \ol a+\lambda&=\left[3-2\ee^2,3+2\ee^2\right]=\lambda+\ol a,\\
  \ol a-\lambda&=\left[-1-2\ee^{-2},-1+ 2\ee^{-2}\right],\\
  \lambda-\ol a&=\left[1-\frac12\ee^{2},1+ \frac12\ee^{2}\right],\\
  \ol a\times\lambda&=[-2,6]=\lambda \ol a,\\
  \ol a\div\lambda&=\left[\frac12-\sqrt{2},\frac12 +\sqrt{2}\right],\\
  \lambda\div\ol a&=\left[2-\ee^{\frac{2}{\ln 2}},2+\ee^{\frac{2}{\ln 2}}\right],
\end{align*}
but
  \begin{align*}
  \ol a\oplus\lambda&=\left[1 ,5 \right]=\lambda\oplus\ol a,\\
  \ol a\ominus\lambda&=\left[-3,1\right]=\ol a\ominus_{gH}\lambda=\ol a\ominus_{H}\lambda,\\
  \lambda\ominus\ol a&=\left[-1,3\right]=\lambda\ominus_{gH}\ol a,\\
  \ol a\otimes\lambda&=[-2,6]=\lambda\otimes\ol a,\\
  \ol a\oslash\lambda&=\left[-\frac12 ,\frac32 \right],\\
  \lambda\oslash\ol a & \text{~~does not exist}.
\end{align*}

\end{example}

\subsection{Distance, Norm, and Inner Product}

This subsection will present new distance, norm, and inner product on $\RI$, and   show that $\RI$  is a Hibert space.

We first show that the space $\RI$ with the operation ``$\cdot$" and ``$+$" defined in Definitions \ref{d2} and \ref{d21}, respectively, is a linear space.

\begin{theorem}
  $\RI$ is a linear space.
\end{theorem}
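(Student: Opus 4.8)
The plan is to verify the eight vector-space axioms directly from the coordinate formulas already established, exploiting the fact that the map $\ol a = \langle a_c; a_w\rangle \mapsto (a_c, \ln a_w) \in \R \times \R$ converts the new operations into ordinary componentwise operations on $\R^2$. Indeed, by Definition \ref{d2} addition sends $(a_c, \ln a_w), (b_c, \ln b_w)$ to $(a_c + b_c, \ln a_w + \ln b_w)$, and by Definition \ref{d21} scalar multiplication sends $(a_c, \ln a_w)$ to $(k a_c, k \ln a_w)$; since $a_w > 0$ ranges over all of $(0,\infty)$, $\ln a_w$ ranges over all of $\R$, so this correspondence is a bijection $\RI \leftrightarrow \R^2$ intertwining the structures.

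First I would record that $\RI$ is closed under $+$ and $\cdot$: by the Lemmas following Definitions \ref{d2} and \ref{d21}, $\ol a + \ol b$ and $k\ol a$ are again interval numbers (their radii $a_w b_w$ and $a_w^k$ are strictly positive). Next I would invoke Proposition \ref{p1}, items (1)--(4), which already give commutativity, associativity, existence of the zero element $\ol 0 = \langle 0; 1\rangle$, and existence of additive inverses $-\ol a = \langle -a_c; a_w^{-1}\rangle$; together these say $(\RI, +)$ is an abelian group. Then I would invoke Proposition \ref{p2}, items (1)--(4), which give the unit law $1\ol a = \ol a$, distributivity over vector addition, distributivity over scalar addition, and compatibility of scalar multiplication $(kl)\ol a = k(l\ol a)$. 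That exhausts the axioms, so $\RI$ is a linear space over $\R$.

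Since every ingredient has already been proven in the cited Propositions, there is no genuine obstacle here; the ``hard part'' is purely organizational — making sure the axiom list is complete and that each one is matched to the correct earlier item. The one point worth an explicit sentence is why additive inverses and the unit radius behave well, namely that the radius coordinate lives in the multiplicative group $(0,\infty)$ which is a genuine group under multiplication (with identity $1$), so that $\ol 0 = [-1,1]$ plays the role of the origin and $a_w \mapsto 1/a_w$ is a legitimate inverse. I would close by remarking (optionally) that the bijection with $\R^2$ described above makes $\RI$ not merely a linear space but a two-dimensional real vector space, which foreshadows the Hilbert-space structure introduced next.

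\begin{proof}
By the Lemma following Definition \ref{d2}, for any $\ol a, \ol b \in \RI$ the sum $\ol a + \ol b = \langle a_c + b_c; a_w b_w\rangle$ is again an interval number, since $a_w b_w > 0$; similarly, by the Lemma following Definition \ref{d21}, for any $k \in \R$ the product $k\ol a = \langle k a_c; a_w^k\rangle$ is an interval number, since $a_w^k > 0$. Hence $\RI$ is closed under the operations ``$+$'' and ``$\cdot$''. By Proposition \ref{p1} (1)--(2), addition is commutative and associative; by Proposition \ref{p1} (3), there is a unique zero element $\ol 0 = \langle 0; 1\rangle = [-1,1]$ with $\ol a + \ol 0 = \ol a$; and by Proposition \ref{p1} (4), every $\ol a \in \RI$ has a unique additive inverse $-\ol a = \langle -a_c; \tfrac{1}{a_w}\rangle$ with $\ol a + (-\ol a) = \ol 0$. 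Therefore $(\RI, +)$ is an abelian group. Moreover, by Proposition \ref{p2} (1), $1 \cdot \ol a = \ol a$; by Proposition \ref{p2} (2), $k(\ol a + \ol b) = k\ol a + k\ol b$; by Proposition \ref{p2} (3), $(k + l)\ol a = k\ol a + l\ol a$; and by Proposition \ref{p2} (4), $(kl)\ol a = k(l\ol a)$, for all $\ol a, \ol b \in \RI$ and $k, l \in \R$. These are precisely the remaining axioms of a real vector space, so $\RI$ is a linear space over $\R$.
\end{proof}
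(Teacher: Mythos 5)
Your proof is correct and takes essentially the same route as the paper, which simply cites Propositions \ref{p1} and \ref{p2}; you merely spell out which items supply which vector-space axioms and add the (accurate but optional) observation that $\ol a \mapsto (a_c,\ln a_w)$ identifies $\RI$ with $\R^2$.
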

\begin{proof}
According to Propositions \ref{p1} and \ref{p2}, it is clear that $\RI$ is a linear space.
\end{proof}

\subsubsection{Distance}

\begin{definition}\label{dist} For any $\ol a,\ol b\in \RI$,  we define
   $$d(\ol a,\ol b)=\sqrt{  (a_c-b_c)^2+(\ln a_w-\ln b_w)^2}.$$
\end{definition}

By Definition \ref{dist}, it is easy to see that
\begin{proposition}\label{p32}
 Assume that $\ol a,\ol b,\ol c\in \RI$,  then we have
   \begin{itemize}
     \item[(1)]  $d(\ol a,\ol b)\geq 0$, and  { $d(\ol a,\ol b)= 0$ \text{implies} $\ol a=\ol b$};
     \item[(2)] $d(k\ol a,k\ol b)=|k| d(\ol a,\ol b)$, $k\in\R$;
     \item[(3)]  $d(\ol a,\ol b)\leq d(\ol a,\ol c)+d(\ol c,\ol b)$.
   \end{itemize}
\end{proposition}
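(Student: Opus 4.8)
The plan is to recognize that the map $\ol a = \langle a_c; a_w\rangle \mapsto (a_c, \ln a_w) \in \R^2$ identifies $d$ with the standard Euclidean metric on $\R^2$ pulled back through this map, so each of the three assertions follows from the corresponding elementary fact about $\R^2$. Accordingly, I would not try to verify the inequalities ``by hand'' on the interval side; instead I would set up this correspondence once and invoke Euclidean geometry.

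For (1), the quantity under the square root is a sum of two squares, hence $d(\ol a, \ol b) \geq 0$ is immediate; and $d(\ol a, \ol b) = 0$ forces $a_c - b_c = 0$ and $\ln a_w - \ln b_w = 0$, so $a_c = b_c$ and $a_w = b_w$ (using that $t \mapsto \ln t$ is injective on $(0,\infty)$ and $a_w, b_w > 0$), which by the Proposition characterizing equality gives $\ol a = \ol b$. For (2), I would compute directly from Definition \ref{d21}: $k\ol a = \langle k a_c; a_w^k\rangle$ and $k\ol b = \langle k b_c; b_w^k\rangle$, so
\begin{align*}
d(k\ol a, k\ol b) &= \sqrt{(k a_c - k b_c)^2 + (\ln a_w^k - \ln b_w^k)^2}\\
&= \sqrt{k^2 (a_c - b_c)^2 + k^2 (\ln a_w - \ln b_w)^2} = |k|\, d(\ol a, \ol b),
\end{align*}
where I used $\ln a_w^k = k \ln a_w$. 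For (3), the triangle inequality, I would write $u = (a_c - b_c, \ln a_w - \ln b_w)$, $v = (a_c - c_c, \ln a_w - \ln c_w)$, $w = (c_c - b_c, \ln c_w - \ln b_w)$ in $\R^2$, observe $u = v + w$, and conclude $d(\ol a,\ol b) = \|u\|_2 \leq \|v\|_2 + \|w\|_2 = d(\ol a,\ol c) + d(\ol c,\ol b)$ by the ordinary triangle inequality for the Euclidean norm (itself a consequence of Cauchy--Schwarz).

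I expect no genuine obstacle here; the only point requiring a word of care is the appeal to injectivity of $\ln$ on the positive reals in part (1) — this is where the standing assumption $a_w > 0$ for every interval number is used, and it is what makes the ``center--log-radius'' coordinates a bijection onto $\R^2$ in the first place. If one wanted to be fully self-contained rather than citing the triangle inequality for $\R^2$, the fallback is to square both sides of the desired inequality in (3) and reduce to Cauchy--Schwarz applied to the vectors $v$ and $w$, but I would prefer simply to state that $d$ is the Euclidean distance transported along the coordinate map and invoke the standard metric-space axioms for $(\R^2, \|\cdot\|_2)$.
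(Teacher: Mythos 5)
Your proposal is correct: the paper itself offers no argument here (it merely states that the three properties follow "by Definition \ref{dist}"), and your identification of $d$ with the Euclidean metric on $\R^2$ under the coordinate map $\langle a_c;a_w\rangle\mapsto(a_c,\ln a_w)$ is exactly the routine verification being left to the reader, with part (2) correctly using $k\ol a=\langle ka_c;a_w^k\rangle$ so that $\ln a_w^k=k\ln a_w$. Nothing is missing.
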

   Thus $d(.,.)$ is a distance on $\RI$.
 Hence, $\RI$ is a  distance space.

\subsubsection{Norm}

\begin{definition}\label{norm} For any $\ol a\in \RI$,  we define
   $$\|\ol  a\|= d(\ol a,\ol 0)=\sqrt{  a_c^2+\ln^2 a_w}.$$
\end{definition}
By Definition \ref{norm}, it is easy to see that
\begin{proposition}\label{p3}
 Assume that $\ol a,\ol b\in \RI$,   then we have
   \begin{itemize}
     \item[(1)]  $\|\ol a\|\geq 0$, and  { $\|\ol a\|= 0$ \text{implies} $\ol a=\ol 0$};
     \item[(2)] $\| k\ol a\|=|k| \| \ol  a\|$, $k\in\R$;
     \item[(3)]  $\|  \ol a+\ol b\|\leq \| \ol  a\|+\|\ol  b\|$.
   \end{itemize}
\end{proposition}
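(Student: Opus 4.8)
The plan is to verify the three norm axioms directly from Definition \ref{norm}, reducing each to an elementary fact about the Euclidean norm on $\R^2$. The underlying observation is that the map $\Phi:\RI\to\R^2$ sending $\ol a=\langle a_c;a_w\rangle$ to $(a_c,\ln a_w)$ is a bijection (since $a_c\in\R$ and $a_w>0$, so $\ln a_w$ ranges over all of $\R$), and that $\|\ol a\|=\|\Phi(\ol a)\|_2$. Under the linear structure of Definitions \ref{d2} and \ref{d21}, this map is in fact linear: $\Phi(\ol a+\ol b)=(a_c+b_c,\ln(a_wb_w))=(a_c+b_c,\ln a_w+\ln b_w)=\Phi(\ol a)+\Phi(\ol b)$ and $\Phi(k\ol a)=(ka_c,\ln a_w^k)=(ka_c,k\ln a_w)=k\Phi(\ol a)$. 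So each axiom transfers from the classical Euclidean norm.

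Concretely, for (1): $\|\ol a\|=\sqrt{a_c^2+\ln^2 a_w}\geq 0$ since it is a square root of a sum of squares, and $\|\ol a\|=0$ forces $a_c^2+\ln^2 a_w=0$, hence $a_c=0$ and $\ln a_w=0$, i.e. $a_c=0$ and $a_w=1$, which by Proposition \ref{p1}(3) means $\ol a=\langle 0;1\rangle=\ol 0$. For (2): using $k\ol a=\langle ka_c;a_w^k\rangle$ from Definition \ref{d21}, we get $\|k\ol a\|=\sqrt{(ka_c)^2+\ln^2 a_w^k}=\sqrt{k^2a_c^2+k^2\ln^2 a_w}=|k|\sqrt{a_c^2+\ln^2 a_w}=|k|\,\|\ol a\|$, where we used $\ln a_w^k=k\ln a_w$. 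For (3): using $\ol a+\ol b=\langle a_c+b_c;a_wb_w\rangle$ from Definition \ref{d2} and $\ln(a_wb_w)=\ln a_w+\ln b_w$, the claim $\|\ol a+\ol b\|\leq\|\ol a\|+\|\ol b\|$ becomes $\sqrt{(a_c+b_c)^2+(\ln a_w+\ln b_w)^2}\leq\sqrt{a_c^2+\ln^2 a_w}+\sqrt{b_c^2+\ln^2 b_w}$, which is exactly the triangle inequality for the Euclidean norm on $\R^2$ applied to the vectors $(a_c,\ln a_w)$ and $(b_c,\ln b_w)$ — equivalently, it follows from Proposition \ref{p32}(3) (already proved for $d$) by taking $\ol c=\ol 0$ after a translation, or simply by invoking Minkowski's inequality for $n=2$.

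Alternatively, and perhaps more cleanly for the write-up, I would deduce all three parts from the already-established properties of the distance $d$ in Proposition \ref{p32} together with the identity $\|\ol a\|=d(\ol a,\ol 0)$. Part (1) is immediate from Proposition \ref{p32}(1). Part (2) follows from $\|k\ol a\|=d(k\ol a,\ol 0)=d(k\ol a,k\ol 0)=|k|d(\ol a,\ol 0)=|k|\,\|\ol a\|$, using $k\ol 0=\ol 0$ (which holds since $\ol 0=\langle 0;1\rangle$ and $k\ol 0=\langle 0;1^k\rangle=\langle 0;1\rangle=\ol 0$) and Proposition \ref{p32}(2). Part (3) requires the translation-invariance of $d$, namely $d(\ol a+\ol c,\ol b+\ol c)=d(\ol a,\ol b)$, which is immediate from the definition since the center difference and the log-radius difference are both unchanged by adding $\ol c$; then $\|\ol a+\ol b\|=d(\ol a+\ol b,\ol 0)=d(\ol a,-\ol b)\leq d(\ol a,\ol 0)+d(\ol 0,-\ol b)=\|\ol a\|+\|-\ol b\|=\|\ol a\|+\|\ol b\|$, where the last equality uses part (2) with $k=-1$.

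There is no real obstacle here — every step is routine. The only point that warrants a line of care is the degenerate-case check in part (1): one must confirm that $a_c=0$ and $\ln a_w=0$ jointly characterize $\ol 0$, which is exactly the content of the uniqueness of the zero element in Proposition \ref{p1}(3). If anything needs emphasis, it is that the logarithm makes the radius coordinate into a genuine real coordinate, so that the nondegeneracy $a_w>0$ never causes trouble and the map to $\R^2$ is onto; this is also what will later make $\RI$ a Hilbert space rather than merely a normed space. I would therefore present the proof in the short form: cite Proposition \ref{p32}, note $\|\ol a\|=d(\ol a,\ol 0)$, and dispatch the three items in two or three lines each.
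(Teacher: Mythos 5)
Your proof is correct and follows exactly the route the paper intends: the paper gives no details, simply asserting that the proposition follows ``by Definition \ref{norm},'' and your argument supplies the missing verification by identifying $\|\ol a\|$ with the Euclidean norm of $(a_c,\ln a_w)\in\R^2$ and checking the three axioms directly (with the triangle inequality being Minkowski's inequality for $n=2$). Both your direct computation and your alternative derivation from Proposition \ref{p32} are sound, so there is nothing to correct.
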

   Thus $\|\cdot\|$ is a norm on $\RI$.

\begin{theorem}\label{thm3.2}
The space $\RI$ endowed with the norm $\|\cdot\|$ is a normed space.
\end{theorem}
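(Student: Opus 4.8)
The plan is to verify directly that the three defining axioms of a normed space are satisfied, namely positive definiteness, absolute homogeneity, and the triangle inequality. Most of the work has in fact already been done: Theorem~\ref{thm3.2} rests on the fact that $\RI$ is a linear space (established just above) together with the three properties collected in Proposition~\ref{p3}. So the proof amounts to assembling these pieces and pointing out that nothing further is needed.

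First I would recall that $\RI$ is a linear space over $\R$ under the operations ``$+$'' and ``$\cdot$'' of Definitions~\ref{d2} and~\ref{d21}. Then I would invoke Proposition~\ref{p3}(1)--(3): for $\ol a,\ol b\in\RI$ and $k\in\R$ we have $\|\ol a\|\geq 0$ with $\|\ol a\|=0$ implying $\ol a=\ol 0$; $\|k\ol a\|=|k|\,\|\ol a\|$; and $\|\ol a+\ol b\|\leq\|\ol a\|+\|\ol b\|$. These are precisely the norm axioms, so $(\RI,\|\cdot\|)$ is by definition a normed linear space. One small remark worth inserting is that the converse of (1) — $\ol a=\ol 0$ implies $\|\ol a\|=0$ — is immediate from $\|\ol 0\|=\sqrt{0^2+\ln^2 1}=0$, so positive definiteness holds in the full ``if and only if'' form.

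There is essentially no obstacle here; the only thing to be careful about is that Proposition~\ref{p3} is stated as an easy consequence of Definition~\ref{norm} without an explicit proof, so if one wants the argument to be genuinely self-contained one should sketch why homogeneity and the triangle inequality hold. Homogeneity follows from $k\ol a=\langle ka_c; a_w^k\rangle$, whence $\|k\ol a\|=\sqrt{k^2a_c^2+\ln^2(a_w^k)}=\sqrt{k^2a_c^2+k^2\ln^2 a_w}=|k|\sqrt{a_c^2+\ln^2 a_w}=|k|\,\|\ol a\|$. The triangle inequality follows from the identification $\ol a\mapsto(a_c,\ln a_w)\in\R^2$, under which ``$+$'' becomes vector addition in $\R^2$ (since $\ln(a_w b_w)=\ln a_w+\ln b_w$) and $\|\cdot\|$ becomes the Euclidean norm; the usual Minkowski inequality on $\R^2$ then gives $\|\ol a+\ol b\|\leq\|\ol a\|+\|\ol b\|$. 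This isometric linear identification with $\R^2$ is the conceptual heart of the whole section and is also what will make the later completeness (Hilbert space) argument painless, so it is worth stating explicitly even though the present theorem only needs the weaker conclusion.

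Thus the proof is short: cite that $\RI$ is a linear space, cite Proposition~\ref{p3} (or re-derive its three items via the map $\ol a\mapsto(a_c,\ln a_w)$), and conclude that all normed-space axioms hold.
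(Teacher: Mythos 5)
Your proposal is correct and follows the same route as the paper, which simply cites Proposition~\ref{p3} (together with the linear-space structure) to conclude that the norm axioms hold. The extra detail you supply --- verifying homogeneity and the triangle inequality via the linear isometric identification $\ol a\mapsto(a_c,\ln a_w)\in\R^2$ --- is a sound and useful elaboration of what the paper leaves implicit, but it does not change the argument.
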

\begin{proof}
According to Proposition \ref{p3}, the theorem can be easily  proved.
\end{proof}

\begin{lemma}\label{law}
  For any $a,b\in \RI$,
   $$2\|\ol a\|+2\|\ol b\|=   \|\ol a+\ol b\|^2+\|\ol a-\ol b\|^2 .$$
\end{lemma}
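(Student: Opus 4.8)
The identity in Lemma \ref{law} is the parallelogram law for the norm $\|\cdot\|$, so the plan is to reduce everything to the corresponding identity in $\R^2$ via the center--log-radius coordinates $\ol a\mapsto(a_c,\ln a_w)$. First I would note that, by Definition \ref{norm}, $\|\ol x\|^2 = x_c^2+\ln^2 x_w$ for any $\ol x\in\RI$; I should remark in passing that the statement as printed is dimensionally off (the left side should read $2\|\ol a\|^2+2\|\ol b\|^2$), and I will prove that corrected form. Next I would compute the two terms on the right. By Definition \ref{d2}, $\ol a+\ol b=\langle a_c+b_c;\,a_wb_w\rangle$, hence
\begin{equation*}
\|\ol a+\ol b\|^2=(a_c+b_c)^2+\ln^2(a_wb_w)=(a_c+b_c)^2+(\ln a_w+\ln b_w)^2.
\end{equation*}
By Definition \ref{subt}, $\ol a-\ol b=\langle a_c-b_c;\,a_w/b_w\rangle$, hence
\begin{equation*}
\|\ol a-\ol b\|^2=(a_c-b_c)^2+\ln^2(a_w/b_w)=(a_c-b_c)^2+(\ln a_w-\ln b_w)^2.
\end{equation*}

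Then I would simply add the two expressions. Using $(p+q)^2+(p-q)^2=2p^2+2q^2$ twice — once with $p=a_c$, $q=b_c$ and once with $p=\ln a_w$, $q=\ln b_w$ — the cross terms cancel and I obtain
\begin{equation*}
\|\ol a+\ol b\|^2+\|\ol a-\ol b\|^2=2a_c^2+2b_c^2+2\ln^2 a_w+2\ln^2 b_w=2\|\ol a\|^2+2\|\ol b\|^2,
\end{equation*}
which is the claim. The whole argument is a two-line computation once the definitions of $+$, $-$, and $\|\cdot\|$ are unpacked; there is essentially no obstacle. The only point that needs a moment of care is making sure the logarithm identities $\ln(a_wb_w)=\ln a_w+\ln b_w$ and $\ln(a_w/b_w)=\ln a_w-\ln b_w$ are legitimate, which they are since $a_w,b_w>0$ for every interval number by definition of $\RI$. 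I would close by observing that, together with the norm axioms already established in Proposition \ref{p3}, the parallelogram law shows $\|\cdot\|$ arises from an inner product, motivating the definition of ``$\diamond$'' in the next subsection.
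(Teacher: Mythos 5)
Your proposal is correct and follows essentially the same route as the paper's own proof: expand $\|\ol a+\ol b\|^2$ and $\|\ol a-\ol b\|^2$ via Definitions \ref{d2}, \ref{subt}, and \ref{norm}, then add and cancel the cross terms. You are also right that the left-hand side should read $2\|\ol a\|^2+2\|\ol b\|^2$; the paper's statement (and the last line of its proof) carries the same missing squares, and your corrected form is the one actually established.
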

\begin{proof} By Definition \ref{norm}, we have
  $$\|\ol a+\ol b\|^2=(a_c+b_c)^2+(\ln a_w+\ln b_w)^2,$$
  and
  $$\|\ol a-\ol b\|^2=(a_c-b_c)^2+(\ln a_w-\ln b_w)^2.$$
  Thus,
  $$ \|\ol a+\ol b\|^2+\|\ol a-\ol b\|^2=2a_c^2+2b_c^2+2\ln^2 a_w+2\ln^2 b_w=2\|\ol a\|+2\|\ol b\|.$$
\end{proof}

Based on the norm, we give the  definition of  the limit in $\RI$.

\begin{definition} Let $\ol a\in \RI$ and  $\{\ol a_n\}_{n=1}^{\infty}\subset \RI$, we say $\ol a$ is the limit of the sequence $\{\ol a_n\}_{n=1}^{\infty}$,
if
$$\|\ol a_n-\ol a\|\to 0, \text{ ~ as ~} n\to \infty,$$
   and denoted by $$\ol a=\lim_{n\to\infty} \ol a_n.$$
\end{definition}

\begin{theorem}\label{thmlim}
 Let $\ol a=\left\langle a_c;a_w\right\rangle$ and   $\ol a_n=\left\langle(a_n)_c;(a_n)_w\right\rangle$, $n=1,2,....$ Then  $\ol a=\lim_{n\to\infty} \ol a_n $ if and only if
 $$  a_c=\lim_{n\to\infty}   (a_n)_c, \text{~~~~ and~~~~} a_w=\lim_{n\to\infty}   (a_n)_w.$$
\end{theorem}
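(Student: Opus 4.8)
The plan is to reduce the claimed equivalence to the elementary fact about convergence in $\R^2$ under the Euclidean metric. Recall that by Definition \ref{norm}, for any $\ol a, \ol b \in \RI$ we have $\|\ol a - \ol b\| = d(\ol a, \ol b) = \sqrt{(a_c - b_c)^2 + (\ln a_w - \ln b_w)^2}$, using that $(\ol a - \ol b) = \langle a_c - b_c; a_w/b_w\rangle$ from Definition \ref{subt} and that $\ln(a_w/b_w) = \ln a_w - \ln b_w$. Thus $\|\ol a_n - \ol a\| = \sqrt{((a_n)_c - a_c)^2 + (\ln (a_n)_w - \ln a_w)^2}$, which is precisely the Euclidean distance in $\R^2$ between the points $((a_n)_c, \ln (a_n)_w)$ and $(a_c, \ln a_w)$.

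First I would establish the forward direction. Suppose $\ol a = \lim_{n\to\infty} \ol a_n$, i.e., $\|\ol a_n - \ol a\| \to 0$. Since $|(a_n)_c - a_c| \le \|\ol a_n - \ol a\|$ and $|\ln (a_n)_w - \ln a_w| \le \|\ol a_n - \ol a\|$, both $(a_n)_c \to a_c$ and $\ln (a_n)_w \to \ln a_w$. Because the logarithm is continuous on $(0,\infty)$ with continuous inverse $\exp$, the convergence $\ln (a_n)_w \to \ln a_w$ is equivalent to $(a_n)_w \to a_w$ (here one should note $(a_n)_w, a_w > 0$ by definition of interval numbers, so the logarithm is well-defined). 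This gives the two scalar limits.

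For the converse, suppose $a_c = \lim_{n\to\infty} (a_n)_c$ and $a_w = \lim_{n\to\infty} (a_n)_w$. Continuity of $\ln$ at $a_w > 0$ yields $\ln (a_n)_w \to \ln a_w$. Then
\[
\|\ol a_n - \ol a\| = \sqrt{((a_n)_c - a_c)^2 + (\ln (a_n)_w - \ln a_w)^2} \to \sqrt{0 + 0} = 0,
\]
using continuity of the map $(s,t) \mapsto \sqrt{s^2 + t^2}$ at the origin. Hence $\ol a = \lim_{n\to\infty} \ol a_n$.

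There is no serious obstacle here; the only point requiring a word of care is the passage between convergence of $(a_n)_w$ and convergence of $\ln(a_n)_w$, which relies on the radii being strictly positive (so that we stay in the domain where $\ln$ and $\exp$ are mutual continuous inverses) — this is guaranteed by the standing assumption $a_l < a_r$ defining $\RI$. I would state this explicitly to keep the argument rigorous, but otherwise the proof is a direct unwinding of the definitions together with the squeeze inequalities for the two coordinates.
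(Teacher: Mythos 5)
Your proof is correct and follows essentially the same route as the paper's: both identify $\|\ol a_n-\ol a\|$ with the Euclidean distance between $((a_n)_c,\ln (a_n)_w)$ and $(a_c,\ln a_w)$ in $\R^2$ and then pass between $\ln (a_n)_w\to\ln a_w$ and $(a_n)_w\to a_w$ via continuity of $\ln$ and $\exp$ on $(0,\infty)$. Your write-up is in fact slightly more careful than the paper's, which dispatches the sufficiency direction with ``obvious'' and leaves the $\ln/\exp$ step implicit.
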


\begin{proof}
(Necessity)  Since $\ol a=\lim_{n\to\infty} \ol a_n $, one has
$$  \|\ol a_n-\ol a\|= \sqrt{  ((a_n)_c-a_c)^2+(\ln (a_n)_w-\ln a_w)^2}\to 0,  \text{ as } n\to\infty,$$
which implies that
$$(a_n)_c\to a_c, \qquad \ln (a_n)_w\to\ln a_w,  \text{ as } n\to\infty,$$
i.e.,
 $$  a_c=\lim_{n\to\infty}   (a_n)_c, \text{~~~~ and~~~~} a_w=\lim_{n\to\infty}   (a_n)_w.$$

(Sufficiency) The sufficiency is obvious according to the necessity.
\end{proof}

\begin{theorem}\label{thm3.3}
$\RI$ is a Banach space endowed with the  norm $\|\cdot\| $.
\end{theorem}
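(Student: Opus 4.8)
The plan is to reduce completeness of $(\RI,\|\cdot\|)$ to completeness of $\R^2$ with its Euclidean norm by exhibiting an explicit isometric isomorphism. First I would define the map $T:\RI\to\R^2$ by $T(\ol a)=T(\langle a_c;a_w\rangle)=(a_c,\ln a_w)$. Since $a_w>0$ for every interval number, $\ln a_w$ is well-defined and ranges over all of $\R$, so $T$ is a bijection onto $\R^2$ (its inverse sends $(x,y)$ to $\langle x;\ee^{y}\rangle$). By Definitions \ref{d2} and \ref{d21}, $T(\ol a+\ol b)=(a_c+b_c,\ln(a_wb_w))=(a_c+b_c,\ln a_w+\ln b_w)=T(\ol a)+T(\ol b)$ and $T(k\ol a)=(ka_c,\ln a_w^{k})=(ka_c,k\ln a_w)=kT(\ol a)$, so $T$ is linear. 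Finally, by Definition \ref{norm}, $\|\ol a\|=\sqrt{a_c^2+\ln^2 a_w}=\|T(\ol a)\|_{\R^2}$, so $T$ is an isometry.

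Next I would run the standard completeness argument through $T$. Let $\{\ol a_n\}$ be a Cauchy sequence in $\RI$. Then $\{T(\ol a_n)\}$ is Cauchy in $\R^2$ because $\|T(\ol a_n)-T(\ol a_m)\|_{\R^2}=\|T(\ol a_n-\ol a_m)\|_{\R^2}=\|\ol a_n-\ol a_m\|$; here I use linearity of $T$ together with Definition \ref{subt}, which gives $T(\ol a_n-\ol a_m)=(\,(a_n)_c-(a_m)_c,\ \ln(a_n)_w-\ln(a_m)_w\,)=T(\ol a_n)-T(\ol a_m)$. Since $\R^2$ is complete, $T(\ol a_n)\to (x,y)$ for some $(x,y)\in\R^2$. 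Set $\ol a:=T^{-1}(x,y)=\langle x;\ee^{y}\rangle\in\RI$. Then $\|\ol a_n-\ol a\|=\|T(\ol a_n)-T(\ol a)\|_{\R^2}=\|T(\ol a_n)-(x,y)\|_{\R^2}\to 0$, so $\ol a_n\to\ol a$ in $\RI$. By Theorem \ref{thm3.2}, $\RI$ is already a normed space, hence it is a Banach space. Alternatively, one can cite Theorem \ref{thmlim} to phrase convergence componentwise and avoid invoking $T$ by name, but the isometry formulation is cleanest.

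The only genuine point requiring care is confirming that the candidate limit lands back in $\RI$ rather than escaping to a degenerate interval: this is where the hypothesis $a_w>0$ (strictly) matters. Since $T(\ol a_n)=(\,(a_n)_c,\ln(a_n)_w\,)\to(x,y)$ with $y\in\R$ finite, we get $(a_n)_w=\ee^{\ln(a_n)_w}\to\ee^{y}>0$, so the limiting radius is strictly positive and $\langle x;\ee^{y}\rangle$ is a bona fide element of $\RI$ — the space $\RI$ does not need to be enlarged to accommodate limits. Everything else is the routine transfer of the $\R^2$ Cauchy-sequence argument, so I expect no real obstacle; the main thing to state carefully is the isometry identity $\|\ol a_n-\ol a_m\|=\|T(\ol a_n)-T(\ol a_m)\|_{\R^2}$, which follows from combining linearity of $T$ with the formula for subtraction in Definition \ref{subt}.
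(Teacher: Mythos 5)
Your proposal is correct and is essentially the paper's own argument in slightly more structural clothing: the paper takes a Cauchy sequence, observes that $\{(a_m)_c\}$ and $\{\ln (a_m)_w\}$ are Cauchy in $\R$, and assembles the limit as $\langle b_c;\ee^{b_w}\rangle$, which is exactly the componentwise content of your isometry $T(\ol a)=(a_c,\ln a_w)$ with $\R^2$. Your explicit check that the limiting radius $\ee^{y}$ is strictly positive is a worthwhile point that the paper glosses over, but the underlying reduction to completeness of $\R$ (equivalently $\R^2$) is the same.
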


\begin{proof} Let $\{\ol a_m|\ol a_m=\langle (a_c)_m;(a_w)_m\rangle\},$ $m=1,2,3,...,$ be a Cauchy sequence in  $\RI$.
Thus, for all $\epsilon>0$, there exists $N\in \N_+$, when $m,n>N$, one has
$$  \|\ol a_m-\ol a_n\|= \sqrt{  ((a_m)_c-(a_n)_c)^2+(\ln (a_m)_w-\ln (a_n)_w)^2}<\epsilon,$$
which implies $|(a_m)_c-(a_n)_c|<\epsilon$ and  $|\ln (a_m)_w-\ln (a_n)_w|<\epsilon$. Thus,
 $\{  (a_m)_c\} $ and $\{  \ln (a_m)_w\} $ are also    Cauchy sequences in  $ \R $.
Therefore, there exist $  b_c,~b_w\in\R$ such that
 $$\lim_{m\to\infty}(a_m)_c=b_c, \qquad \lim_{m\to\infty}\ln (a_m)_w=b_w. $$
 Hence, for arbitrary  $\varepsilon>0$, there exists $N>0$ such that
 $$|(a_m)_c-b_c |< \varepsilon/\sqrt{2}  , \qquad |\ln (a_m)_w-b_w|< \varepsilon/\sqrt{2} ,$$
when $m>N$.
Therefore, for $m>N$,  we have
$$\|\ol a_m-\ol b \|\leq \sqrt{  ((a_m)_c-b_c)^2+(\ln (a_m)_w-b_w)^2}  < \varepsilon,$$
where $\ol b=[b_c-\ee^{b_w},b_c+\ee^{b_w}]$. This implies that $\{\ol a_m\}$ is convergent to $\ol b  $ in $\RI$. Therefore, the proof is completed.
\end{proof}

\subsubsection{Inner Product}

From Lemma \ref{law}, we see that the norm   in Definition \ref{norm} satisfies the parallelogram law,  so the inner product on $\RI$ can be given as follows.
\begin{definition}\label{IPd} For any $a,b\in \RI$,  we define
   $$\ol a\diamond\ol b = \frac14(\|\ol a+\ol b\|^2-\|\ol a-\ol b\|^2)= a_c  b_c+\ln a_w\ln b_w.$$
\end{definition}

\begin{proposition} Assume that $\ol a,~\ol b,~\ol c\in \RI$, $k,~l\in\R$, then we have
   \begin{itemize}
     \item[(1)] {Commutativity:} $\ol a\diamond\ol  b=\ol b\diamond\ol  a$;
     \item[(2)]{ Linearity:} $(k\ol a+l\ol b)\diamond \ol c=k \ol  a \diamond\ol c+l  \ol b\diamond\ol c$;
     \item[(3)] {Positive-Definiteness:}  $ \ol a\diamond \ol a \geq 0$, and { $  \ol a \diamond \ol a = 0 \Leftrightarrow \ol a= \ol 0$}.
   \end{itemize}
\end{proposition}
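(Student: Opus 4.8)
The plan is to verify the three inner-product axioms directly from the closed-form expression $\ol a\diamond\ol b = a_c b_c+\ln a_w\ln b_w$ given in Definition \ref{IPd}, treating the pair $(a_c,\ln a_w)\in\R^2$ as the natural coordinates of $\ol a$. Under the operations of Definitions \ref{d2} and \ref{d21}, addition acts as $(a_c,\ln a_w)\mapsto(a_c+b_c,\ln a_w+\ln b_w)$ and scalar multiplication as $k\cdot(a_c,\ln a_w)=(ka_c,k\ln a_w)$, so the map $\ol a\mapsto(a_c,\ln a_w)$ is a linear isomorphism onto $\R^2$ carrying $\diamond$ to the standard Euclidean dot product; each axiom then reduces to the corresponding well-known fact about the dot product. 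I will, however, present the argument intrinsically in terms of $a_c,a_w$ to keep the proof self-contained.

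First I would prove commutativity: since $a_c b_c = b_c a_c$ and $\ln a_w\ln b_w=\ln b_w\ln a_w$ in $\R$, we get $\ol a\diamond\ol b=\ol b\diamond\ol a$ immediately. Next, for linearity, I would compute the left-hand side using the coordinate description of $k\ol a+l\ol b$, namely center $ka_c+lb_c$ and radius $a_w^k b_w^l$, so that $\ln$ of the radius equals $k\ln a_w+l\ln b_w$; substituting into the formula gives $(ka_c+lb_c)c_c+(k\ln a_w+l\ln b_w)\ln c_w$, which expands and regroups as $k(a_cc_c+\ln a_w\ln c_w)+l(b_cc_c+\ln b_w\ln c_w)=k(\ol a\diamond\ol c)+l(\ol b\diamond\ol c)$. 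Finally, for positive-definiteness, $\ol a\diamond\ol a = a_c^2+\ln^2 a_w\geq 0$ as a sum of squares of real numbers, and it vanishes exactly when $a_c=0$ and $\ln a_w=0$, i.e. $a_c=0$ and $a_w=1$, which by Proposition \ref{p1}(3) is precisely $\ol a=\ol 0$.

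There is essentially no obstacle here; the content was front-loaded into Lemma \ref{law}, which establishes the parallelogram law and hence guarantees that the polarization formula $\tfrac14(\|\ol a+\ol b\|^2-\|\ol a-\ol b\|^2)$ actually defines a genuine inner product whose induced norm is $\|\cdot\|$. The only point requiring a modicum of care is making sure the bilinearity computation correctly uses $\ln$ of the \emph{product/power} radius formulas from Definitions \ref{d2} and \ref{d21}, rather than naively adding radii; once that translation is in place, every step is a one-line real-number identity. After stating the proposition I would also remark that, combined with Theorem \ref{thm3.3} (completeness in the norm $\|\cdot\|$) and the identity $\|\ol a\|^2=\ol a\diamond\ol a$, this shows $\RI$ is a Hilbert space.
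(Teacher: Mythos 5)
Your proof is correct and follows essentially the same route as the paper: direct verification of each axiom from the closed form $\ol a\diamond\ol b=a_cb_c+\ln a_w\ln b_w$, using that $k\ol a+l\ol b$ has center $ka_c+lb_c$ and radius $a_w^kb_w^l$ so that the logarithm of the radius is additive. The coordinate-isomorphism remark $\ol a\mapsto(a_c,\ln a_w)$ is a nice conceptual gloss not made explicit in the paper, but the computation you actually carry out is the paper's.
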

\begin{proof}
(1) It is clear that $$  \ol a\diamond\ol  b =a_c  b_c+\ln a_w\ln b_w=b_c  a_c+\ln b_w\ln a_w=\ol b\diamond\ol a.$$

(2) It follows that
\begin{align*}
  ( k\ol a+l\ol b)\diamond\ol  c&=\langle ka_c+l b_c; {a_c}^k {b_c}^l \rangle\diamond\langle c_c;c_w \rangle\\
  &=   (ka_c+l b_c)c_c+ \ln({a_c}^k {b_c}^l)\ln c_w\\
  &=  ka_cc_c+  k\ln {a_c} \ln c_w+l b_cc_c +  l\ln {b_c} \ln c_w\\
   &= k\ol a \diamond\ol  c  +l\ol b\diamond\ol  c.
\end{align*}

(3) We observe that $\ol a\diamond \ol a= a_c^2+  \ln^2a_w\geq 0$ and { $\ol a\diamond \ol a= 0 \Leftrightarrow \ol a= \ol 0$}.
\end{proof}

Thus, ``$\diamond$" is an inner product on $\RI$.

\begin{lemma}\label{lem3.1}  $\RI$ is an inner product space under the  inner product ``$\diamond $".
\end{lemma}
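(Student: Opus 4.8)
The final statement to prove is Lemma~\ref{lem3.1}: that $\RI$ is an inner product space under the inner product ``$\diamond$''.

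The plan is to assemble this from the pieces already established rather than recompute anything. First, I would recall that an inner product space is by definition a linear space equipped with an inner product. The linear-space structure of $\RI$ under ``$+$'' and ``$\cdot$'' was already established as a theorem in the excerpt (the theorem stating ``$\RI$ is a linear space''), which itself rests on Propositions~\ref{p1} and~\ref{p2}. So the first step is simply to invoke that result to guarantee the underlying algebraic structure. Next, I would invoke the proposition immediately preceding Lemma~\ref{lem3.1}, which verifies the three defining axioms of an inner product for ``$\diamond$'': commutativity (symmetry), linearity in the first argument, and positive-definiteness. Combining these two facts gives exactly the assertion: $\RI$ together with ``$\diamond$'' is an inner product space.

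The proof is therefore very short. I would write something like: ``By Theorem (on $\RI$ being a linear space), $\RI$ is a linear space under ``$+$'' and ``$\cdot$''. By the preceding proposition, ``$\diamond$'' is symmetric, linear in its first argument, and positive-definite, hence an inner product on $\RI$. Therefore $\RI$ endowed with ``$\diamond$'' is an inner product space.'' One small point worth a sentence of care: the positive-definiteness axiom requires $\ol a \diamond \ol a = 0 \iff \ol a = \ol 0$, and the ``$\Leftarrow$'' direction is immediate since $\ol 0 = \langle 0;1\rangle$ gives $0^2 + \ln^2 1 = 0$; the ``$\Rightarrow$'' direction follows because $a_c^2 + \ln^2 a_w = 0$ forces $a_c = 0$ and $\ln a_w = 0$, i.e. $a_w = 1$, so $\ol a = \langle 0;1\rangle = \ol 0$ — but this is already contained in the cited proposition, so I would only mention it if the referee wants the detail spelled out.

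There is essentially no obstacle here: the lemma is a bookkeeping corollary of results proved earlier in the section, and the only thing to be careful about is citing the correct earlier statements (the linear-space theorem and the inner-product-axioms proposition) so the logical dependency is transparent. If anything, the ``hard part'' was already done in verifying that the norm of Definition~\ref{norm} satisfies the parallelogram law (Lemma~\ref{law}), which is what licenses defining ``$\diamond$'' by polarization in Definition~\ref{IPd} in the first place; given that groundwork, Lemma~\ref{lem3.1} is immediate.
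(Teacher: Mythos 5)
Your proof is correct and matches the paper's (implicit) argument exactly: the paper states the lemma without a separate proof, relying on the immediately preceding proposition that verifies commutativity, linearity, and positive-definiteness of ``$\diamond$'', together with the earlier theorem that $\RI$ is a linear space. Your additional spelled-out check of positive-definiteness is consistent with the computation already given in that proposition.
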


\begin{theorem}\label{thm3.5} $\RI$ is a Hilbert space.
\end{theorem}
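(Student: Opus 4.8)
The plan is to assemble the pieces already proved in the excerpt. A Hilbert space is by definition a complete inner product space, so the proof reduces to two things: that $\RI$ with $\diamond$ is an inner product space, and that the norm induced by $\diamond$ is complete. Both have essentially been established, and the only genuine work is to check that the inner product $\diamond$ induces exactly the norm $\|\cdot\|$ of Definition \ref{norm}, so that completeness in the norm (Theorem \ref{thm3.3}) transfers to the inner-product setting.

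First I would invoke Lemma \ref{lem3.1}, which states that $\RI$ is an inner product space under $\diamond$. Next I would observe that the norm induced by this inner product coincides with the one in Definition \ref{norm}: indeed $\sqrt{\ol a\diamond\ol a}=\sqrt{a_c^2+\ln^2 a_w}=\|\ol a\|$, using Definition \ref{IPd}. Then I would appeal to Theorem \ref{thm3.3}, which asserts that $\RI$ is a Banach space under $\|\cdot\|$, i.e. complete. Since the inner-product norm agrees with $\|\cdot\|$, the space $(\RI,\diamond)$ is a complete inner product space, hence a Hilbert space, completing the proof.

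I do not expect a serious obstacle here: the substantive results — the parallelogram law (Lemma \ref{law}), the inner-product axioms, and completeness — are all in hand. The one point requiring a line of justification is the compatibility of the $\diamond$-norm with $\|\cdot\|$, which follows immediately from the formulas in Definitions \ref{norm} and \ref{IPd}; one could equally note that $\diamond$ was \emph{defined} by polarization from $\|\cdot\|$ precisely because the latter satisfies the parallelogram law, so consistency is automatic. Thus the proof is a two- or three-sentence citation of Lemma \ref{lem3.1} and Theorem \ref{thm3.3} together with this elementary observation.

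\begin{proof}
By Lemma \ref{lem3.1}, $\RI$ is an inner product space under $\diamond$. Moreover, for any $\ol a\in\RI$, the norm induced by $\diamond$ satisfies
$$\sqrt{\ol a\diamond\ol a}=\sqrt{a_c^2+\ln^2 a_w}=\|\ol a\|,$$
so it coincides with the norm of Definition \ref{norm}. By Theorem \ref{thm3.3}, $\RI$ is complete with respect to $\|\cdot\|$. Hence $\RI$ is a complete inner product space, that is, a Hilbert space.
\end{proof}
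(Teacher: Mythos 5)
Your proof is correct and follows essentially the same route as the paper: verify that $\|\ol a\|=\sqrt{\ol a\diamond\ol a}$ so the norm is induced by the inner product, then cite the completeness of $(\RI,\|\cdot\|)$ from Theorem \ref{thm3.3}. The explicit appeal to Lemma \ref{lem3.1} for the inner-product axioms is a slight tidying of what the paper leaves implicit, but the argument is the same.
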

\begin{proof} By Definition \ref{IPd}, we have  $\|\ol a\|=\sqrt{\ol a\diamond \ol a }$ for any $\ol a\in\RI$. This implies that the norm is induced from the inner product. It follows from Theorem \ref{thm3.3} that $\RI$ is a Banach space endowed with the  above norm $\|\cdot\| $. Thus, we complete the proof.
\end{proof}

\section{The interval-valued function}
Let $[a,b]\subset \R$,
a mapping $\ol f:[a,b]\to \RI$ is called an interval-valued function (IVF).
For any  IVF $\ol f:[a,b]\to \RI$, there exist two real functions $f_l,f_r:[a,b]\to \R$ such that
$$\ol f(x)=[f_l(x),f_r(x)], \qquad  x\in [a,b],$$
where $f_l,f_r$ are called the left and right endpoint functions of $\ol f$, respectively.
Now let
$$f_c(x)=\frac{f_l(x)+f_r(x)}{2},  \qquad  f_w(x)=\frac{f_r(x)-f_l(x)}{2},  \qquad  x\in [a,b],$$
then $f_c$ is called the \textit{center function} of $\ol f$, and  $f_w$ is called the \textit{radius function} of $\ol f$.

\begin{definition}
   Let $\ol f, \ol g: [a,b]\to\RI$, we define
$$\ol f(x)\star\ol g(x):= \langle f_c(x);f_w(x)\rangle\star \langle g_c(x);g_w(x)\rangle,\qquad x\in [a,b],$$
where $\star\in \{+,-,\times,\div\}$.
\end{definition}

\begin{example}\label{ex3}
Considering that $\ol f(x)=[0,2]-[-1,1]|x-2| $  and $\ol g(x)=[0,2]\ominus_{gH}[-1,1]\odot|x-2|$, $x\in[0,4]$.
Thus,  $\ol f(x)=[0,2]$ on [0,4] and
\begin{align*}
  \ol g(x)=\begin{cases}
[|x-2|,2-|x-2|],& x\in[0,1)\cup(3,4],\\
[2-|x-2|,|x-2|],& x\in[1,3].
\end{cases}
\end{align*}
 see Figure \ref{f4}.

\begin{figure}[h]
  \centering
  \includegraphics[height=6cm]{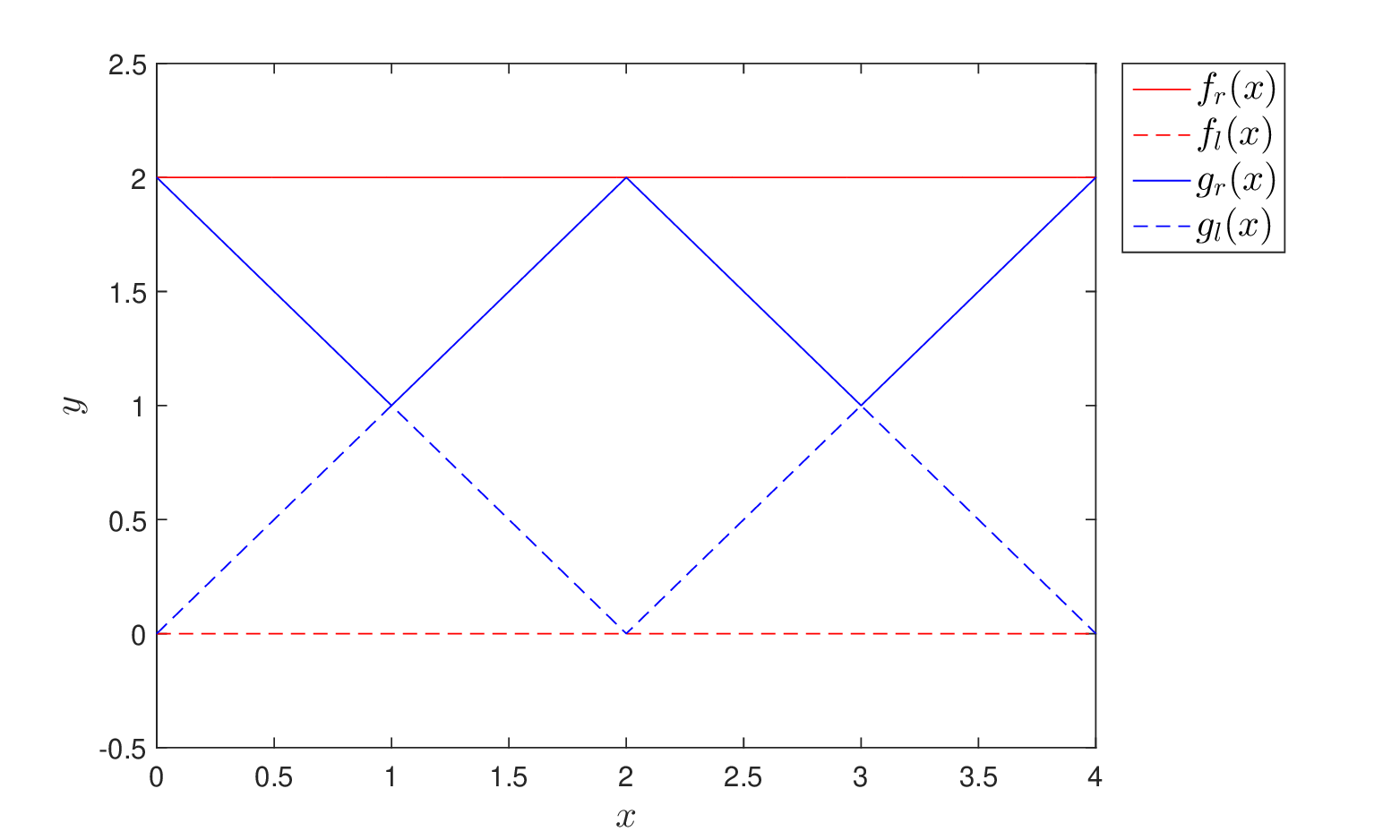}
  \caption{ IVFs $\ol f $ and $\ol g$ in Example \ref{e3}}\label{f4}
\end{figure}
\end{example}

\begin{definition}\label{def32222}
Let $\ol f: [a,b]\to\RI$, $    g: [a,b]\to\R$, we define
$$\ol f(x)\star  g(x):= \ol f(x)\star  (\ol 1~g(x)),\qquad x\in [a,b],$$
$$g(x)\star  \ol f(x):=  (\ol 1~g(x)) \star \ol f(x) ,\qquad x\in [a,b],$$
where $\star\in \{+,-,\times,\div\}$.
\end{definition}

Now we introduce the continuity of IVFs.
\begin{definition}\label{CIdef}
 Let $\ol f:[a,b]\to \RI$ be   an interval-valued function, given a point $x\in[a,b]$, if for arbitrary $\varepsilon>0$, there exists $\delta>0$ such that
$$d(\ol f(x),\ol f(y))<\varepsilon,$$
when $y\in O(x,\delta)\cap [a,b]$. Then $\ol f$ is called continuous at $x$.  Further, $\ol f$ is called continuous on $[a,b]$ if $\ol f$ is continuous on every $x\in[a,b]$.
\end{definition}

Denoted by $C([a,b],\RI) $ the space of all continuous IVFs on $[a,b]$.

\begin{theorem}\label{thmcont}
Let $\ol f:[a,b]\to \RI$ be   an interval-valued function. Then $\ol f$ is continuous on $[a,b]$ if and only if   $f_l$ and $f_r$  are continuous on $[a,b]$.
\end{theorem}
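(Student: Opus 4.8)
The plan is to reduce the continuity of the interval-valued function $\ol f$ to the continuity of its center and radius functions $f_c, f_w$, and then translate back to the endpoint functions $f_l, f_r$ via the linear change of coordinates $f_l = f_c - f_w$, $f_r = f_c + f_w$. The crucial observation is that the distance $d(\ol f(x), \ol f(y)) = \sqrt{(f_c(x)-f_c(y))^2 + (\ln f_w(x) - \ln f_w(y))^2}$ from Definition \ref{dist} controls, and is controlled by, the two scalar quantities $|f_c(x)-f_c(y)|$ and $|\ln f_w(x) - \ln f_w(y)|$; indeed each of these is at most $d(\ol f(x),\ol f(y))$, and conversely $d(\ol f(x),\ol f(y)) \le |f_c(x)-f_c(y)| + |\ln f_w(x)-\ln f_w(y)|$.

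First I would prove that $\ol f$ is continuous at a point $x$ if and only if $f_c$ is continuous at $x$ and $f_w$ is continuous at $x$. For the forward direction, given $\varepsilon>0$ pick $\delta>0$ as in Definition \ref{CIdef}; then for $y\in O(x,\delta)\cap[a,b]$ we get $|f_c(x)-f_c(y)| \le d(\ol f(x),\ol f(y)) < \varepsilon$, so $f_c$ is continuous at $x$, and likewise $|\ln f_w(x)-\ln f_w(y)| < \varepsilon$, so $\ln f_w$ is continuous at $x$; since $\ln$ is a homeomorphism of $(0,\infty)$ onto $\R$ and $f_w$ takes values in $(0,\infty)$, continuity of $\ln f_w$ is equivalent to continuity of $f_w$. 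For the converse, given $\varepsilon>0$, use continuity of $f_c$ and of $\ln f_w$ to find $\delta>0$ with $|f_c(x)-f_c(y)| < \varepsilon/\sqrt{2}$ and $|\ln f_w(x)-\ln f_w(y)| < \varepsilon/\sqrt 2$ for $y\in O(x,\delta)\cap[a,b]$, whence $d(\ol f(x),\ol f(y)) < \varepsilon$. Next I would record that $f_l, f_r$ are continuous if and only if $f_c, f_w$ are, which is immediate since $f_c = (f_l+f_r)/2$, $f_w = (f_r-f_l)/2$ and $f_l = f_c-f_w$, $f_r = f_c+f_w$ express each pair as fixed finite linear combinations of the other. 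Combining the two equivalences and quantifying over all $x\in[a,b]$ gives the theorem.

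The only genuine subtlety — the point I expect to be the main obstacle, though a minor one — is the presence of the logarithm in the metric: continuity of $\ol f$ is literally equivalent to joint continuity of $(f_c, \ln f_w)$, not of $(f_c, f_w)$, so one must invoke that $f_w > 0$ everywhere (guaranteed because $\ol f$ takes values in $\RI$, where radii are strictly positive by the standing convention that degenerate intervals are excluded) and that $\ln$ and $\exp$ are mutually inverse continuous bijections between $(0,\infty)$ and $\R$. Once this is noted the argument is a routine $\varepsilon$--$\delta$ chase, entirely parallel to the proof of Theorem \ref{thmlim}, and everything else is the elementary equivalence of the sup-type and sum-type combinations of two nonnegative reals.
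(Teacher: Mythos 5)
Your proposal is correct and follows essentially the same route as the paper: both reduce continuity of $\ol f$ to continuity of $f_c$ and $\ln f_w$ via the metric of Definition \ref{dist}, pass between $\ln f_w$ and $f_w$ using that $f_w>0$, and translate to $f_l,f_r$ through the linear relations $f_l=f_c-f_w$, $f_r=f_c+f_w$. Your explicit remark that the logarithm is the only subtlety (and that $f_w>0$ is guaranteed by the exclusion of degenerate intervals) is a point the paper's proof uses implicitly but does not spell out.
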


\begin{proof}  (Sufficiency)  Since  $f_l$ and $f_r$  are continuous on $[a,b]$, then
$f_c=\frac{f_l+f_r}{2}$ and $f_w=\frac{f_r-f_l}{2}$ are also continuous on $[a,b]$, so is $\ln f_w$.
Let $x\in[a,b]$, then for arbitrary $\varepsilon_1>0$, there exists $\delta_1>0$ such that
$$|f_c(x)-f_c(y)|\leq \varepsilon_1,$$
where $y\in O(x,\delta_1)\cap [a,b]$.
Also, for arbitrary $\varepsilon_2>0$, there exists $\delta_2>0$ such that
$$|\ln f_w(x)-\ln f_w(y)|\leq\varepsilon_2,$$
where $y\in O(x,\delta_2)\cap [a,b]$.
Let $\delta=\min\{\delta_1,\delta_2\}$,  $\varepsilon=\max\{\varepsilon_1,\varepsilon_2\}$,
Thus,
$$d(\ol f(x),\ol f(y))=\sqrt{|f_c(x)-f_c(y)|^2+|\ln f_w(x)-\ln f_w(y)|^2}\leq \sqrt{\varepsilon_1^2+\varepsilon_2^2}\leq \sqrt{2} \varepsilon,$$
where $y\in O(x,\delta)\cap [a,b]$. By Definition \ref{CIdef}, $\ol f$ is continuous on $[a,b]$.

  (Necessity)  Since $\ol f$ is continuous on $[a,b]$,  for arbitrary $\varepsilon>0$, there exists $\delta>0$ such that
$$d(\ol f(x),\ol f(y))=\sqrt{|f_c(x)-f_c(y)|^2+|\ln f_w(x)-\ln f_w(y)|^2}\leq \varepsilon,~~ x\in[a,b],$$
where  $y\in O(x,\delta)\cap [a,b]$. Thus,
 $|f_c(x)-f_c(y)|\leq \varepsilon$, and $|\ln f_w(x)-\ln f_w(y)|\leq \varepsilon$,
  which imply $f_c$ and $\ln f_w$ are also continuous on $[a,b]$, so is $  f_w$.
  Therefore,
  $f_l= {f_c-f_w}$ and $f_r= {f_c+f_w} $ are also continuous on $[a,b]$.
\end{proof}

\begin{definition}\label{cnorm}
  For any $\ol f\in C([a,b],\RI) $, we define
  $$\|\ol f\|_{\infty}=\max_{x\in[a,b]}\|\ol f(x)\|.$$
\end{definition}

By Definition \ref{cnorm}, it is easy to see that
\begin{proposition}\label{cp3}
 Assume that $\ol f,\ol g\in C([a,b],\RI)$, $k\in\R$, then we have
   \begin{itemize}
     \item[(1)]  $\|\ol f\|_{\infty}\geq 0$, and  { $\|\ol f\|_{\infty}= 0$ if and only if $\ol f=\ol 0$};
     \item[(2)] $\| k\ol f\|_{\infty}=|k| \| \ol  f\|_{\infty}$;
     \item[(3)]  $\|  \ol f+\ol g\|_{\infty}\leq \| \ol  f\|_{\infty}+\|\ol  g\|_{\infty}$.
   \end{itemize}
\end{proposition}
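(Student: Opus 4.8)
The plan is to reduce all three assertions to their pointwise analogues established in Proposition \ref{p3}, combined with elementary monotonicity properties of the maximum of a real-valued function over a compact set. Before invoking Definition \ref{cnorm} I would first record that $\|\ol f\|_\infty$ is genuinely well defined: by Theorem \ref{thmcont}, continuity of $\ol f$ is equivalent to continuity of $f_l$ and $f_r$, hence of $f_c$ and $f_w$, and since $f_w>0$ everywhere also of $\ln f_w$; therefore $x\mapsto\|\ol f(x)\|=\sqrt{f_c(x)^2+\ln^2 f_w(x)}$ is a continuous real function on the compact interval $[a,b]$ and attains its maximum. This makes $\|\ol f\|_\infty$ a well-defined nonnegative real number, so Definition \ref{cnorm} is legitimate.

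For (1), I would observe that nonnegativity of $\|\ol f\|_\infty$ is inherited from $\|\ol f(x)\|\ge 0$ (Proposition \ref{p3}(1)); and if $\|\ol f\|_\infty=0$ then $0\le\|\ol f(x)\|\le\|\ol f\|_\infty=0$ for every $x$, so Proposition \ref{p3}(1) gives $\ol f(x)=\ol 0$ pointwise, i.e. $\ol f=\ol 0$, the converse being immediate from $\|\ol 0\|=0$. For (2), applying Proposition \ref{p3}(2) at each $x$ turns $\max_{x\in[a,b]}\|k\ol f(x)\|$ into $\max_{x\in[a,b]} |k|\,\|\ol f(x)\|=|k|\max_{x\in[a,b]}\|\ol f(x)\|$, since the nonnegative constant $|k|$ factors through the maximum. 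For (3), Proposition \ref{p3}(3) gives $\|\ol f(x)+\ol g(x)\|\le\|\ol f(x)\|+\|\ol g(x)\|\le\|\ol f\|_\infty+\|\ol g\|_\infty$ for every $x\in[a,b]$; since this bound is independent of $x$, it dominates $\|\ol f+\ol g\|_\infty=\max_{x\in[a,b]}\|\ol f(x)+\ol g(x)\|$, which is exactly the triangle inequality.

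There is no genuine obstacle here: the statement is a routine verification of the sup-norm axioms. The only step I would treat with any care is the preliminary one — that the maximum in Definition \ref{cnorm} exists — which is precisely where Theorem \ref{thmcont} and the compactness of $[a,b]$ enter; once that is in place, parts (1)--(3) are immediate transfers of Proposition \ref{p3} across the order-preserving operation ``take the maximum over $[a,b]$.''
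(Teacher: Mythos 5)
Your proof is correct and follows essentially the same route as the paper: all three axioms are inherited from the pointwise norm of Proposition \ref{p3} by passing to the maximum over $[a,b]$ (the paper's part (3) re-derives the triangle inequality via an inline Cauchy--Schwarz expansion of the cross term, whereas you simply cite Proposition \ref{p3}(3) pointwise, which is marginally cleaner but not a different argument). Your preliminary observation that the maximum in Definition \ref{cnorm} is actually attained — via Theorem \ref{thmcont} and compactness of $[a,b]$ — is a worthwhile addition that the paper leaves implicit.
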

\begin{proof}(1)
$\|\ol f\|_{\infty} =\max_{x\in[a,b]}\|\ol f(x)\|=\max_{x\in[a,b]}\sqrt{f_c^2(x)+\ln^2 f_w(x)}\geq 0.$
Thus, { $\|\ol f\|_{\infty}= 0$ if and only if $\ol f=\ol 0$}.

(2) $\| k\ol f\|_{\infty}=\max_{x\in[a,b]}\|k\ol f(x)\|=\max_{x\in[a,b]}\sqrt{k^2f_c^2(x)+k^2\ln^2 f_w(x)} =|k| \| \ol  f\|_{\infty}$.

(3) \begin{align*}
  \|  \ol f+\ol g\|_{\infty}&=\max_{x\in[a,b]}\sqrt{(f_c(x)+g_c(x))^2+(\ln   f_w(x)+\ln g_w(x))^2}\\
  &\leq \max_{x\in[a,b]}\sqrt{\| \ol  f\|_{\infty}^2+2f_c(x)g_c(x)+2\ln   f_w(x)\ln    g_w(x)   +\| \ol  g\|_{\infty}^2}\\
  &\leq\sqrt{\| \ol  f\|_{\infty}^2+2 \| \ol  f\|_{\infty}\| \ol  g\|_{\infty}   +\| \ol  g\|_{\infty}^2}\\
  &=\| \ol  f\|_{\infty}+\|\ol  g\|_{\infty}.
\end{align*}
This ends the proof.
\end{proof}
According to Proposition \ref{cp3}, $\|\cdot\|_{\infty}$ is a norm on $C([a,b],\RI)$, and
\begin{lemma}\label{thm3.21}
The space $C([a,b],\RI)$ endowed with the norm $\|\cdot\|_{\infty}$ is a normed space.
\end{lemma}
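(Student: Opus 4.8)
The plan is to reduce the statement to Proposition \ref{cp3}, which already records the three defining properties of a norm for $\|\cdot\|_{\infty}$; the only points that still need attention are that $\|\cdot\|_{\infty}$ takes finite values, so that it is genuinely a map $C([a,b],\RI)\to\R$, and that $C([a,b],\RI)$ is itself a linear space, so that speaking of a normed space is meaningful.

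First I would check well-definedness. Given $\ol f\in C([a,b],\RI)$, Theorem \ref{thmcont} shows that $f_l$ and $f_r$ are continuous on $[a,b]$, hence so are $f_c=\tfrac{f_l+f_r}{2}$ and $f_w=\tfrac{f_r-f_l}{2}$; since $f_w>0$ everywhere, $\ln f_w$ is continuous as well. Consequently $x\mapsto\|\ol f(x)\|=\sqrt{f_c^2(x)+\ln^2 f_w(x)}$ is a continuous real function on the compact interval $[a,b]$, so it attains its maximum, and $\|\ol f\|_{\infty}$ is a well-defined finite nonnegative number.

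Next I would record that $C([a,b],\RI)$ is a linear space under the pointwise operations inherited from $\RI$. Since $\RI$ is a linear space, the set of all IVFs $[a,b]\to\RI$ is a linear space; it remains to see that continuity is preserved under sums and scalar multiples. By Theorem \ref{thmcont} this amounts to checking that the endpoint functions stay continuous: for $\ol f+\ol g$ they are $f_c+g_c\pm f_w g_w$, and for $k\ol f$ they are $kf_c\pm f_w^{\,k}$, all continuous because sums, products and (as $f_w>0$) real powers of continuous real functions are continuous.

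Finally, with $\|\cdot\|_{\infty}$ well-defined on the linear space $C([a,b],\RI)$, the three norm axioms --- nonnegativity together with $\|\ol f\|_{\infty}=0\iff\ol f=\ol 0$, absolute homogeneity, and the triangle inequality --- are exactly the content of Proposition \ref{cp3}, so $(C([a,b],\RI),\|\cdot\|_{\infty})$ is a normed space. There is no real obstacle here; the only place requiring a moment of care is the existence of the maximum defining $\|\ol f\|_{\infty}$, which follows from the continuity established above together with the compactness of $[a,b]$.
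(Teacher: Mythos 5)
Your proof is correct and follows essentially the same route as the paper, which simply derives the lemma from Proposition \ref{cp3}. The extra details you supply --- finiteness of the maximum via continuity of $x\mapsto\|\ol f(x)\|$ on the compact interval, and closure of $C([a,b],\RI)$ under the pointwise operations --- are legitimate points that the paper leaves implicit, and you verify them correctly.
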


Further, we can prove that
 \begin{theorem}\label{thm3.22}
The space $C([a,b],\RI)$ endowed with the norm $\|\cdot\|_{\infty}$ is a Banach space.
\end{theorem}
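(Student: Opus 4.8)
The plan is to prove completeness, since Lemma~\ref{thm3.21} already gives that $C([a,b],\RI)$ is a normed space. Let $\{\ol f_m\}_{m\ge 1}$ be a Cauchy sequence in $C([a,b],\RI)$, with $\ol f_m(x)=\langle (f_m)_c(x);(f_m)_w(x)\rangle$. For each fixed $x\in[a,b]$ one has $\|\ol f_m(x)-\ol f_n(x)\|\le\|\ol f_m-\ol f_n\|_\infty$ by Definition~\ref{cnorm}, so $\{\ol f_m(x)\}_m$ is a Cauchy sequence in $\RI$; by Theorem~\ref{thm3.3} it converges to some $\ol f(x)=\langle f_c(x);f_w(x)\rangle\in\RI$. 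Since $f_w(x)>0$ automatically (being the radius of an interval number), $\ol f(x)$ is a genuine interval number, so $\ol f:[a,b]\to\RI$ is well defined; I would emphasize that no degeneracy can be produced in the limit precisely because $\RI$ is complete in its own norm.

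Next I would upgrade this to uniform convergence by the standard argument. Given $\varepsilon>0$, choose $N$ with $\|\ol f_m-\ol f_n\|_\infty<\varepsilon$ for $m,n>N$. Fixing $x\in[a,b]$ and $m>N$, the triangle inequality (Proposition~\ref{p3}, with subtraction as in Definition~\ref{subt}) gives $\|\ol f_m(x)-\ol f(x)\|\le\|\ol f_m(x)-\ol f_n(x)\|+\|\ol f_n(x)-\ol f(x)\|<\varepsilon+\|\ol f_n(x)-\ol f(x)\|$, and letting $n\to\infty$ the last term vanishes, so $\|\ol f_m(x)-\ol f(x)\|\le\varepsilon$ for every $x$, i.e. $\|\ol f_m-\ol f\|_\infty\le\varepsilon$ for $m>N$. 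By Theorem~\ref{thmlim} this estimate reads
$$\sup_{x\in[a,b]}\left[\big((f_m)_c(x)-f_c(x)\big)^2+\big(\ln (f_m)_w(x)-\ln f_w(x)\big)^2\right]\le\varepsilon^2,$$
so the real-valued sequences $(f_m)_c$ and $\ln (f_m)_w$ converge uniformly on $[a,b]$ to $f_c$ and $\ln f_w$, respectively.

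It remains to verify $\ol f\in C([a,b],\RI)$. Since each $\ol f_m$ is continuous, Theorem~\ref{thmcont} gives that $(f_m)_l,(f_m)_r$, hence $(f_m)_c$, $(f_m)_w$ and $\ln (f_m)_w$, are continuous on $[a,b]$. A uniform limit of continuous real functions is continuous, so $f_c$ and $\ln f_w$ are continuous, and therefore $f_w=\exp(\ln f_w)$, $f_l=f_c-f_w$, $f_r=f_c+f_w$ are continuous; Theorem~\ref{thmcont} then yields $\ol f\in C([a,b],\RI)$. Combined with $\|\ol f_m-\ol f\|_\infty\to 0$, this shows every Cauchy sequence converges in $C([a,b],\RI)$, which is therefore Banach. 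I expect the only point needing genuine care to be that the limiting radius function does not collapse to zero; this is secured by working with $\ln (f_m)_w$ and observing that its uniform limit is a finite continuous real function, after which the remainder is the textbook proof that a uniform limit of continuous maps into a complete space is continuous.
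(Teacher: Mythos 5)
Your proof is correct and is essentially the argument the paper has in mind: the paper omits the proof of this theorem, stating only that it is "similar to that of Theorem \ref{thm3.3}," and your componentwise reduction (Cauchy pointwise via completeness of $\RI$, upgrade to uniform convergence of $(f_m)_c$ and $\ln (f_m)_w$, then continuity of the limit via Theorem \ref{thmcont}) is exactly that similar argument carried out in full. The only cosmetic quibble is that the uniform estimate follows directly from Definition \ref{cnorm} and Definition \ref{norm} rather than from Theorem \ref{thmlim}, but this does not affect correctness.
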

\begin{proof}
  The proof is similar to that of Theorem \ref{thm3.3}, so we omit it.
\end{proof}

\section{Derivative of IVFs}

In this section, we will present a new Derivative of IVFs based on the new subtraction given in Definition \ref{subt}, and
discuss  its relationship with the classical derivative,  the multiplicative derivative given in \cite{BKO08}, and the $gH$-derivative given in \cite{SB09}, respectively.

\subsection{Basic Definition and Properties}

\begin{definition}\label{def1}
Suppose that the IVF $\ol f:[a,b]\rightarrow \RI$ and $x_0\in [a,b]$ and $x_0+h\in[a,b]$, then $\ol f$ is said to be differentiable at $x_0$ if there exists $\ol f'(x_0)\in \RI$ such that
\begin{equation}
\ol f'(x_0)=\lim\limits_{h\rightarrow 0}\frac{\ol f(x_0+h)-\ol f(x_0)}{h}.
\end{equation}
We say $\ol f$ is  differentiable on $[a,b]$ if it is differentiable at each $x_0\in[a,b]$.
\end{definition}

\begin{theorem}\label{thmdiff}
  Let $\ol f:[a,b]\to \RI$ be an IVF  with $\overline f(x)=\left\langle  {f}_c(x); {f}_w(x) \right\rangle $. Then $\overline f$  is differentiable at  $x\in[a,b]$ if and only if $ {f}_c$ is differentiable at $x$, and  ${f}_w$ is multiplicative differential at $x$. Moreover,
  $$\overline f'( x )=\left\langle {f}_c'(x);{f}_w^*(x) \right\rangle,$$
  where ${f}_w^*$ is the multiplicative derivative of $f_w$ on $[a,b]$ (see details in \cite{BKO08}), i.e.,
  \begin{equation}\label{mulder}
  f^*(x)=\lim_{h\to 0}\left(\frac{{f}  (x+h)}{{f} (x)}\right)^{\frac1h}=\ee^{(\ln f)'(x)}.
\end{equation}
\end{theorem}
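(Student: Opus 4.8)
The plan is to reduce the interval-valued limit in Definition~\ref{def1} to two scalar limits by unpacking the new subtraction and scalar multiplication, and then recognize each scalar limit as a classical derivative and a multiplicative derivative, respectively. Concretely, for $h\neq 0$ write $\ol f(x+h)=\langle f_c(x+h);f_w(x+h)\rangle$ and $\ol f(x)=\langle f_c(x);f_w(x)\rangle$. By Definition~\ref{subt},
\[
\ol f(x+h)-\ol f(x)=\left\langle f_c(x+h)-f_c(x);\ \frac{f_w(x+h)}{f_w(x)}\right\rangle,
\]
and then by Definition~\ref{d21} (scalar multiplication by $1/h$),
\[
\frac{\ol f(x+h)-\ol f(x)}{h}=\left\langle \frac{f_c(x+h)-f_c(x)}{h};\ \left(\frac{f_w(x+h)}{f_w(x)}\right)^{1/h}\right\rangle.
\]

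Next I would invoke Theorem~\ref{thmlim}: a sequence (or, passing to the limit $h\to 0$ along arbitrary null sequences, a net) of interval numbers $\langle c_h; w_h\rangle$ converges to $\langle c_0; w_0\rangle$ if and only if $c_h\to c_0$ and $w_h\to w_0$ in $\R$. Applying this to the quotient above, $\ol f$ is differentiable at $x$ with $\ol f'(x)=\langle \alpha;\beta\rangle$ precisely when
\[
\lim_{h\to 0}\frac{f_c(x+h)-f_c(x)}{h}=\alpha
\qquad\text{and}\qquad
\lim_{h\to 0}\left(\frac{f_w(x+h)}{f_w(x)}\right)^{1/h}=\beta.
\]
The first condition is exactly the statement that $f_c$ is differentiable at $x$ with $f_c'(x)=\alpha$. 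The second is exactly the definition of multiplicative differentiability of $f_w$ at $x$ recalled in \eqref{mulder}, with $f_w^*(x)=\beta$; moreover, since $f_w>0$ on $[a,b]$, one has the identity $f_w^*(x)=\ee^{(\ln f_w)'(x)}$, so this condition is equivalent to differentiability of $\ln f_w$ at $x$. Combining the two equivalences gives $\ol f'(x)=\langle f_c'(x); f_w^*(x)\rangle$, which is the claimed formula.

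I anticipate the only delicate point is a careful treatment of the limit process: Definition~\ref{def1} is phrased as $h\to 0$ in $\R$ with the value of the difference quotient lying in the metric space $(\RI,d)$, whereas Theorem~\ref{thmlim} as stated is about sequences. To bridge this I would either restate the limit characterization sequentially (a limit over $h\to 0$ exists iff it exists along every sequence $h_n\to 0$, $h_n\neq 0$) and apply Theorem~\ref{thmlim} to each such sequence, or observe directly from Definition~\ref{dist} that $d\big(\tfrac{\ol f(x+h)-\ol f(x)}{h},\langle\alpha;\beta\rangle\big)^2$ equals the sum of the squares of the two scalar discrepancies, so the interval limit holds iff both scalar limits hold. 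A secondary point worth a remark is the use of $f_w>0$ (guaranteed because radii of genuine interval numbers are strictly positive, per Section~2), which ensures $\ln f_w$ is well defined and that the expression $(f_w(x+h)/f_w(x))^{1/h}$ makes sense and has the stated exponential form; without this the equivalence with differentiability of $\ln f_w$ would fail. Everything else is a routine unwinding of the definitions.
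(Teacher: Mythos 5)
Your proposal is correct and follows essentially the same route as the paper: unwind the difference quotient via the new subtraction and scalar multiplication to get $\left\langle \frac{f_c(x+h)-f_c(x)}{h};\left(\frac{f_w(x+h)}{f_w(x)}\right)^{1/h}\right\rangle$ and pass to the limit componentwise. The only difference is that you explicitly justify the componentwise limit (via Theorem~\ref{thmlim} or the distance formula) and note the role of $f_w>0$, both of which the paper leaves implicit; this is a welcome tightening rather than a new idea.
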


\begin{proof}By Definition \ref{def1},
\begin{eqnarray*}
% \nonumber % Remove numbering (before each equation)
\lim_{h\to 0}\frac{\overline f\left( {{x} }+h \right)-\overline f\left( {{x} } \right)}{h}
&=&\lim_{h\to 0}\frac{ \langle  f_c( {{x} }+h  ); f_w ( {{x} }+h  )\rangle-\langle f_c( {{x} }   ); f_w ( {{x} }  )\rangle}{h}\\
&=&\lim_{h\to 0} \left\langle \frac{f_c( {{x} }+h  )-f_c( {{x} }   )}{h}; \left(\frac{f_w ( x +h  )}{ f_w ( {{x} })  }\right)^{\frac1h}\right\rangle \\
&=& \left\langle {f}_c'(x_0);{f}_w^*(x_0) \right\rangle.
\end{eqnarray*}
This ends the proof.
\end{proof}

\begin{remark} Theorem \ref{thmdiff} implies that the derivative of an IVF $\ol f$ on $[a,b]$  equals to  the classical  derivative of the center function $f_c$, and the multiplicative derivative of the radius function $f_w$.
\end{remark}

\begin{definition}\label{primitive}
Let $\ol F, \ol f  :[a,b]\to \RI$, if
$$
\ol F'(x)=  \ol f(x) , ~~   x \in [a, b],
$$
then
$\ol F  $ is called a primitive of $\ol f$  on $[a,b]$.
\end{definition}

\begin{proposition}\label{prop4.5} Suppose that $\overline f,\overline g:[a,b]\to \RI$ are differentiable on $[a,b]$. Then for any $x\in[a,b]$,
\begin{itemize}
\item[(1)] ${{\left(c_1 \overline f+ c_2\overline g \right)}{'}}\left( {{x}} \right)=c_1{{\overline f}{'}}\left( {{x}} \right)+c_2{{\overline g} {'}}\left( {{x} } \right),$ where  $c_1,c_2\in\RI.$

\item[(2)] $(\overline f~\overline g )'(x)=\overline f'(x)\overline g(x)+\overline f(x)\overline g'(x).$

\item[(3)] $\left(\frac{~ \overline f ~}{\overline g}\right)'(x)=\displaystyle\frac{\overline f'(x)\overline g(x)-\overline f(x)\overline g'(x)}{\overline g^2(x)}.$
\end{itemize}

\end{proposition}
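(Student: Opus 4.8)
The plan is to reduce everything to the component characterization in Theorem~\ref{thmdiff}, which says that for $\ol f = \langle f_c; f_w\rangle$ one has $\ol f'(x) = \langle f_c'(x); f_w^*(x)\rangle$, where $f_w^* = \ee^{(\ln f_w)'}$ is the multiplicative derivative. Thus every claimed identity between IVFs becomes a pair of identities: one for the center (a statement about the ordinary derivative) and one for the radius (a statement about the multiplicative derivative, equivalently an ordinary-derivative statement after taking $\ln$). First I would rewrite the constants $c_1 = \langle (c_1)_c; (c_1)_w\rangle$, $c_2 = \langle (c_2)_c; (c_2)_w\rangle$, and recall from Definitions~\ref{d2}, \ref{d21}, \ref{subt}, \ref{d2222}, \ref{Division} the center/radius formulas for $+,\times,\div$ and scalar action, so that the center of each composite function is the familiar real expression and the radius is built multiplicatively from the radii.

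For part (1), the center of $c_1\ol f + c_2\ol g$ is $(c_1)_c f_c + (c_2)_c g_c$, whose derivative is $(c_1)_c f_c' + (c_2)_c g_c'$ by linearity of the classical derivative; the radius is $(f_w)^{(c_1)_c}\,(g_w)^{(c_2)_c}$, and applying $\ln$ turns this into $(c_1)_c\ln f_w + (c_2)_c\ln g_w$, whose derivative is $(c_1)_c(\ln f_w)' + (c_2)_c(\ln g_w)'$; exponentiating gives $\bigl(f_w^*\bigr)^{(c_1)_c}\bigl(g_w^*\bigr)^{(c_2)_c}$, which matches the radius of $c_1\ol f'(x) + c_2\ol g'(x)$. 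So both components agree and Theorem~\ref{thmdiff} closes the case. For part (2), the center of $\ol f\,\ol g$ is $f_c g_c$ with derivative $f_c' g_c + f_c g_c'$ (ordinary Leibniz rule), and the center of $\ol f'\ol g + \ol f\ol g'$ is exactly $f_c' g_c + f_c g_c'$; for the radius, $(\ol f\ol g)$ has radius $\ee^{\ln f_w \ln g_w}$, so its logarithm is $\ln f_w\,\ln g_w$, whose derivative is $(\ln f_w)'\ln g_w + \ln f_w(\ln g_w)'$ — and the radius of $\ol f'\ol g + \ol f\ol g'$ is, by the definitions of product (radii combine as $\ee^{\ln(\cdot)\ln(\cdot)}$) and addition (radii multiply), $\ee^{(\ln f_w)'\ln g_w}\cdot \ee^{\ln f_w(\ln g_w)'} = \ee^{(\ln f_w)'\ln g_w + \ln f_w(\ln g_w)'}$, which is the same. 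Part (3) is handled identically using the quotient rule for $f_c/g_c$ on the center, and for the radius one computes $\ln\bigl(\text{radius of }\ol f\div\ol g\bigr) = (\ln f_w)/(\ln g_w)$, differentiates by the ordinary quotient rule to get $\bigl((\ln f_w)'\ln g_w - \ln f_w(\ln g_w)'\bigr)/(\ln g_w)^2$, and checks this equals the logarithm of the radius of the right-hand side after unwinding the definitions of the interval product, difference, and squaring in the numerator and denominator; the appropriate non-degeneracy hypotheses ($g_c \neq 0$, $g_w \neq 1$, and these holding on a neighborhood so the quotient is differentiable) are inherited from Definition~\ref{Division}.

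The only real subtlety — the step I expect to be the main obstacle, though it is bookkeeping rather than anything deep — is verifying in part (3) that the interval-arithmetic expression $\ol f'\ol g - \ol f\ol g'$ in the numerator and $\ol g^2$ in the denominator, when fed through the nonstandard product and quotient of Definitions~\ref{d2222} and~\ref{Division}, really does reproduce $(\ln f_w)'/(\ln g_w) $'s derivative in the radius slot; because the radius of a product is $\ee^{\ln(\cdot)\ln(\cdot)}$ rather than an honest product, one must be careful that $\ln\bigl(\text{radius of }\ol g^2\bigr) = \ln^2 g_w$ (from \eqref{eq2-3}) and that the division contributes a ratio of these logarithms, so the radius of the whole right-hand side has logarithm $\bigl[\ln(\ln f_w^* \text{-radius part}) - \cdots\bigr]/\ln^2 g_w$, which after simplification is exactly the derivative of $(\ln f_w)/(\ln g_w)$. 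Once that identity is checked, Theorem~\ref{thmdiff} gives all three statements at once. Throughout, I would also note that differentiability of $\ol f$ and $\ol g$ on $[a,b]$ guarantees, via Theorem~\ref{thmdiff}, that $f_c, g_c$ are differentiable and $f_w, g_w$ are multiplicatively differentiable (hence $\ln f_w, \ln g_w$ are differentiable), so all the classical derivative rules invoked above are legitimate.
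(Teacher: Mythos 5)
Your overall strategy---reduce each identity to a pair of componentwise identities via Theorem~\ref{thmdiff}, with the center handled by the classical derivative rules and the radius handled by taking logarithms and applying the same rules---is exactly what the paper does for parts (2) and (3), and your componentwise treatment of (1) is an equally valid substitute for the paper's limit-based argument (which instead pulls $c_1,c_2$ through the difference quotient using the distributive laws). The radius bookkeeping you single out in part (3) is indeed the only delicate point, and your computation of it is correct: the numerator contributes $(\ln f_w)'\ln g_w-\ln f_w(\ln g_w)'$ in the exponent, the denominator contributes $\ln^2 g_w$ via \eqref{eq2-3}, and the division takes their ratio, matching the derivative of $\ln f_w/\ln g_w$.

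One slip to fix in part (1): since $c_1,c_2\in\RI$, the products $c_1\ol f$ and $c_2\ol g$ are \emph{interval} multiplications (Definition~\ref{d2222}), so the radius of $c_1\ol f+c_2\ol g$ is $\ee^{\ln (c_1)_w\ln f_w+\ln (c_2)_w\ln g_w}=f_w^{\ln (c_1)_w}\,g_w^{\ln (c_2)_w}$, not $f_w^{(c_1)_c}g_w^{(c_2)_c}$ as you wrote---the latter is the radius of the \emph{scalar} multiple $(c_1)_c\ol f+(c_2)_c\ol g$, a different function. The error is self-correcting: replacing the exponents $(c_1)_c,(c_2)_c$ by $\ln (c_1)_w,\ln (c_2)_w$ throughout, the same differentiation of the exponent and the same matching against the radius of $c_1\ol f'(x)+c_2\ol g'(x)$ (which is $\ee^{\ln (c_1)_w(\ln f_w)'+\ln (c_2)_w(\ln g_w)'}$) closes the case.
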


\begin{proof}(1)
\begin{eqnarray*}
% \nonumber % Remove numbering (before each equation)
{{\left(c_1 \overline f+ c_2\overline g \right)}{'}}\left( {{x} } \right)
&=&\lim_{h\to 0}\frac{(c_1 \overline f(x_0+h)+ c_2\overline g(x_0+h))-(c_1 \overline f(x_0)+ c_2\overline g(x_0))}{h}\\
&=&\lim_{h\to 0}\left(c_1 \frac{\overline f(x_0+h)-  \overline f(x_0) }{h}+c_2\frac{ \overline g(x_0+h)-  \overline g(x_0) }{h}\right)\\
&=&c_1{{\overline f}{'}}\left( {{x} } \right)+c_2{{\overline g} {'}}\left( {{x} } \right).
\end{eqnarray*}

(2)
\begin{eqnarray*}
% \nonumber % Remove numbering (before each equation)
{{\left(  \overline f(x) \overline g \right)}{'}}\left( {{x} } \right)
&=& \langle f_c(x)g_c(x);\ee^{\ln f_w(x)\ln  g_w(x)} \rangle'\\
&=&\langle f_c'(x)g_c(x)+ f_c(x_0)g_c'(x);\ee^{(\ln f_w)'(x)\ln g_w(x)+\ln f_w(x)(\ln g_w)'(x)} \rangle  \\
&=&\langle f_c'(x)g_c(x) ;\ee^{\ln f_w'(x)\ln g_w(x) } \rangle +\langle   f_c(x)g_c'(x); \ee^{\ln f_w(x)(\ln g_w)(x)} \rangle  \\
&=&\langle f_c'(x) ;f_w^*(x)  \rangle\langle g_c(x);  g_w(x)   \rangle +\langle   f_c(x);f_w(x)  \rangle\langle   g_c'(x);  g_w^*(x) \rangle
  \\
 &=&\overline f'(x)\overline g(x)+\overline f(x)\overline g'(x).
\end{eqnarray*}

(3) \begin{align*}
  \left(\frac{~\overline f~}{\overline g}\right)'(x)
  =&\lim_{h\to 0}\frac{\frac{\ol f(x+h)}{\ol g(x+h)}-\frac{\ol f(x)}{\ol g(x)}}{h}\\
  =&\lim_{h\to 0}\left\langle\frac{\frac{ f_c(x+h)}{ g_c(x+h)}-\frac{ f_c(x)}{ g_c(x)}}{h};\ee^{\frac{1}{h}\left(\frac{\ln f_w(x+h)}{\ln g_w(x+h)}-\frac{\ln f_w(x)}{\ln g_w(x)}\right)}\right\rangle\\
  =& \left\langle\frac{ { f_c'(x)}{ g_c(x)}- { f_c(x)}{ g_c'(x)}}{g_c^2(x)};\ee^{\frac{ {(\ln f_w)'(x)}{\ln g_w(x)}- {\ln f_w(x)}{(\ln g_w)'(x)}}{(\ln g_w(x))^2}}\right\rangle\\
  =& \frac{\left\langle{ { f_c'(x)}{ g_c (x)}- { f_c (x)}{ g_c'(x)}};\ee^{{ {(\ln f_w)'(x)}{\ln g_w(x)}- {\ln f_w(x)}{(\ln g_w)'(x)}}}\right\rangle}{\ol g^2(x)}\\
  =& \frac{\left\langle  f_c'(x) g_c(x);\ee^{(\ln f_w)'(x)\ln g_w(x)}\right\rangle-\left\langle f_c(x) g_c'(x);\ee^{\ln f_w(x)(\ln g_w)'(x)}\right\rangle}{\ol g^2(x)}\\
  =&\frac{\overline f'(x)\overline g(x)-\overline f(x)\overline g'(x)}{\overline g(x)^2}.
\end{align*}

The proof is therefore  ended.
\end{proof}

\begin{corollary} Suppose that $\overline f :[a,b]\to \RI$ are differentiable on $[a,b]$, and that $g :[a,b]\to \R $ is differentiable on $[a,b]$. Then for any $x\in[a,b]$,
 \begin{enumerate}
\item[(1)] ${{\left(c_1 \overline f+ c_2  g \right)}{'}}\left( {{x}} \right)=c_1{{\overline f}{'}}\left( {{x}} \right)+c_2{{  g} {'}}\left( {{x} } \right),$ where  $c_1,c_2\in\RI.$

\item[(2)] $(\overline f~  g )'(x)=\overline f'(x)  g(x)+\overline f(x)  g'(x).$

\item[(3)] $\left(\frac{~ \overline f ~}{  g}\right)'(x)=\displaystyle\frac{\overline f'(x)  g(x)-\overline f(x)  g'(x)}{  g^2(x)}.$

\item[(4)]  $\left(\frac{~  g ~}{  \overline f}\right)'(x)=\displaystyle\frac{g'(x)  \overline f(x)- g(x)  \overline f'(x)}{ \overline f^2(x)}.$
  \end{enumerate}
\end{corollary}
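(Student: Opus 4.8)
The plan is to deduce all four identities from Proposition \ref{prop4.5} by identifying the real-valued function $g$ with the interval-valued function $\overline{g}:[a,b]\to\RI$ defined by $\overline{g}(x):=\overline{1}\,g(x)=\langle g(x);\ee^{g(x)}\rangle$, exactly as prescribed by the scalar embedding $\lambda\mapsto\overline{\lambda}=\lambda\overline{1}$ and by Definition \ref{def32222}. Once $g$ is replaced by $\overline{g}$, every term occurring in (1)--(4) becomes an expression among interval-valued functions, so the hybrid sum, product, and quotient rules already established in Proposition \ref{prop4.5} apply verbatim; it then only remains to translate the outcome back into the mixed notation of Definition \ref{def32222}.

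The single computation that must be carried out is the compatibility of this embedding with differentiation and with the algebraic operations. By Theorem \ref{thmdiff}, the IVF $\overline{g}$ has center function $g$, whose classical derivative is $g'$, and radius function $\ee^{g}$, whose multiplicative derivative is $\ee^{(\ln\ee^{g})'}=\ee^{g'}$; hence
$$\overline{g}'(x)=\langle g'(x);\ee^{g'(x)}\rangle=\overline{1}\,g'(x),$$
so that $\overline{g}'$ is again the embedded image of the real derivative $g'$. Similarly, by the $n$th-power formula, $\overline{g}^{2}(x)=\langle g(x)^2;\ee^{\ln^2\ee^{g(x)}}\rangle=\langle g(x)^2;\ee^{g(x)^2}\rangle=\overline{1}\,g^2(x)$, i.e.\ the square of $\overline{g}$ is the embedded image of $g^2$. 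Consequently, for any IVF $\overline{h}$ and any $\star\in\{+,-,\times,\div\}$, Definition \ref{def32222} yields $\overline{h}(x)\star g(x)=\overline{h}(x)\star\overline{g}(x)$, $\overline{h}(x)\star g'(x)=\overline{h}(x)\star\overline{g}'(x)$, and $\overline{f}(x)\div g^2(x)=\overline{f}(x)\div\overline{g}^{2}(x)$, with the symmetric statements when the real factor sits on the left.

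With these identifications, (1) is Proposition \ref{prop4.5}(1) applied to $\overline{f}$ and $\overline{g}$, together with $c_2\overline{g}'(x)=c_2\,g'(x)$; (2) is Proposition \ref{prop4.5}(2) applied to $\overline{f}$ and $\overline{g}$, after rewriting $\overline{f}'(x)\overline{g}(x)=\overline{f}'(x)\,g(x)$ and $\overline{f}(x)\overline{g}'(x)=\overline{f}(x)\,g'(x)$; (3) is Proposition \ref{prop4.5}(3) applied to $\overline{f}$ and $\overline{g}$, valid at every $x$ with $g(x)\neq0$ (precisely the condition making $\overline{g}(x)$ and $\overline{g}^{2}(x)$ satisfy the requirements of Definition \ref{Division}), with $\overline{g}^{2}(x)$ rewritten as $g^2(x)$; and (4) is Proposition \ref{prop4.5}(3) applied with the two functions interchanged, i.e.\ to $\overline{g}$ in the numerator and $\overline{f}$ in the denominator, valid at every $x$ with $f_c(x)\neq0$ and $f_w(x)\neq1$ so that $\overline{f}(x)$ is invertible. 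The argument carries no genuine difficulty; the only point demanding attention is the bookkeeping of the second paragraph --- confirming that the embedding $g\mapsto\overline{1}\,g$ commutes with $(\cdot)'$, with $(\cdot)^2$, and with the four operations --- together with the side conditions $g\neq0$ on $[a,b]$ in part (3) and $f_c\neq0$, $f_w\neq1$ on $[a,b]$ in part (4), which are implicit hypotheses and should be stated explicitly.
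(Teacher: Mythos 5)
Your proposal is correct and follows exactly the route the paper takes: the paper's proof consists of the single sentence that the corollary is a direct consequence of Proposition \ref{prop4.5} and Definition \ref{def32222}, and your argument simply fills in the details of that reduction (verifying that the embedding $g\mapsto\overline{1}\,g$ commutes with differentiation, squaring, and the four operations, and noting the nondegeneracy conditions needed for the division cases). No discrepancy to report.
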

\begin{proof}
This is a direct conclusion of Proposition \ref{prop4.5} and Definition \ref{def32222}.
\end{proof}

\begin{theorem}  Suppose that $\overline f:[a,b]\to \RI$ is differentiable on $[a,b]$. Then for any $x\in[a,b]$,
    $${{\left(\overline  f\circ g \right)}{'}}\left( {{x}} \right)={{ \overline f }{'}}\left( {{y}} \right){{g}{'}}\left( {{x}} \right),$$  where $g:[a,b]\to   [a,b]$ is differentiable, and ${{y}}=g\left( {{x}} \right)$.
\end{theorem}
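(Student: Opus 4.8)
The plan is to reduce the identity to its two scalar components by means of Theorem~\ref{thmdiff}, and then apply the ordinary chain rule to the center function and a multiplicative chain rule to the radius function. Write $y=g(x)$; since $g$ maps $[a,b]$ into $[a,b]$, the point $y$ lies in the domain on which $\overline f$ is differentiable, and the composite $\overline f\circ g$ is the IVF with center function $f_c\circ g$ and radius function $f_w\circ g$. From Theorem~\ref{thmdiff}, differentiability of $\overline f$ means $f_c$ is differentiable and $f_w$ is multiplicatively differentiable on $[a,b]$; the classical chain rule then gives that $f_c\circ g$ is differentiable at $x$ with $(f_c\circ g)'(x)=f_c'(y)\,g'(x)$.

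Next I would establish the multiplicative chain rule for the radius: $f_w\circ g$ is multiplicatively differentiable at $x$ with $(f_w\circ g)^*(x)=\bigl(f_w^*(y)\bigr)^{g'(x)}$. This is immediate from \eqref{mulder}: since $f_w>0$ one has $\ln(f_w\circ g)=(\ln f_w)\circ g$, so the ordinary chain rule yields $\bigl(\ln(f_w\circ g)\bigr)'(x)=(\ln f_w)'(y)\,g'(x)$, whence $(f_w\circ g)^*(x)=\exp\!\bigl((\ln f_w)'(y)\,g'(x)\bigr)=\bigl(\ee^{(\ln f_w)'(y)}\bigr)^{g'(x)}=\bigl(f_w^*(y)\bigr)^{g'(x)}$. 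Feeding both components back into Theorem~\ref{thmdiff} shows $\overline f\circ g$ is differentiable at $x$ with
$$(\overline f\circ g)'(x)=\Bigl\langle f_c'(y)\,g'(x);\ \bigl(f_w^*(y)\bigr)^{g'(x)}\Bigr\rangle .$$
Finally I would identify this interval with $\overline f'(y)\,g'(x)$: by Theorem~\ref{thmdiff} we have $\overline f'(y)=\langle f_c'(y);f_w^*(y)\rangle$, and by the scalar-multiplication rule of Definition~\ref{d21}, $g'(x)\,\overline f'(y)=\langle g'(x)f_c'(y);\,(f_w^*(y))^{g'(x)}\rangle$, which is exactly the displayed interval. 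This completes the argument, uniformly in the sign (and in the vanishing) of $g'(x)$, so no case analysis of switching points is required.

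I do not expect a serious obstacle here: the statement is genuinely the concatenation of the classical chain rule with its multiplicative counterpart, glued together by Theorem~\ref{thmdiff}. The one point deserving attention is the interplay between the exponent $g'(x)$ produced by the multiplicative chain rule and the ``power'' form $k\langle a_c;a_w\rangle=\langle ka_c;a_w^k\rangle$ of scalar multiplication in Definition~\ref{d21}; once this matching is checked, the two expressions coincide verbatim. An alternative would be to argue straight from Definition~\ref{def1} by writing the difference quotient $\frac{(\overline f\circ g)(x+h)-(\overline f\circ g)(x)}{h}$ and passing to the limit coordinatewise, but that route simply re-derives the same two classical facts and is less transparent.
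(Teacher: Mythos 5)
Your proposal is correct and is in substance the same argument as the paper's: the paper writes the difference quotient from Definition \ref{def1} and passes to the limit coordinatewise, which amounts to exactly the classical chain rule on $f_c\circ g$ and the multiplicative chain rule on $f_w\circ g$ that you carry out via Theorem \ref{thmdiff}, followed by the same identification of $\left\langle f_c'(y)g'(x);\,(f_w^*(y))^{g'(x)}\right\rangle$ with the scalar product $\overline f'(y)\,g'(x)$ from Definition \ref{d21}. Your version is, if anything, slightly more explicit about the final matching step, which the paper leaves implicit.
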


\begin{proof}
 \begin{align*}
 \left(\overline  f\circ g \right)'\left( x \right)=&\lim_{h\to 0}\frac{\ol f(g(x+h))-\ol f(g(x))}{h}\\
 =&\lim_{h\to 0}\left\langle\frac{ f_c(g(x+h))- f_c(g(x))}{h};\left(\frac{f_w(g(x+h))}{f_w(g(x))}\right)^{\frac1h}\right\rangle\\
=&\left\langle f_c'(y)g'(x);\ee^{\frac{f_w'(y)}{f_w (y)} g'(x)}\right\rangle\\
 =&{{ \overline f }{'}}\left( {{y}} \right){{g}{'}}\left( {{x}} \right).
\end{align*}

This ends the proof.
\end{proof}

  \begin{theorem}\label{thmdiff-cont}
Let $\ol f:[a,b]\to \RI$ be   an interval-valued function.  If $\ol f$ is differentiable on $[a,b]$,  then   $\ol f$ is continuous on $[a,b]$.
\end{theorem}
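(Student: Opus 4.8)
The plan is to reduce the claim to two classical facts via the characterization of differentiability established in Theorem~\ref{thmdiff}. Recall that $\ol f$ is differentiable at $x$ if and only if the center function $f_c$ is (classically) differentiable at $x$ and the radius function $f_w$ is multiplicatively differentiable at $x$, and that by \eqref{mulder} multiplicative differentiability of $f_w$ is equivalent to ordinary differentiability of $\ln f_w$. Moreover, by Theorem~\ref{thmcont}, $\ol f$ is continuous on $[a,b]$ if and only if $f_l$ and $f_r$ are continuous, which (taking sums and differences) is equivalent to continuity of $f_c$ and $f_w$, and since $f_w>0$, equivalent to continuity of $f_c$ and $\ln f_w$.

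So the first step is to fix $x\in[a,b]$ and invoke Theorem~\ref{thmdiff} to obtain that $f_c$ is differentiable at $x$ and $\ln f_w$ is differentiable at $x$. The second step is to apply the standard real-analysis result that a function differentiable at a point is continuous at that point: hence $f_c$ is continuous at $x$ and $\ln f_w$ is continuous at $x$. The third step is to pass back through Theorem~\ref{thmcont}: since $x\in[a,b]$ was arbitrary, $f_c$ and $\ln f_w$ are continuous on $[a,b]$, hence so are $f_w=\ee^{\ln f_w}$ (composition with the continuous exponential) and therefore $f_l=f_c-f_w$ and $f_r=f_c+f_w$; by Theorem~\ref{thmcont} this gives continuity of $\ol f$ on $[a,b]$.

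Alternatively, and perhaps more cleanly, one can argue directly with the distance $d$ of Definition~\ref{dist}: differentiability of $\ol f$ at $x$ means $\frac{\ol f(x+h)-\ol f(x)}{h}\to \ol f'(x)$, so $\ol f(x+h)-\ol f(x)=h\cdot\frac{\ol f(x+h)-\ol f(x)}{h}\to \ol 0$ as $h\to 0$ (using Proposition~\ref{p32}(2), $d(h\ol u,h\ol v)=|h|\,d(\ol u,\ol v)$, together with boundedness of the difference quotient near $0$); then $d(\ol f(x+h),\ol f(x))=\|\ol f(x+h)-\ol f(x)\|\to 0$, which is exactly continuity of $\ol f$ at $x$ in the sense of Definition~\ref{CIdef}. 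Either route is routine. There is no real obstacle here; the only mild care needed is the observation that a convergent difference quotient is bounded on a punctured neighbourhood of $0$, so that multiplying by $h\to 0$ forces the numerator to $\ol 0$ — exactly as in the classical scalar proof.
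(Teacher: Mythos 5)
Your first argument is correct and is essentially the paper's own proof: both reduce the claim via Theorem~\ref{thmdiff} to differentiability of $f_c$ and $\ln f_w$, invoke the classical fact that differentiability implies continuity, and return to $\ol f$ through $f_l=f_c-f_w$, $f_r=f_c+f_w$ and Theorem~\ref{thmcont}. Your alternative metric argument (using $d(\ol a,\ol b)=\|\ol a-\ol b\|$, the homogeneity $d(h\ol u,h\ol v)=|h|\,d(\ol u,\ol v)$, and boundedness of the difference quotient) is also valid and is in fact more direct, but the main route already settles the matter in the same way the paper does.
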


\begin{proof}   Since  $\ol f $ is differentiable on $[a,b]$, then by Theorem \ref{thmdiff},
 $$\overline f'\left( x \right)=\left\langle {f}_c'(x);{f}_w^*(x) \right\rangle.$$
 From  \eqref{mulder},  it follows that
 $${f}_w^*(x) =\ee^{(\ln f_w)'(x)}=\ee^{\frac{f_w'(x)}{f_w(x)}},$$
which implies that $f_w $ is  differentiable on $[a,b]$.
Hence, $f_l=f_c-f_w$ and $f_r=f_c+f_w$ are also  differentiable on $[a,b]$.
Thus,   $f_l$ and $f_r$  are continuous on $[a,b]$, by Theorem \ref{thmcont}, so is $\ol f$.
\end{proof}

\subsection{Comparison with the  $gH$-Derivative}
Now we discuss the relationship between the new derivative and the $gH$-derivative.

\begin{theorem}\label{diff-ghdiff}
 Let $\ol f:[a,b]\to \RI$ be an IVF. If $\ol f$ is continuous on $[a,b]$, then $\ol f $ is differentiable on $[a,b]$ if and only if  $\ol f $ is $gH$-differentiable on $[a,b]$.
\end{theorem}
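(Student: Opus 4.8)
My plan is to route both notions of differentiability through the pair of real functions $f_c$ and $f_w$, and to show each is equivalent to ordinary differentiability of $f_c$ together with ordinary differentiability of $f_w$. First I would record that, since $\ol f$ is continuous on $[a,b]$, Theorem~\ref{thmcont} gives that $f_w$ is continuous, and as a radius it is strictly positive. By Theorem~\ref{thmdiff} and \eqref{mulder}, $\ol f$ is differentiable at $x$ in the new sense exactly when $f_c$ is differentiable at $x$ and $\ln f_w$ is differentiable at $x$; since $f_w>0$, the chain rule makes the latter equivalent to ordinary differentiability of $f_w$ at $x$. Hence $\ol f$ is differentiable on $[a,b]$ if and only if both $f_c$ and $f_w$ are classically differentiable on $[a,b]$.

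Next I would compute the $gH$-difference quotient explicitly. Using \eqref{eq4-113} and the classical scalar multiplication \eqref{eq3-11},
\[
\frac{1}{h}\bigl(\ol f(x+h)\ominus_{gH}\ol f(x)\bigr)=\left\langle \frac{f_c(x+h)-f_c(x)}{h};\ \left|\frac{f_w(x+h)-f_w(x)}{h}\right|\right\rangle,
\]
so by Theorem~\ref{thmlim} the $gH$-derivative at $x$ exists if and only if $\tfrac{f_c(x+h)-f_c(x)}{h}$ and $\bigl|\tfrac{f_w(x+h)-f_w(x)}{h}\bigr|$ both converge as $h\to0$. The implication ``differentiable $\Rightarrow$ $gH$-differentiable'' is then immediate from the first paragraph: differentiability of $f_c$ and $f_w$ forces both coordinate limits to exist, and one reads off $\ol f'_{gH}(x)=\langle f_c'(x);|f_w'(x)|\rangle$. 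For the converse, $gH$-differentiability on $[a,b]$ forces the center difference quotient to converge at every point, i.e.\ $f_c$ is differentiable on $[a,b]$, and forces $\bigl|\tfrac{f_w(x+h)-f_w(x)}{h}\bigr|$ to converge at every $x\in[a,b]$.

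The hard part will be upgrading the pointwise convergence of $\bigl|\tfrac{f_w(x+h)-f_w(x)}{h}\bigr|$ to genuine differentiability of $f_w$ on $[a,b]$: a priori a ``switching point'' could occur where the one-sided derivatives of $f_w$ exist but are opposite in sign — precisely the mechanism that produces the two types of $gH$-derivative — so that the unsigned quotient converges while $f_w$ fails to be differentiable there. Here I would exploit that the $gH$-derivative is assumed to exist at \emph{every} point of $[a,b]$ together with the continuity of $\ol f$ (hence of $f_w$), invoking the characterization of $gH$-differentiable functions from \cite{SB09,Chalco2013} to rule out such points; the outcome I aim for is that the endpoint functions $f_l=f_c-f_w$ and $f_r=f_c+f_w$ are differentiable on $[a,b]$, whence $f_c$ and $f_w$ are differentiable and, by the first paragraph, $\ol f$ is differentiable in the new sense. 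Combining the two implications finishes the proof.
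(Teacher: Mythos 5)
Your first paragraph and your necessity direction are sound and essentially reproduce the paper's own argument: by Theorem~\ref{thmdiff} and \eqref{mulder}, the new derivative exists at $x$ iff $f_c$ and $\ln f_w$ are differentiable there, which (since $f_w>0$) is equivalent to classical differentiability of $f_c$ and $f_w$, hence of $f_l$ and $f_r$, hence $gH$-differentiability via \cite[Theorem 17]{SB09}. Your explicit formula for the $gH$-difference quotient, $\left\langle \frac{f_c(x+h)-f_c(x)}{h};\ \left|\frac{f_w(x+h)-f_w(x)}{h}\right|\right\rangle$, is also correct.

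The gap is in the converse, and it is not merely an argument you have postponed: the step you flag as ``the hard part'' cannot be carried out, because the switching points you hope to rule out genuinely occur under the stated hypotheses. Take the paper's own Example~\ref{e6}, $\ol f(x)=[-|x|,\,|x|+1]$, so $f_c\equiv\tfrac12$ and $f_w(x)=|x|+\tfrac12$. For every $h\neq 0$ the $gH$-difference quotient at $0$ equals $\left\langle 0;\ \left|\,|h|/h\,\right|\right\rangle=[-1,1]$, so $\ol f$ is $gH$-differentiable at $0$ (the example's claim to the contrary is itself an error), and indeed $\ol f$ is continuous and $gH$-differentiable on all of $\R$; yet $f_w$ is not differentiable at $0$, the unsigned quotient converges while the signed one does not, and $\ol f$ is not differentiable in the new sense at $0$. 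So continuity together with $gH$-differentiability at \emph{every} point does not exclude points where the one-sided derivatives of $f_w$ are opposite in sign; the characterizations in \cite{SB09,Chalco2013} assert precisely that such points are admissible, so no appeal to them can close your argument. (For what it is worth, the paper's own proof does not close it either: it proves only necessity and declares sufficiency ``obvious according to the necessity,'' and the sufficiency direction of the theorem as stated appears to be false.)
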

\begin{proof} (Necessity) Since  $\ol f $ is differentiable on $[a,b]$,  it follows from  the proof of  Theorem \ref{thmdiff-cont} that
$f_l=f_c-f_w$ and $f_r=f_c+f_w$ are also  differentiable on $[a,b]$.
Thus, by \cite[Theorem 17]{SB09}, $\ol f $ is $gH$-differentiable on $[a,b]$.

  (Sufficiency) The sufficiency is obvious according to the necessity.
\end{proof}

\begin{example}\label{ex4}
Let us consider the function $\ol f:\mathbb{R}\rightarrow \RI$,
$$\ol f(x)=\left\{\begin{array}{ll}
{[0,1]}\Big(1-x^2\sin\frac{1}{x}\Big),\quad &\text{if}~x\not=0;\\
{[0,1]},&\text{otherwise}.
\end{array}\right.$$
Since
\begin{align*}
\ol f_{gH}'(x)=\lim\limits_{h\rightarrow0}\frac{\ol f(h)\ominus_{gH}\ol f(0)}{h}
=&\lim\limits_{h\rightarrow0}\frac{{[0,1]}\Big(1-h^2\sin\frac{1}{h}\Big)\ominus_{gH}[0,1]}{h}\\
=&\lim\limits_{h\rightarrow0}{\left[\min\left\{0,-h\sin\frac1h\right\}, \max\left\{0,-h\sin\frac1h\right\}\right]}\\
=&0,
\end{align*}
$\ol f$ is differentiable at $x_0=0$ and $\ol f_{gH}'(0)=0$.
But, \begin{align*}
 \ol f'(x)=\lim\limits_{h\rightarrow0}\frac{\ol f(h) -\ol f(0)}{h}
=&\lim\limits_{h\rightarrow0} \left\langle\frac {-h\sin\frac1h}{2};\left(\frac12\right)^{{(-h\sin \frac1h)}}\right\rangle\\
=&\langle0;1\rangle\\
=&[-1,1].
\end{align*}

\end{example}

\begin{example}\label{e6}
Let us consider the function $f:\mathbb{R}\rightarrow \RI$,
$$f(x)=[-|x|, |x|+1].$$
Since
\begin{align*}
\lim\limits_{h\rightarrow0}\frac{\ol f(h)-\ol f(0)}{h}
=&\lim\limits_{h\rightarrow0 }\frac{\langle \frac12,|h|+\frac12\rangle-\langle\frac12;\frac12 \rangle}{h}\\
=&\lim\limits_{h\rightarrow0 }{\langle 0,\left(2|h|+1\right)^\frac1h\rangle }\\
=&\begin{cases}
 \langle0,\ee^{2}\rangle, & \text{ if } h>0,\\
\langle0,\ee^{-2}\rangle,  & \text{ if } h<0,
\end{cases},
\end{align*}
so, $\ol f$ is not differentiable at $x_0=0$.
 However, $f_l$ and $f_r$ are both not differentiable at $x=0$, so $\ol f$ is neither  not $gH$-differentiable   at $x=0$.
\end{example}

\begin{remark}
  Theorem \ref{diff-ghdiff} is not true, if the condition that $\ol f$ is continuous on $[a,b]$ is omitted, see the  counterexample \ref{e5}.
\end{remark}
\begin{example}\label{e5} \cite{QD}
Consider the IVF $f:\mathbb{R}\rightarrow \RI$ as follows,
$$\ol f(x)=\left\{\begin{array}{ll}
{[-1,x^2+x]}\,\,\,\,\,\,\text{if}~x\in \mathbb{Q};\\
{[x-1,x^2]},\,\,\,\,\,\,\,\,\text{if}~x\in \mathbb{R}\setminus\mathbb{Q}.
\end{array}\right.$$
If $h\in \mathbb{Q}$, then
\begin{align*}
\lim\limits_{h\rightarrow0}\frac{\ol f(h)-\ol f(0)}{h}
=&\lim\limits_{h\rightarrow0}\frac{{[-1,h^2+h]}-[-1,0]}{h}\\
=&\lim\limits_{h\rightarrow0}\left\langle\frac{h+1}{2};(h^2+h+1)^{\frac1h}\right\rangle\\
=&\left\langle\frac12;\ee\right\rangle.
\end{align*}
If $h\in \mathbb{R}\setminus\mathbb{Q}$, then
\begin{align*}
\lim\limits_{h\rightarrow0}\frac{\ol f(h)-\ol f(0)}{h}
=&\lim\limits_{h\rightarrow0}\frac{{[h-1,h^2]}-[-1,0]}{h}\\
=&\lim\limits_{h\rightarrow0}\left\langle\frac{h+1}{2};(h^2-h+1)^{\frac1h}\right\rangle\\
=&\left\langle\frac12;\ee^{-1}\right\rangle.
\end{align*}
Hence, $f$ is not differentiable at $x_0=0$.

On the other hand, $\ol f$ is gH-differentiable at $x_0=0$ and $\ol f'_{gH}(0)=[0,1]$.
\end{example}

\begin{remark}
  It is worth noting that when calculating the $gH$-derivative, the issue of the function's switching points needs to be considered, see \cite{SB09}. However, the new derivative proposed in Definition \ref{def1} does not have this issue, thus the complexity of the calculation is significantly reduced.
\end{remark}

\begin{example}\label{ex415} Considering the IVF $\ol f(x) =\left[ \frac12 x^2, 1 + \frac12 x^2 + 2\sin^2 x\right],$ $x\in[0,2\pi]$. If we calculate the $gH$-derivative $\ol f'_{gH}(x)=[ f'_{l~gH}(x), f'_{r~gH}(x)]$, it has three switching points  $x_1 = \frac12 \pi$,
  $x_2 = \pi$  and $x_3 = \frac32 \pi$.
  However, $$\ol f'(x)=[ f'_{l}(x), f'_{r}(x)]=\left[x+\sin 2x-\ee^{\frac{2\sin 2x}{2-\cos 2x}},x+\sin 2x+\ee^{\frac{2\sin 2x}{2-\cos 2x}}\right],\quad x\in[0,2\pi],$$ see Figure \ref{fig415}.

\begin{figure}[htp]
  \centering
\subfigure[{$\ol f(x)$}]{
\includegraphics[width=0.47\linewidth]{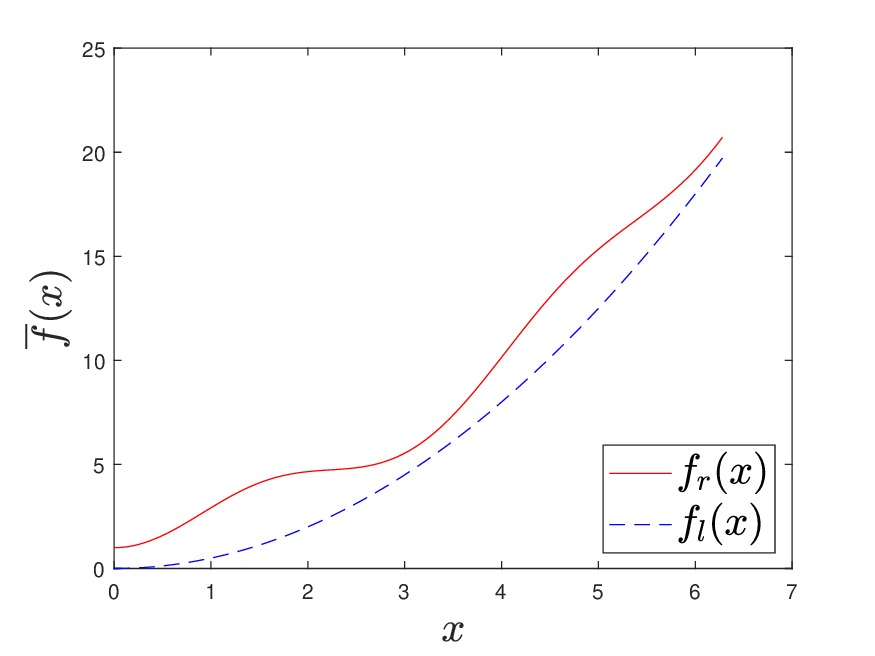}
\label{fig:sub1}
}
  \subfigure[{$\ol f'_{gH}(x)$ and $\ol f'(x)$}]{
\includegraphics[width=0.47\linewidth]{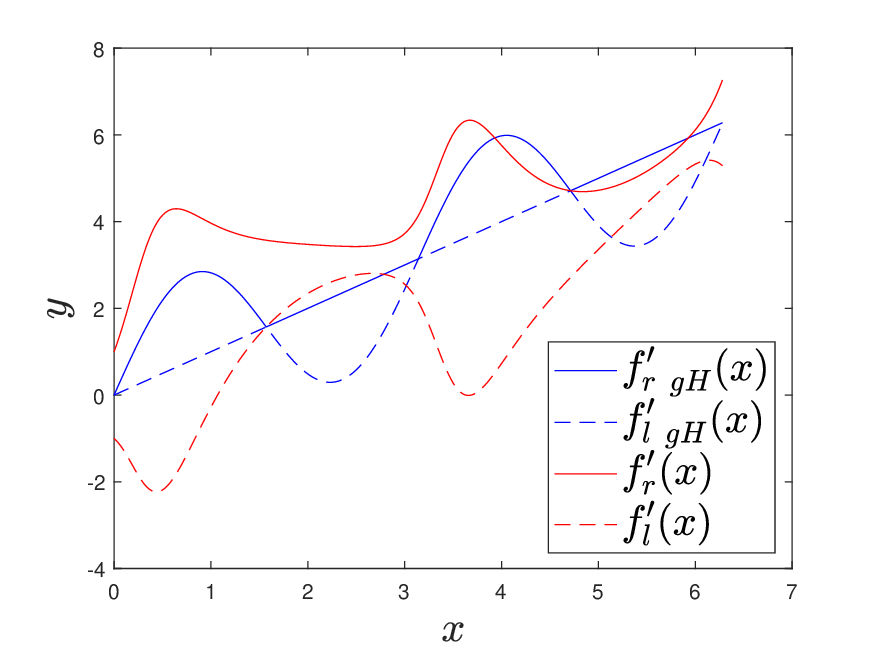}
\label{fig:sub2}
}
   \caption{The IVF $\ol f(x)$ in Example \ref{ex415} and its derivatives}\label{fig415}
\end{figure}
\end{example}

 \section{Integration of IVFs}
In this subsection, we will introduce the concept of Riemann type integrals of interval-valued functions and its properties.

\subsection{Basic Definition and Properties}

\begin{definition}
 Let $\overline f:\left[ a,b \right]\to {{\mathbb{R}}_{\mathcal{I}}},\ol A\in {{\mathbb{R}}_{\mathcal{I}}}$. If for arbitrary $\varepsilon >0$, there exists a constant $\delta >0$ such that for any division $P$ of $\left[ a,b \right]$ given by
 $$a={{t}_{0}}<{{t}_{1}}<{{t}_{2}}<\cdots <{{t}_{n-1}}<{{t}_{n}}=b ,\text{
 and }(\xi_1,\xi_2,...,\xi_n)$$ satisfies $t_{i-1}\leq \xi_i\leq t_i$
 and $t_i-t_{i-1}<\delta $ for all $i$, one has
$$d\left(\sum_{i=1}^{n} {\overline f\left( {{\xi }_{i}} \right)\left( {{t}_{i}}-{{t}_{i-1}} \right)},\ol A \right)<\varepsilon.$$
Then  $\overline  f$ is said to be interval Riemann integrable to $\ol A$ on $\left[ a,b \right]$,  and denoted by $$\ol A=\left( IR \right)\int_{a}^{b}{\overline f\left( t \right) }\dd t.$$
\end{definition}
Denote by $\mathcal{IR}([a,b],\RI)$ the space of all  interval Riemann integrable IVFs $\ol f$ on $[a,b]$. Also,  denote by $\mathcal{R}([a,b],\R)$ the space of all  Riemann integrable funcitons $ f$ on $[a,b]$.

\begin{theorem}\label{int-1}
Suppose that $\overline  f:\left[ a,b \right]\to {{\mathbb{R}}_{\mathcal{I}}} $ with $\overline  f\left( t \right)=\left\langle  {f}_c\left( t \right), {f}_w\left( t \right) \right\rangle,t\in \left[ a,b \right] $, then $\overline  f\in \mathcal{I}\mathcal{R}\left( \left[ a,b \right] ,\RI\right)$ if and only if $ {f}_c,\ln{f}_w\in \mathcal{R}\left( \left[ a,b \right],\R  \right)$. Moreover,
 $$\left( IR \right)\int_{a}^{b}{\overline f\left( t \right) }\dd t=\left\langle \left( R \right)\int_{a}^{b}{ {f}_c\left( t \right) }\dd t;\left( R \right)\int_{a}^{b}{ {f}_w\left( t \right)}^{\dd t} \right\rangle, $$
 where $\left( R \right)\int_{a}^{b}{ {f}_w\left( t \right)}^{\dd t}   $ is the multiplication integral of   ${f}_w$ on $[a,b]$  (see details in \cite{BKO08}), i.e.,
 \begin{eqnarray}\label{MI}
   ( R  )\int_{a}^{b}{ {f}_w\left( t \right)}^{\dd t} =\ee^{( R  ) \int_{a}^{b}(\ln f_w)(t)\dd t}.
 \end{eqnarray}
\end{theorem}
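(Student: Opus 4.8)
The plan is to reduce the statement about interval Riemann integrability to two simultaneous statements about ordinary Riemann integrability of real-valued functions, exploiting the structure of the new arithmetic. Recall from Definition~\ref{d2} and Definition~\ref{d21} that a Riemann sum $\sum_{i=1}^{n}\overline f(\xi_i)(t_i-t_{i-1})$ has center $\sum_{i=1}^{n} f_c(\xi_i)(t_i-t_{i-1})$ and radius $\prod_{i=1}^{n} f_w(\xi_i)^{\,t_i-t_{i-1}}$, by the formula $\sum_i k_i\overline a_i=\langle\sum_i k_i (a_i)_c;\prod_i (a_i)_w^{k_i}\rangle$ already established. Taking logarithms, the logarithm of the radius of the Riemann sum equals $\sum_{i=1}^{n}(\ln f_w)(\xi_i)(t_i-t_{i-1})$, which is precisely an ordinary Riemann sum of the real function $\ln f_w$.

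First I would invoke the definition of the distance $d$ (Definition~\ref{dist}): writing $S_P$ for the interval Riemann sum over a tagged partition $P$, and $\overline A=\langle A_c;A_w\rangle$, we have
$$
d(S_P,\overline A)=\sqrt{\Bigl(\textstyle\sum_i f_c(\xi_i)(t_i-t_{i-1})-A_c\Bigr)^2+\Bigl(\textstyle\sum_i (\ln f_w)(\xi_i)(t_i-t_{i-1})-\ln A_w\Bigr)^2}.
$$
From the elementary inequalities $\max\{|u|,|v|\}\le\sqrt{u^2+v^2}\le |u|+|v|$, the condition $d(S_P,\overline A)<\varepsilon$ for all fine enough $P$ is equivalent to the conjunction of: the center sums converge to $A_c$, and the $\ln f_w$-sums converge to $\ln A_w$. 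For the necessity direction, choosing $A_c=(R)\int_a^b f_c\,\mathrm dt$ and $\ln A_w=(R)\int_a^b(\ln f_w)\,\mathrm dt$ (these exist by hypothesis $f_c,\ln f_w\in\mathcal R([a,b],\R)$), pick $\delta$ so that each of the two real Riemann sums is within $\varepsilon/\sqrt2$ of its integral; then $d(S_P,\overline A)<\varepsilon$. For sufficiency, if $\overline f\in\mathcal{IR}$ with limit $\overline A$, then taking $u=0$ or $v=0$ in the inequality above shows both real Riemann-sum families are Cauchy/convergent, so $f_c$ and $\ln f_w$ are Riemann integrable with integrals $A_c$ and $\ln A_w$ respectively. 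Finally, unwinding the notation, the radius of $\overline A$ is $A_w=\ee^{(R)\int_a^b(\ln f_w)(t)\,\mathrm dt}$, which by the definition \eqref{MI} of the multiplicative integral is exactly $(R)\int_a^b f_w(t)^{\,\mathrm dt}$, giving the displayed formula for $(IR)\int_a^b\overline f(t)\,\mathrm dt$.

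I do not expect a serious obstacle here; the argument is essentially a change of variables $f_w\mapsto\ln f_w$ together with the equivalence of the Euclidean and $\ell^\infty$/$\ell^1$ norms on $\R^2$. The one point requiring mild care is bookkeeping with the two tolerances $\varepsilon/\sqrt2$ versus $\varepsilon$ and ensuring a single $\delta$ works for both real functions simultaneously (take the minimum of the two $\delta$'s), and noting that the partition-mesh condition $t_i-t_{i-1}<\delta$ is the same for the interval sums and for the two scalar sums, so no mismatch of partitions arises. Uniqueness of $\overline A$ follows from uniqueness of the two real integrals together with the fact, established earlier, that an interval number is determined by its center and radius.
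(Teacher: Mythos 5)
Your proof is correct and follows essentially the same route as the paper's: both decompose the interval Riemann sum into its center (an ordinary Riemann sum of $f_c$) and the logarithm of its radius (an ordinary Riemann sum of $\ln f_w$), and then pass to the limit using the metric $d$. You are in fact somewhat more thorough than the paper, which gives only the formal limit computation and leaves the two directions of the equivalence and the $\varepsilon$--$\delta$ bookkeeping implicit.
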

\begin{proof}By Definition \ref{def1}, one has
\begin{eqnarray*}
% \nonumber % Remove numbering (before each equation)
\left( IR \right)\int_{a}^{b}{\overline f\left( t \right) }\dd t
&=&\lim_{\delta\to 0}\sum\limits_{i=1}^{n}{\overline f\left( {{\xi }_{i}} \right)\left( {{t}_{i}}-{{t}_{i-1}} \right)}\\ &=&\lim_{\delta\to 0}\sum\limits_{i=1}^{n}{\langle f_c\left( {{\xi }_{i}} \right);f_w(\xi_i)\rangle\left( {{t}_{i}}-{{t}_{i-1}} \right)}\\
&=&\lim_{\delta\to 0}{\left\langle \sum\limits_{i=1}^{n} f_c\left( {{\xi }_{i}} \right)\left( {{t}_{i}}-{{t}_{i-1}} \right);\prod_{i=1}^n f_w(\xi_i)^{\left( {{t}_{i}}-{{t}_{i-1}} \right)}\right\rangle}\\
&=&\lim_{\delta\to 0}{\left\langle \sum\limits_{i=1}^{n} f_c\left( {{\xi }_{i}} \right)\left( {{t}_{i}}-{{t}_{i-1}} \right);\ee^{\sum\limits_{i=1}^n \ln f_w(\xi_i) {\left( {{t}_{i}}-{{t}_{i-1}} \right)}}\right\rangle}\\
&=&\left\langle \left( R \right)\int_{a}^{b}{ {f}_c\left( t \right) }\dd t;\ee^{( R  ) \int_{a}^{b}(\ln f_w)(t)\dd t}\right\rangle,\\
&=&\left\langle \left( R \right)\int_{a}^{b}{ {f}_c\left( t \right) }\dd t;\left( R \right)\int_{a}^{b}{ {f}_w\left( t \right)}^{\dd t} \right\rangle.
\end{eqnarray*}
This ends the proof.
\end{proof}

\begin{theorem}\label{int-2}
Suppose that $\overline  f\in  C(\left[ a,b \right], {{\mathbb{R}}_{\mathcal{I}}}) $, then $\overline  f\in \mathcal{I}\mathcal{R}\left( \left[ a,b \right] ,\RI\right)$.
\end{theorem}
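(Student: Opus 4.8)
The plan is to reduce the statement to the endpoint-function characterization already established in Theorem~\ref{int-1}, combined with the standard fact that continuous real-valued functions on a compact interval are Riemann integrable. Concretely, suppose $\ol f\in C([a,b],\RI)$ with $\ol f(t)=\langle f_c(t);f_w(t)\rangle$. By Theorem~\ref{thmcont}, the continuity of $\ol f$ is equivalent to the continuity of the endpoint functions $f_l$ and $f_r$ on $[a,b]$, and hence of $f_c=\tfrac{f_l+f_r}{2}$ and $f_w=\tfrac{f_r-f_l}{2}$ on $[a,b]$. Since $f_w(t)>0$ for every $t$ (it is a radius function) and $f_w$ is continuous on the compact set $[a,b]$, it attains a positive minimum there; therefore $\ln f_w$ is a well-defined composition of continuous functions and is itself continuous on $[a,b]$.

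The next step is to invoke the classical result that a function continuous on a closed bounded interval is Riemann integrable on that interval. Applying this to $f_c$ and to $\ln f_w$ gives $f_c\in\mathcal{R}([a,b],\R)$ and $\ln f_w\in\mathcal{R}([a,b],\R)$. By Theorem~\ref{int-1}, the condition $f_c,\ln f_w\in\mathcal{R}([a,b],\R)$ is exactly what is needed for $\ol f\in\mathcal{IR}([a,b],\RI)$, and moreover it yields the explicit formula
$$
(IR)\int_a^b \ol f(t)\,\dd t=\left\langle (R)\int_a^b f_c(t)\,\dd t;\ \ee^{(R)\int_a^b (\ln f_w)(t)\,\dd t}\right\rangle .
$$
Thus the proof is essentially a two-line deduction: continuity of $\ol f$ $\Rightarrow$ continuity (hence Riemann integrability) of $f_c$ and $\ln f_w$ $\Rightarrow$ interval Riemann integrability of $\ol f$ via Theorem~\ref{int-1}.

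The only point requiring genuine care — and what I would flag as the main obstacle, though it is a minor one — is ensuring that $\ln f_w$ is actually continuous, i.e., that $f_w$ stays bounded away from $0$. This is where the paper's standing convention that interval numbers are nondegenerate ($a_l<a_r$, so $a_w>0$) together with compactness of $[a,b]$ is used: without compactness one could have $\inf_t f_w(t)=0$ and lose integrability of $\ln f_w$. Since the domain is a closed bounded interval, this is automatic, and no further estimates are needed. I would therefore present the proof as: by Theorem~\ref{thmcont}, $f_c$ and $f_w$ are continuous on $[a,b]$; since $f_w>0$ and $[a,b]$ is compact, $\ln f_w$ is continuous on $[a,b]$; hence $f_c,\ln f_w\in\mathcal{R}([a,b],\R)$; the conclusion then follows from Theorem~\ref{int-1}.
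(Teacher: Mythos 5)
Your proposal is correct and follows exactly the paper's own route: Theorem~\ref{thmcont} gives continuity of $f_c$ and $f_w$, hence Riemann integrability of $f_c$ and $\ln f_w$, and Theorem~\ref{int-1} concludes. The only difference is that you spell out why $\ln f_w$ is continuous (positivity of the radius plus compactness of $[a,b]$), a detail the paper's proof passes over silently but which is a worthwhile addition rather than a deviation.
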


\begin{proof} Since $\overline  f\in  C(\left[ a,b \right], {{\mathbb{R}}_{\mathcal{I}}}) $, by Theorem \ref{thmcont},  $f_l$ and $f_r$ are continuous on $[a, b]$, so are $f_c$ and $f_w$. Hence, $f_c$ and $\ln f_w$ are integrable on $[a,b]$. From Theorem \ref{int-1}, it follows that
$\overline  f\in \mathcal{I}\mathcal{R}\left( \left[ a,b \right] ,\RI\right)$.
\end{proof}

The interval Riemann integral has the following properties.

\begin{proposition} \label{proprop}

   (1) {\bf  (Monotonicity)}.  Suppose that $\overline f,\overline g\in \mathcal{I}\mathcal{R}\left( \left[ a,b \right],\RI \right)$. If $\overline f\left( t \right)\leq \overline g\left( t \right) $ for any $ t\in \left[ a,b \right] $,  then
 $$ \left( IR \right)\int_{a}^{b}{\overline f\left( t \right) }\dd t\leq \left( IR \right)\int_{a}^{b}{\overline g\left( t \right) }\dd t. $$

 (2)  {\bf (Linearity)}. Suppose that  $\overline f,\overline g\in \mathcal{I}\mathcal{R}\left( \left[ a,b \right] ,\RI \right),\lambda_1,\lambda_2 \in \RI$, then
  $\lambda_1\overline f+\lambda_2\overline g\in \mathcal{I}\mathcal{R}\left( \left[ a,b \right] ,\RI\right)$, and
 $$ \left( IR \right)\int_{a}^{b}{\left[\lambda_1 \overline f\left( t \right)\ +\lambda_2 \overline g\left( t \right) \right]}\dd t=\lambda_1\left( IR \right)\int_{a}^{b}{\overline f\left( t \right) }\dd t+\lambda_2\left( IR \right)\int_{a}^{b}{\overline g\left( t \right) }\dd t. $$

 (3)  {\bf (Subinterval Integrability)}. If IVF $\overline f\in \mathcal{I}\mathcal{R}\left( \left[ a,b \right] ,\RI\right)$, then  so
 it is on a subinterval
  $ \left[ c,d \right]\subseteq \left[ a,b \right] $.

 (4) {\bf (Additivity)}. Suppose that $\overline f\in \mathcal{I}\mathcal{R}\left( \left[ a,b \right],\RI \right)$, then for any   $c\in \left[ a,b \right]$,  it holds
 $$\left( IR \right)\int_{a}^{b}{\overline f\left( t \right) }\dd t=\left( IR \right)\int_{a}^{c}{\overline f\left( t \right) }\dd t+\left( IR \right)\int_{c}^{b}{\overline f\left( t \right) }\dd t. $$
\end{proposition}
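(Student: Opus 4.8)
The plan is to reduce every one of the four assertions to the corresponding classical property of the real Riemann integral via the representation furnished by Theorem \ref{int-1}: $\overline f\in\mathcal{IR}([a,b],\RI)$ precisely when $f_c,\ln f_w\in\mathcal{R}([a,b],\R)$, and in that case $(IR)\int_a^b\overline f\,\dd t=\big\langle(R)\int_a^b f_c\,\dd t;\ee^{(R)\int_a^b(\ln f_w)\,\dd t}\big\rangle$. Since interval addition, interval (and scalar) multiplication, the ordering $``\le"$, and the splitting of $[a,b]$ all act componentwise on the pair $(f_c,\ln f_w)$, each claim collapses to the analogous fact for $f_c$ together with the analogous fact for $\ln f_w$, plus a re-reading through Definitions \ref{d2} and \ref{d2222}.

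For \textbf{Monotonicity}: from $\overline f(t)\le\overline g(t)$ and Definition \ref{TOrder} one first extracts $f_c(t)\le g_c(t)$ for every $t$, hence $(R)\int_a^b f_c\le(R)\int_a^b g_c$. If this is strict, the center of $(IR)\int_a^b\overline f$ is strictly smaller and the conclusion is immediate from Definition \ref{TOrder}. If it is an equality, then $g_c-f_c\ge0$ is Riemann integrable with vanishing integral, so the set $\{f_c\ne g_c\}$ is null; at each of the remaining points the order $\overline f(t)\le\overline g(t)$ forces $f_w(t)\le g_w(t)$, i.e.\ $\ln f_w\le\ln g_w$ almost everywhere, whence $(R)\int_a^b\ln f_w\le(R)\int_a^b\ln g_w$ and therefore $\ee^{\int\ln f_w}\le\ee^{\int\ln g_w}$. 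In either case $(IR)\int_a^b\overline f\le(IR)\int_a^b\overline g$.

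For \textbf{Linearity}: using Definitions \ref{d2222} and \ref{d2} one computes $\lambda_1\overline f(t)+\lambda_2\overline g(t)=\big\langle(\lambda_1)_c f_c(t)+(\lambda_2)_c g_c(t);\ee^{\ln(\lambda_1)_w\ln f_w(t)+\ln(\lambda_2)_w\ln g_w(t)}\big\rangle$, so its center is a real-linear combination of $f_c,g_c$ and the logarithm of its radius is a real-linear combination of $\ln f_w,\ln g_w$; both therefore lie in $\mathcal{R}([a,b],\R)$, and Theorem \ref{int-1} yields integrability of $\lambda_1\overline f+\lambda_2\overline g$. Applying linearity of the real Riemann integral to each component and re-reading the result through Definitions \ref{d2222}, \ref{d2} and Theorem \ref{int-1} identifies it with $\lambda_1(IR)\int_a^b\overline f+\lambda_2(IR)\int_a^b\overline g$. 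For \textbf{Subinterval Integrability}, $f_c,\ln f_w\in\mathcal{R}([a,b],\R)$ restrict to elements of $\mathcal{R}([c,d],\R)$ whenever $[c,d]\subseteq[a,b]$, so Theorem \ref{int-1} applied on $[c,d]$ finishes it. For \textbf{Additivity}, apply the classical identity $\int_a^b=\int_a^c+\int_c^b$ separately to $f_c$ and to $\ln f_w$, observe that $\ee^{\int_a^c\ln f_w+\int_c^b\ln f_w}$ is precisely the radius of the sum of $\langle\int_a^c f_c;\ee^{\int_a^c\ln f_w}\rangle$ and $\langle\int_c^b f_c;\ee^{\int_c^b\ln f_w}\rangle$ by Definition \ref{d2}, and conclude via Theorem \ref{int-1}.

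The only step that is not pure bookkeeping is the equality case in Monotonicity: one must pass from the pointwise lexicographic order on interval numbers to an almost-everywhere inequality between the radius logarithms, which relies on the classical facts that a non-negative Riemann integrable function with zero integral vanishes off a null set and that an a.e.\ inequality between Riemann integrable functions survives integration. Everything else is a direct translation of the classical Riemann theory applied to the pair $(f_c,\ln f_w)$ through Theorem \ref{int-1}.
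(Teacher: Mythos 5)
Your proposal is correct and follows the same overall strategy as the paper: reduce everything to the pair $(f_c,\ln f_w)$ via Theorem \ref{int-1} and invoke the corresponding classical facts; parts (3) and (4) coincide with the paper's argument essentially verbatim. The genuine difference is in part (1). The paper's proof splits into three \emph{uniform} cases --- $\ol f(t)=\ol g(t)$ for all $t$, or $f_c(t)<g_c(t)$ for all $t$, or $f_c(t)=g_c(t)$ and $f_w(t)<g_w(t)$ for all $t$ --- which does not exhaust the hypothesis, since the pointwise lexicographic order $\ol f(t)\leq\ol g(t)$ may realize different alternatives at different points (e.g.\ strict center inequality on part of $[a,b]$ and equal centers elsewhere). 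Your argument closes this gap: you first integrate $f_c\leq g_c$, dispose of the strict case immediately, and in the equality case use that a non-negative Riemann integrable function with vanishing integral is zero off a null set to deduce $\ln f_w\leq\ln g_w$ almost everywhere, which still integrates correctly. This costs you two mildly measure-theoretic facts about the Riemann integral but buys a proof that actually covers the stated hypothesis. A second, smaller improvement: in part (2) you correctly expand $\lambda_1\ol f+\lambda_2\ol g$ with $\lambda_1,\lambda_2\in\RI$ acting by the interval multiplication of Definition \ref{d2222}, so the center carries coefficients $(\lambda_1)_c,(\lambda_2)_c$ and the log-radius carries $\ln(\lambda_1)_w,\ln(\lambda_2)_w$, whereas the paper's displayed computation writes $\lambda_1 f_c+\lambda_2 g_c$ and $\ee^{\lambda_1\ln f_w+\lambda_2\ln g_w}$ as if the $\lambda_i$ were real scalars; your reading is the one consistent with the statement.
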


\begin{proof} (1) By Definition \ref{TOrder}, there are three cases in ``$\leq$".

(I) If  $\overline f\left( t \right)= \overline g\left( t \right) $, $t\in[a,b]$,  then it is easy to see that
 $$ \left( IR \right)\int_{a}^{b}{\overline f\left( t \right) }\dd t= \left( IR \right)\int_{a}^{b}{\overline g\left( t \right) }\dd t. $$

(II)  If  $ f_c\left( t \right)<  g_c\left( t \right) $, $t\in[a,b]$,  then one has
 $$ \left( R \right)\int_{a}^{b}{  f_c\left( t \right) }\dd t<\left( R \right)\int_{a}^{b}{  g_c\left( t \right) }\dd t, $$
that is,
 $$ \left( IR \right)\int_{a}^{b}{\overline f\left( t \right) }\dd t\leq \left( IR \right)\int_{a}^{b}{\overline g\left( t \right) }\dd t. $$

(III)  If $ f_c\left( t \right)=  g_c\left( t \right) $,  $ f_w\left( t \right)<  g_w\left( t \right) $, $t\in[a,b]$,  then we have
 $$ \left( R \right)\int_{a}^{b}{ \ln f_w\left( t \right) }\dd t<\left( R \right)\int_{a}^{b}{ \ln g_w\left( t \right) }\dd t, $$
by \eqref{MI}, it holds
 $$ \left( IR \right)\int_{a}^{b}{\overline f\left( t \right) }\dd t\leq \left( IR \right)\int_{a}^{b}{\overline g\left( t \right) }\dd t. $$

(2) Since  $\overline f,\overline g\in \mathcal{I}\mathcal{R}\left( \left[ a,b \right] ,\RI \right) $, by Lemma \ref{int-1}, the functions $f_c,\ln f_w,g_c,\ln g_w \in \mathcal{R}\left( \left[ a,b \right] ,\RI \right),$ so are $\lambda_1  f_c+\lambda_2  g_c$ and $\lambda_1  \ln f_w+\lambda_2  \ln g_w$. Then
$$\lambda_1\overline f+\lambda_2\overline g=\left\langle \lambda_1  f_c+\lambda_2  g_c;  \ee^{\lambda_1  \ln f_w+\lambda_2  \ln g_w}\right\rangle\in \mathcal{I}\mathcal{R}\left( \left[ a,b \right] ,\RI \right).$$
Moreover,
\begin{align*}
&\left( IR \right)\int_{a}^{b}{\left[\lambda_1 \overline f\left( t \right)\ +\lambda_2 \overline g\left( t \right) \right]}\dd t\\
=&(IR)\int_{a}^{b}\left\langle \lambda_1  f_c(t)+\lambda_2  g_c(t);  \ee^{\lambda_1  \ln f_w(t)+\lambda_2  \ln g_w(t)}\right\rangle\dd t\\
=&\left\langle( R)\int_{a}^{b} \lambda_1  f_c(t)+\lambda_2  g_c(t)\dd t;  \ee^{( R)\int_{a}^{b}\lambda_1  \ln f_w(t)+\lambda_2  \ln g_w(t) \dd t}\right\rangle\\
=&\left\langle  \lambda_1( R)\int_{a}^{b}  f_c(t)\dd t+\lambda_2 ( R)\int_{a}^{b} g_c(t)\dd t;  \ee^{\lambda_1  ( R)\int_{a}^{b}\ln f_w(t)\dd t+\lambda_2 ( R)\int_{a}^{b} \ln g_w(t) \dd t}\right\rangle\\
=&\left\langle  \lambda_1( R)\int_{a}^{b}  f_c(t)\dd t;  \ee^{\lambda_1  ( R)\int_{a}^{b}\ln f_w(t)\dd t}\right\rangle+
            \left\langle  \lambda_2 ( R)\int_{a}^{b} g_c(t)\dd t;  \ee^{\lambda_2 ( R)\int_{a}^{b} \ln g_w(t) \dd t}\right\rangle\\
=&\lambda_1\left( IR \right)\int_{a}^{b}{\overline f\left( t \right) }\dd t+\lambda_2\left( IR \right)\int_{a}^{b}{\overline g\left( t \right) }\dd t.
\end{align*}

(3) Since  $\overline f\in \mathcal{I}\mathcal{R}\left( \left[ a,b \right] ,\RI \right) $, by Lemma \ref{int-1}, the functions $f_c,\ln f_w \in \mathcal{R}\left( \left[ a,b \right] ,\RI \right).$ Hence,  for any $[c,d]\subseteq [a,b]$, the functions $f_c,\ln f_w \in \mathcal{R}\left( \left[ c, d \right] ,\RI \right)$, thereby,  $\overline f\in \mathcal{I}\mathcal{R}\left( \left[ c,d \right] ,\RI \right) $.

(4) Since  $\overline f\in \mathcal{I}\mathcal{R}\left( \left[ a,b \right] ,\RI \right) $, by Lemma \ref{int-1}, the functions $f_c,\ln f_w \in \mathcal{R}\left( \left[ a,b \right] ,\RI \right).$ Hence,  for any $c\in [a,b]$, the functions $f_c,\ln f_w \in \mathcal{R}\left( \left[ a, c \right] ,\RI \right)$, and also $f_c,\ln f_w \in \mathcal{R}\left( \left[ c, b \right] ,\RI \right)$, thereby,
\begin{align*}&\left( IR \right)\int_{a}^{b}{\overline f\left( t \right) }\dd t\\
=&
\left\langle(R)\int_{a}^{b}f_c(t)\dd t; \ee^{(R)\int_{a}^{b}\ln f_w(t)\dd t}\right\rangle\\
=&
\left\langle(R)\int_{a}^{c}f_c(t)\dd t+(R)\int_{c}^{b}f_c(t)\dd t; \ee^{(R)\int_{a}^{c}\ln f_w(t)\dd t+(R)\int_{c}^{b}\ln f_w(t)\dd t}\right\rangle\\
=&
\left\langle(R)\int_{a}^{c}f_c(t)\dd t; \ee^{(R)\int_{a}^{c}\ln f_w(t)\dd t}\right\rangle+\left\langle(R)\int_{c}^{b}f_c(t)\dd t; \ee^{(R)\int_{c}^{b}\ln f_w(t)\dd t}\right\rangle\\
=&\left( IR \right)\int_{a}^{c}{\overline f\left( t \right) }\dd t+\left( IR \right)\int_{c}^{b}{\overline f\left( t \right) }\dd t.
\end{align*}
This ends the proof.
\end{proof}

By  Proposition \ref{proprop} and Definition \ref{def32222}, we have
\begin{corollary}  Suppose that  $\overline f \in \mathcal{I}\mathcal{R}\left( \left[ a,b \right] ,\RI \right) $ and $  g\in \mathcal{R}\left( \left[ a,b \right] ,\R  \right),$
  $\lambda_1,\lambda_2 \in \RI$, then
  $\lambda_1\overline f+\lambda_2  g\in \mathcal{I}\mathcal{R}\left( \left[ a,b \right] ,\RI\right)$, and
 $$ \left( IR \right)\int_{a}^{b}{\left[\lambda_1 \overline f\left( t \right)\ +\lambda_2   g\left( t \right) \right]}\dd t=\lambda_1\left( IR \right)\int_{a}^{b}{\overline f\left( t \right) }\dd t+\lambda_2\left( R\right)\int_{a}^{b}{  g\left( t \right) }\dd t. $$
\end{corollary}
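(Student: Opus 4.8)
The plan is to reduce the statement to the linearity assertion already proved in Proposition~\ref{proprop}(2), using Definition~\ref{def32222} to view the scalar function $g$ as an interval-valued function. Concretely, I would set $\ol g(t):=\ol 1\,g(t)=\langle g(t);\ee^{g(t)}\rangle$ for $t\in[a,b]$, so that the center function of $\ol g$ is $g$ and the logarithm of its radius function is again $g$. Since $g\in\mathcal R([a,b],\R)$, both $g$ and $\ln\bigl(\ee^{g}\bigr)=g$ belong to $\mathcal R([a,b],\R)$, so Theorem~\ref{int-1} gives $\ol g\in\mathcal{I}\mathcal{R}([a,b],\RI)$ together with
$$
(IR)\int_a^b \ol g(t)\,\dd t=\left\langle (R)\int_a^b g(t)\,\dd t;\ee^{(R)\int_a^b g(t)\,\dd t}\right\rangle=\ol 1\,(R)\int_a^b g(t)\,\dd t .
$$

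Next I would invoke Proposition~\ref{proprop}(2) with the pair $\ol f,\ol g\in\mathcal{I}\mathcal{R}([a,b],\RI)$ and the interval scalars $\lambda_1,\lambda_2\in\RI$. This yields at once that $\lambda_1\ol f+\lambda_2\ol g\in\mathcal{I}\mathcal{R}([a,b],\RI)$, which by Definition~\ref{def32222} is precisely the claimed membership $\lambda_1\ol f+\lambda_2 g\in\mathcal{I}\mathcal{R}([a,b],\RI)$, and moreover
$$
(IR)\int_a^b\bigl[\lambda_1\ol f(t)+\lambda_2\ol g(t)\bigr]\,\dd t=\lambda_1(IR)\int_a^b \ol f(t)\,\dd t+\lambda_2(IR)\int_a^b \ol g(t)\,\dd t .
$$
Substituting the value of $(IR)\int_a^b\ol g(t)\,\dd t$ from the first step, and applying Definition~\ref{def32222} once more to rewrite $\lambda_2\bigl(\ol 1\,(R)\int_a^b g(t)\,\dd t\bigr)$ as $\lambda_2(R)\int_a^b g(t)\,\dd t$ and $\lambda_1\ol f(t)+\lambda_2\ol g(t)$ as $\lambda_1\ol f(t)+\lambda_2 g(t)$, produces the desired identity.

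The only point that requires attention is notational bookkeeping: one must check that the shorthand $\lambda_2 g$ in the statement is to be read, per Definition~\ref{def32222}, as $\lambda_2(\ol 1\,g)$, and that the embedding $\R\hookrightarrow\RI$, $\lambda\mapsto\ol 1\,\lambda$, intertwines the real Riemann integral with the interval Riemann integral — both of which follow immediately from Theorem~\ref{int-1}. There is no genuine analytic obstacle; the corollary is essentially a specialization of Proposition~\ref{proprop}(2) obtained by passing $g$ through this embedding.
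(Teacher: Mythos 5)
Your proposal is correct and follows essentially the same route as the paper, which derives this corollary directly from Proposition~5.4(2) (linearity) together with Definition~3.4's embedding $g\mapsto\ol 1\,g$; you have merely filled in the details the paper leaves implicit, namely that $\ol 1\,g=\langle g;\ee^{g}\rangle$ is interval Riemann integrable by Theorem~5.2 with $(IR)\int_a^b \ol 1\,g(t)\,\dd t=\ol 1\,(R)\int_a^b g(t)\,\dd t$. No gaps.
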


\subsection{Fundamental Theorem of Calculus}

\begin{theorem}\label{FTC-1}
Let $\ol f \in \mathcal{IR}([a,b],\RI)$, if $\ol F:[a,b]\to \RI$ is given by
$$\ol F(t)=(IR)\int_{a}^{t} \ol f(s)\dd s, ~~ t\in [a,b],$$
then
$\ol F  $ is continuous on $[a,b]$ and $\ol  F'=\ol f$  at all points
of continuity of $\ol f$.
\end{theorem}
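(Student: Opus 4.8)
The plan is to reduce the statement to the classical fundamental theorem of calculus applied separately to the center function and to the logarithm of the radius function. First I would invoke Theorem \ref{int-1}: since $\ol f \in \mathcal{IR}([a,b],\RI)$, the real functions $f_c$ and $\ln f_w$ lie in $\mathcal{R}([a,b],\R)$ (hence, by Proposition \ref{proprop}(3), also in $\mathcal{R}([a,t],\R)$ for every $t$), and writing $\ol F(t)=\langle F_c(t);F_w(t)\rangle$ we obtain $F_c(t)=(R)\int_a^t f_c(s)\dd s$ and $\ln F_w(t)=(R)\int_a^t (\ln f_w)(s)\dd s$, the latter because the multiplicative integral in \eqref{MI} is the exponential of the ordinary integral of $\ln f_w$.

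Next I would handle continuity. Since $f_c$ is Riemann integrable, $F_c$ is continuous on $[a,b]$; likewise $G(t):=(R)\int_a^t (\ln f_w)(s)\dd s$ is continuous on $[a,b]$, so $F_w=\ee^{G}$ is continuous and strictly positive there. The (pointwise content of the) proof of Theorem \ref{thmcont} then shows that continuity of $F_c$ and of $\ln F_w=G$ forces $\ol F$ to be continuous on $[a,b]$.

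For the derivative, fix a point $x$ of continuity of $\ol f$. Running the estimate in the proof of Theorem \ref{thmcont} at the single point $x$ shows that $\ol f$ is continuous at $x$ if and only if $f_c$ and $\ln f_w$ are both continuous at $x$. At such a point the classical fundamental theorem of calculus gives $F_c'(x)=f_c(x)$ and $G'(x)=(\ln f_w)(x)$, i.e. $(\ln F_w)'(x)=(\ln f_w)(x)$, so by \eqref{mulder} the multiplicative derivative satisfies $F_w^*(x)=\ee^{(\ln F_w)'(x)}=\ee^{(\ln f_w)(x)}=f_w(x)$. Applying Theorem \ref{thmdiff}, $\ol F$ is differentiable at $x$ with
$$\ol F'(x)=\langle F_c'(x);F_w^*(x)\rangle=\langle f_c(x);f_w(x)\rangle=\ol f(x),$$
which is the claim.

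The argument is essentially a bookkeeping reduction to the scalar theory, so I do not expect a genuine obstacle; the only point that deserves care is the equivalence ``$\ol f$ continuous at $x$ $\iff$ $f_c,\ln f_w$ continuous at $x$'', since Theorem \ref{thmcont} is phrased for continuity on all of $[a,b]$. I would either extract this localized version from that proof (which is already pointwise in nature) or record it as a brief preliminary remark before the main argument.
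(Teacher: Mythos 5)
Your proposal is correct, and for the differentiation step it takes a genuinely different --- and in fact more careful --- route than the paper. The paper proves continuity by the same direct distance estimate that your componentwise argument encodes: since $d$ is exactly the Euclidean metric in the coordinates $(F_c,\ln F_w)$, showing $d(\ol F(t_1),\ol F(t_2))\to 0$ and showing $F_c$, $\ln F_w$ continuous are interchangeable, so that part is essentially the same. For the derivative, however, the paper computes $\ol F'(x)=\lim_{h\to0}\frac{1}{h}(IR)\int_{x}^{x+h}\ol f(s)\,\dd s=\lim_{h\to 0}\ol f(\xi)=\ol f(x)$, i.e.\ it invokes a mean value theorem for the interval integral: the existence of a single $\xi\in[x,x+h]$ at which $\ol f(\xi)$ equals the averaged integral. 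No such theorem is established in the paper, and for interval-valued integrands it would require one $\xi$ to work simultaneously for the average of $f_c$ and the average of $\ln f_w$, which is not available in general. Your reduction to the classical scalar fundamental theorem of calculus for $f_c$ and $\ln f_w$, followed by Theorem \ref{thmdiff} and the identity $F_w^*(x)=\ee^{(\ln F_w)'(x)}=f_w(x)$, avoids this issue entirely; the only extra ingredient is the pointwise equivalence between continuity of $\ol f$ at $x$ and continuity of $f_c$ and $\ln f_w$ at $x$, which, as you note, is immediate from the definition of $d$ even though Theorem \ref{thmcont} is stated globally. Your version buys rigor at the cost of a short preliminary remark; the paper's version is shorter but rests on an unproved interval mean value step.
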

\begin{proof} Since  $\overline f\in \mathcal{I}\mathcal{R}\left( \left[ a,b \right] ,\RI \right) $,  and
$$
\ol F(t)=(IR)\int_{a}^{t} \ol f(s) \dd s , ~~   t \in [a, b],
$$
then   for any $t_1,t_2\in[a,b]$,
\begin{align*}
  d(\ol F(t_1),\ol F(t_2))=&d\left((IR)\int_{a}^{t_1} \ol f(s) \dd s,(IR)\int_{a}^{t_2} \ol f(s) \dd s\right)\\
    =&\sqrt{\left((R)\int_{t_2}^{t_1}   f_c(s) \dd s \right)^2+\left((R)\int_{t_2}^{t_1}  \ln f_w(s) \dd s \right)^2}\\
    \to& 0, \text{ as } t_1\to t_2,
\end{align*}
so
$\ol F  $ is continuous on $[a,b]$.

Moreover,  for any point $x$
of continuity of $\ol f$ and $x+h\in[a,b]$,
\begin{align*}
 \ol  F'(x)=& \lim_{h\to0}\frac{\ol F(x+h)-\ol F(x )}{h}\\
 =& \lim_{h\to0}\frac{(IR)\int_{x}^{x+h} \ol f(s) \dd s}{h} \\
    =& \lim_{h\to0}\ol f(\xi)\\
    =&\ol f(x).
\end{align*}
This ends the proof.
\end{proof}

\begin{theorem} \label{tnb}
Assume that $\ol f\in C([a,b], \RI)$. If $\ol F$ is  a primitive of $\ol f$ on $[a,b]$, then
\begin{equation} \label{nb}
\ol F(b) - \ol F(a)=(IR)\int_{a}^{b}\ol f  (t)\dd t.
\end{equation}
\end{theorem}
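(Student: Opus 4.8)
The plan is to reduce the statement to the classical Newton--Leibniz formula by passing to the center and radius functions. First I would observe that since $\ol f\in C([a,b],\RI)$, Theorem \ref{thmcont} gives that $f_l,f_r$, and hence $f_c$ and $f_w$, are continuous on $[a,b]$; in particular $\ln f_w$ is continuous on $[a,b]$, so both $f_c$ and $\ln f_w$ are Riemann integrable there.

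Next, writing $\ol F(x)=\langle F_c(x);F_w(x)\rangle$, the hypothesis that $\ol F$ is a primitive of $\ol f$, i.e. $\ol F'(x)=\ol f(x)$ for all $x\in[a,b]$, combined with Theorem \ref{thmdiff}, yields $F_c'(x)=f_c(x)$ and $F_w^*(x)=f_w(x)$; by \eqref{mulder} the latter is equivalent to $(\ln F_w)'(x)=\ln f_w(x)$. Thus $F_c$ is a classical primitive of the continuous function $f_c$, and $\ln F_w$ is a classical primitive of the continuous function $\ln f_w$, on $[a,b]$. Applying the classical fundamental theorem of calculus in each of these two real scalar situations gives
$$F_c(b)-F_c(a)=(R)\int_a^b f_c(t)\,\dd t,\qquad \ln F_w(b)-\ln F_w(a)=(R)\int_a^b \ln f_w(t)\,\dd t.$$

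I would then compute $\ol F(b)-\ol F(a)$ directly from Definition \ref{subt}, obtaining $\big\langle F_c(b)-F_c(a);\,F_w(b)/F_w(a)\big\rangle$, and rewrite the radius as $F_w(b)/F_w(a)=\ee^{\ln F_w(b)-\ln F_w(a)}=\ee^{(R)\int_a^b \ln f_w(t)\,\dd t}$. Comparing with the expression for $(IR)\int_a^b \ol f(t)\,\dd t$ supplied by Theorem \ref{int-1}, namely $\big\langle (R)\int_a^b f_c(t)\,\dd t;\,\ee^{(R)\int_a^b \ln f_w(t)\,\dd t}\big\rangle$, shows that the two interval numbers have the same center and the same radius, hence coincide by the equivalence of (1)--(3) in the Proposition characterizing equality in $\RI$.

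The argument is essentially a dictionary translation between interval calculus and scalar calculus, so I do not anticipate a serious obstacle; the only point requiring a little care is the identification $F_w^*=f_w \Leftrightarrow (\ln F_w)'=\ln f_w$ together with the remark that the resulting real primitives $F_c$ and $\ln F_w$ have continuous derivatives, so that the scalar Newton--Leibniz formula genuinely applies — but this is immediate from the continuity of $f_c$ and $\ln f_w$ established in the first step. One could alternatively argue directly from the Riemann-sum definition of the integral, but the center/radius reduction is cleaner and reuses Theorems \ref{thmdiff} and \ref{int-1}.
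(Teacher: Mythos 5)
Your proof is correct, but it takes a genuinely different route from the paper's. The paper invokes Theorem \ref{FTC-1} to produce the particular primitive $t\mapsto(IR)\int_a^t\ol f(s)\dd s$, asserts that any two primitives of $\ol f$ differ by a constant $\ol C\in\RI$, and then evaluates at $t=a$ and $t=b$. You instead decompose everything into center and radius components: from $\ol F'=\ol f$ and Theorem \ref{thmdiff} you extract $F_c'=f_c$ and $(\ln F_w)'=\ln f_w$, apply the classical Newton--Leibniz formula to each scalar problem, and then match $\ol F(b)-\ol F(a)=\langle F_c(b)-F_c(a);F_w(b)/F_w(a)\rangle$ against the formula for $(IR)\int_a^b\ol f(t)\dd t$ from Theorem \ref{int-1}. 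Your version is slightly longer but arguably more self-contained: the paper's step ``two primitives differ by a constant'' is nowhere proved in the text and would itself require exactly the componentwise reduction you carry out (applying the scalar mean value theorem to $F_c$ and $\ln F_w$), so your argument fills in a justification the paper leaves implicit. The paper's approach, on the other hand, is shorter and reuses the already-established first fundamental theorem rather than re-deriving the scalar facts. Both are valid; no gap in yours.
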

\begin{proof} Since $\ol f\in C([a,b], \RI)$, by Theorem \ref{FTC-1}, $( IR)\int_{a}^{t}\ol f(s)\dd s  $ is   a primitive of $\ol f$ on $[a,b]$. This and the fact that $\ol F$ is  a primitive of $\ol f$ on $[a,b]$ imply that
\begin{align}\label{eqNL-1}
  F(t)=( IR)\int_{a}^{t}\ol f(s)\dd s+\ol C, ~~t\in[a,b],~~\ol C\in\RI.
\end{align}
Taking $t=a$ and $t=b$ in \eqref{eqNL-1}, on has
$$\ol F(b) - \ol F(a)=(IR)\int_{a}^{b}\ol f  (t)\dd t.$$
The proof is therefore completed.
\end{proof}

The following example shows that (\ref{nb}) is strict.
\begin{example} Considering that  $\ol F(t)=[t,t^2+1]$, $t\in [0,1]$. By Theorem \ref{thmdiff}, we have
$$\ol F'(t)=\left\langle t+\frac12;\ee^{\frac{2t-1}{t^2-t+1}}\right\rangle,~~t\in[0,1].$$
Due to Theorem \ref{int-1}, one has
\begin{align*}
  (IR)\int_0^1\ol  F'(t)\dd t=&(IR)\int_0^1\left\langle t+\frac12;\ee^{\frac{2t-1}{t^2-t+1}}\right\rangle\dd t\\
  =& \left\langle 1;1\right\rangle\\
  =&[0.2].
\end{align*}
On the other hand,
$$\ol F(1)-\ol F(0)=[1,2]-[0,1]=[0,2].$$
Hence, $$\ol F(1)-\ol F(0)=(IR)\int_0^1\ol  F'(t)\dd t.$$

However,  it is easy to see that
$$
\ol F'_{gH}=\begin{cases}
  [2t,1],&  t\in[0,\frac12],\\
  [1,2t],& t\in[\frac12,1].
\end{cases}
$$
Therefore,
$$
 \int_0^1 \ol F'_{gH}(t)\dd t=  \int_{0}^{\frac{1}{2}}[2 t, 1] \dd t+ \int_{\frac{1}{2}}^{1}[1,2 t]\dd t=\left[\frac{3}{4},\frac{5}{4}\right].
$$
But
$$
\ol F(1) \ominus_{gH} \ol F(0)=[1,1]\neq  \int_0^1 \ol F'_{gH}(t)\dd t.
$$
\end{example}

\subsection{Integration by Parts}

Let  $C^1([a,b], \RI)$ be the space of continuously differentiable IVFs on $[a,b]$, and
$C^1([a,b], \mathbb{R})$   the space of continuously differentiable functions on $[a,b]$.

\begin{lemma}\label{lem3.313} Let $\ol f,\ol g \in \mathcal{IR}([a, b],\RI)$. Then $\ol f~\ol g\in \mathcal{IR}([a, b] ,\RI).$
\end{lemma}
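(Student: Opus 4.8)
The plan is to reduce the assertion to the scalar characterization of integrability given in Theorem~\ref{int-1}, and then invoke the classical fact that the product of two Riemann integrable real functions is Riemann integrable.

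First I would write $\ol f(t)=\langle f_c(t);f_w(t)\rangle$ and $\ol g(t)=\langle g_c(t);g_w(t)\rangle$. Since $\ol f,\ol g\in\mathcal{IR}([a,b],\RI)$, Theorem~\ref{int-1} gives $f_c,\ln f_w\in\mathcal R([a,b],\R)$ and $g_c,\ln g_w\in\mathcal R([a,b],\R)$. By Definition~\ref{d2222},
$$\ol f(t)\,\ol g(t)=\big\langle f_c(t)g_c(t);\ee^{\ln f_w(t)\ln g_w(t)}\big\rangle,$$
so the center function of $\ol f\ol g$ is $f_cg_c$ and its radius function $(\ol f\ol g)_w=\ee^{\ln f_w\ln g_w}$ satisfies $\ln(\ol f\ol g)_w=(\ln f_w)(\ln g_w)$. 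Hence, by the ``if'' direction of Theorem~\ref{int-1}, it suffices to show that both $f_cg_c$ and $(\ln f_w)(\ln g_w)$ lie in $\mathcal R([a,b],\R)$.

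For this I would use the standard product rule for Riemann integrability: if $u,v\in\mathcal R([a,b],\R)$, then $uv\in\mathcal R([a,b],\R)$. If one wishes to keep the argument self-contained, recall that $u,v$ are bounded, say $|u|,|v|\le M$, and that on any subinterval $I$ of a partition $P$ one has $\sup_I uv-\inf_I uv\le M\big(\omega_I(u)+\omega_I(v)\big)$, where $\omega_I$ denotes the oscillation on $I$; multiplying by the lengths of the subintervals and summing yields $U(uv,P)-L(uv,P)\le M\big(U(u,P)-L(u,P)+U(v,P)-L(v,P)\big)$, which is arbitrarily small for a fine enough $P$ by the Riemann criterion applied to $u$ and to $v$. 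Applying this with $(u,v)=(f_c,g_c)$ gives $f_cg_c\in\mathcal R([a,b],\R)$, and with $(u,v)=(\ln f_w,\ln g_w)$ gives $(\ln f_w)(\ln g_w)\in\mathcal R([a,b],\R)$. Combining these two facts with Theorem~\ref{int-1} shows $\ol f\ol g\in\mathcal{IR}([a,b],\RI)$.

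I do not anticipate a genuine obstacle: the only point needing a word of care is the passage from integrability of $f_w,g_w$ to integrability of the product of their logarithms, and this is handled by the product rule above once one notes that Riemann integrable functions are bounded, so that $f_w,g_w$ take values in a compact subinterval of $(0,\infty)$ and $\ln f_w,\ln g_w$ are themselves bounded and Riemann integrable.
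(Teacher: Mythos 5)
Your proposal is correct and follows essentially the same route as the paper: reduce to the scalar characterization in Theorem~\ref{int-1}, observe that the center and log-radius of $\ol f\,\ol g$ are the products $f_cg_c$ and $(\ln f_w)(\ln g_w)$, and invoke closure of $\mathcal R([a,b],\R)$ under products. The only difference is that you spell out the oscillation argument for the product rule, which the paper leaves implicit.
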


\begin{proof} Since  $\overline f,\overline g\in \mathcal{I}\mathcal{R}\left( \left[ a,b \right] ,\RI \right) $, by Lemma \ref{int-1}, the functions $f_c,\ln f_w,g_c,\ln g_w \in \mathcal{R}\left( \left[ a,b \right] ,\RI \right),$ so are $  f_c  g_c$ and $ \ln f_w  \ln g_w$. Then
$$ \overline f \overline g=\left\langle    f_c   g_c;  \ee^{ \ln f_w    \ln g_w}\right\rangle\in \mathcal{I}\mathcal{R}\left( \left[ a,b \right] ,\RI \right).$$ The proof is therefore complete.
\end{proof}
\begin{corollary}\label{cor3.313} Let $\ol f \in \mathcal{IR}([a, b],\RI)$ and $g\in \mathcal{ R}([a, b],\R ) $. Then $\ol f g\in \mathcal{IR}([a, b] ,\RI).$
\end{corollary}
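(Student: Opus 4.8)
The plan is to reduce the statement to the real-variable case through Theorem~\ref{int-1}, mirroring the proof of Lemma~\ref{lem3.313}. First I would unwind the meaning of $\ol f\,g$. By Definition~\ref{def32222} one has $\ol f(x)\,g(x)=\ol f(x)\times(\ol 1\,g(x))$, and since $\ol 1\,g(x)=\langle g(x);\ee^{g(x)}\rangle$, Definition~\ref{d2222} gives
$$\ol f(x)\,g(x)=\left\langle f_c(x)\,g(x);\,\ee^{\,g(x)\ln f_w(x)}\right\rangle,$$
so the center function of $\ol f\,g$ is $f_c\,g$ and the logarithm of its radius function is $g\,\ln f_w$.

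Next I would apply the hypotheses via Theorem~\ref{int-1}. Since $\ol f\in\mathcal{IR}([a,b],\RI)$, the real functions $f_c$ and $\ln f_w$ belong to $\mathcal{R}([a,b],\R)$; and by assumption $g\in\mathcal{R}([a,b],\R)$. The elementary fact that the product of two Riemann integrable functions on a compact interval is Riemann integrable then yields $f_c\,g\in\mathcal{R}([a,b],\R)$ and $g\,\ln f_w\in\mathcal{R}([a,b],\R)$. Feeding these into the converse direction of Theorem~\ref{int-1} applied to $\ol f\,g$ concludes that $\ol f\,g\in\mathcal{IR}([a,b],\RI)$, and in fact one simultaneously reads off the value of its integral.

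A shorter alternative, worth mentioning, is to observe that $\ol 1\,g=\langle g;\ee^{g}\rangle$ is itself in $\mathcal{IR}([a,b],\RI)$ — its center $g$ and the logarithm $g$ of its radius are both Riemann integrable — so the claim follows at once from Lemma~\ref{lem3.313} applied to $\ol f$ and $\ol 1\,g$. Either route is essentially routine: there is no real obstacle, and the only point requiring care is transcribing the new scalar-by-interval product $\ol f(x)\times(\ol 1\,g(x))$ correctly into its $\langle\text{center};\text{radius}\rangle$ form so that Theorem~\ref{int-1} applies verbatim. I would present the first route to keep the corollary self-contained relative to Theorem~\ref{int-1}.
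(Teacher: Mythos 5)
Your proposal is correct and matches the paper's (implicit) argument: the paper states this as an immediate consequence of Lemma~\ref{lem3.313}, which is exactly your shorter alternative via $\ol f\,g=\ol f\,(\ol 1\,g)$ with $\ol 1\,g=\langle g;\ee^{g}\rangle\in\mathcal{IR}([a,b],\RI)$. Your primary route through Theorem~\ref{int-1} is just the same reduction carried out by hand, and the computation $\ol f\,g=\langle f_c\,g;\ee^{\,g\ln f_w}\rangle$ is correct.
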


\begin{theorem} \label{fb2}
Let $\ol F,~\ol G\in C^1([a,b], \RI)$.
Then we have
\begin{equation}\label{eq31}
\ol F(b)\ol  G(b) - \ol F(a)\ol  G(a) = (IR)\int_{a}^{b}\ol  F'(t)\ol  G(t)\dd t +(IR)\int_{a}^{b}\ol  F(t) \ol G'(t)\dd t.
\end{equation}
\end{theorem}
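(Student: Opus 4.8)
The plan is to reduce \eqref{eq31} to the Newton--Leibniz formula (Theorem \ref{tnb}) applied to the product $\ol F\ol G$, using the product rule for the new derivative (Proposition \ref{prop4.5}(2)) and the linearity of the interval Riemann integral (Proposition \ref{proprop}(2)). So the real content is already packaged in those earlier results, and what remains is to assemble them and check that their hypotheses are met.

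First I would verify that $\ol F\ol G\in C^1([a,b],\RI)$. By Definition \ref{d2222} its center function is $F_cG_c$, which is continuously differentiable since $F_c,G_c\in C^1([a,b],\R)$; its radius function $r$ satisfies $\ln r=\ln F_w\cdot\ln G_w$, a product of continuously differentiable real functions (because $\ol F,\ol G\in C^1$ forces $\ln F_w,\ln G_w\in C^1([a,b],\R)$ via Theorem \ref{thmdiff} and \eqref{mulder}). Hence, by Theorem \ref{thmdiff}, $\ol F\ol G$ is differentiable and its derivative is continuous, so $\ol F\ol G\in C^1([a,b],\RI)$. Next, Proposition \ref{prop4.5}(2) gives $(\ol F\ol G)'(t)=\ol F'(t)\ol G(t)+\ol F(t)\ol G'(t)$ for all $t\in[a,b]$. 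Both $\ol F'\ol G$ and $\ol F\ol G'$ are products of continuous IVFs, hence continuous (their center functions are $F_c'G_c$ and $F_cG_c'$, and the logarithms of their radius functions are $(\ln F_w)'\ln G_w$ and $\ln F_w(\ln G_w)'$, all continuous), so by Theorem \ref{int-2} they lie in $\mathcal{IR}([a,b],\RI)$; therefore so does their sum $(\ol F\ol G)'$. Applying Theorem \ref{tnb} to the primitive $\ol F\ol G$ of the continuous IVF $(\ol F\ol G)'$ yields
$$\ol F(b)\ol G(b)-\ol F(a)\ol G(a)=(\ol F\ol G)(b)-(\ol F\ol G)(a)=(IR)\int_a^b\bigl[\ol F'(t)\ol G(t)+\ol F(t)\ol G'(t)\bigr]\dd t .$$
Finally, Proposition \ref{proprop}(2) with $\lambda_1=\lambda_2=\ol 1$, together with $\ol 1\ol h=\ol h$ from Proposition \ref{p11}(3), splits the right-hand integral into the two integrals appearing in \eqref{eq31}, which completes the argument.

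The computations are all routine, so I do not expect a genuine obstacle; the only point demanding a little care is ensuring the hypotheses of Theorem \ref{tnb} are truly satisfied --- namely that $(\ol F\ol G)'$ is continuous (not merely integrable) and that $\ol F\ol G$ is its primitive in the sense of Definition \ref{primitive} --- and that the subtraction produced on the left-hand side of \eqref{eq31} is exactly the new subtraction of Definition \ref{subt}. If one prefers to bypass Theorem \ref{tnb} altogether, the identity can be verified componentwise via Theorems \ref{thmdiff} and \ref{int-1}: the center equation is classical integration by parts for $F_c,G_c\in C^1([a,b],\R)$, while the radius equation, after taking logarithms, reduces to classical integration by parts for $\ln F_w,\ln G_w\in C^1([a,b],\R)$ combined with the fundamental theorem of calculus applied to $(\ln F_w\cdot\ln G_w)'=(\ln F_w)'\ln G_w+\ln F_w(\ln G_w)'$.
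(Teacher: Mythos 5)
Your proof is correct, but it is organized differently from the paper's. The paper proves \eqref{eq31} by a direct componentwise computation: it expands both integrals on the right of \eqref{eq31} into $\langle\text{center};\text{radius}\rangle$ form via Theorem \ref{int-1}, adds them, applies the classical fundamental theorem of calculus to the real-valued integrands $F_c'G_c+F_cG_c'$ and $(\ln F_w)'\ln G_w+\ln F_w(\ln G_w)'$, and then recombines the result into $\ol F(b)\ol G(b)-\ol F(a)\ol G(a)$ --- essentially the ``alternative'' route you sketch in your final paragraph. Your primary argument instead assembles the statement from three already-established results: the product rule (Proposition \ref{prop4.5}(2)), the Newton--Leibniz formula (Theorem \ref{tnb}) applied to the primitive $\ol F\ol G$, and linearity/additivity of the integral. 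This is more modular and makes visible exactly which hypotheses matter (that $\ol F\ol G\in C^1$ and that $(\ol F\ol G)'$ is continuous, both of which you verify correctly via Theorem \ref{thmdiff}), whereas the paper's computation is self-contained but re-derives the componentwise identities from scratch. One small point: when you split the integral of the sum, it is cleaner to invoke Proposition \ref{proprop}(2) with the real scalars $\lambda_1=\lambda_2=1$ (using $1\ol a=\ol a$ from Proposition \ref{p2}(1)) rather than with $\lambda_1=\lambda_2=\ol 1$, since the proof of that proposition manifestly treats $\lambda_1,\lambda_2$ as real coefficients; this does not affect the validity of your argument.
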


\begin{proof} Since $\ol F,~\ol G\in C^1([a,b], \RI)$, $\ol F',~\ol G'\in C ([a,b], \RI)$, so $\ol  F' ~\ol  G$ and $\ol  F ~\ol  G'$  are continuous, and hence
 $\ol  F' ~\ol  G,~~\ol  F ~\ol  G'\in \mathcal{IR}([a,b], \RI).$
Thus,
\begin{align*}
&(IR)\int_{a}^{b}\ol  F'(t)\ol  G(t)\dd t +(IR)\int_{a}^{b}\ol  F(t) \ol G'(t)\dd t\\
=&\left\langle(R)\int_{a}^{b} {F}_c'(t)G_c(t)\dd t,\ee^{(R)\int_{a}^{b}\ln{F}_w^*(t)\ln G_w(t)\dd t}\right\rangle\\
& +\left\langle(R)\int_{a}^{b} {F}_c(t)G_c'(t)\dd t,\ee^{(R)\int_{a}^{b}\ln{F}_w(t)\ln G_w^*(t)\dd t}\right\rangle\\
 =& \left\langle(R)\int_{a}^{b} {F}_c'(t)G_c(t)\dd t+ (R)\int_{a}^{b} {F}_c(t)G_c'(t)\dd t;\right.\\
& \left.\ee^{(R)\int_{a}^{b}\ln{F}_w^*(t)\ln G_w(t)\dd t+(R)\int_{a}^{b}\ln{F}_w(t)\ln G_w^*(t)\dd t}\right\rangle\\
=&\left\langle F_c'(b)G_c(b)-F_c'(a)G_c(a); \ee^{ \ln{F}_w^*(b)\ln G_w(b)-\ln{F}_w(a)\ln G_w^*(a) }\right\rangle\\
=&\left\langle F_c'(b)G_c(b); \ee^{ \ln{F}_w^*(b)\ln G_w(b)}\right\rangle-
\left\langle F_c'(a)G_c(a); \ee^{ \ln{F}_w(a)\ln G_w^*(a) }\right\rangle\\
=&\left\langle F_c'(b) ;    {F}_w^*(b) \right\rangle\left\langle G_c(b);  G_w(b) \right\rangle-
\left\langle F_c (a) ;    {F}_w(a) \right\rangle\left\langle G_c'(a);  G_w^*(a) \right\rangle\\
=&\ol F(b)\ol  G(b) - \ol F(a)\ol  G(a).
\end{align*}
Therefore,  \eqref{eq31} holds. This ends the proof.
\end{proof}

\begin{corollary} \label{fb1}
Let $\ol F \in C^1([a, b], \RI)$   and $G\in C^1([a,b], \mathbb{R})$.
Then we have
\begin{equation}
\ol  F(b) G(b) -\ol  F(a) G(a)=(IR)\int_{a}^{b}\ol  F(t) G'(t) \dd t+(IR)\int_{a}^{b}\ol  F'(t) G(t)\dd t.
\end{equation}
\end{corollary}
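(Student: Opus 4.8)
The plan is to deduce this directly from Theorem \ref{fb2} by viewing the scalar function $G$ as an interval-valued function via the canonical embedding $\lambda\mapsto\ol\lambda=\lambda\ol 1$. Concretely, I set $\ol G(t):=\ol 1\,G(t)=\langle G(t);\ee^{G(t)}\rangle$ for $t\in[a,b]$.

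First I would verify that $\ol G\in C^1([a,b],\RI)$. Its center function is $G$ and its radius function is $\ee^{G}$; since $G\in C^1([a,b],\R)$, both are continuously differentiable on $[a,b]$, and $\ee^{G}>0$ so that $\ln\ee^{G}=G$ is $C^1$, whence $\ee^{G}$ is multiplicatively differentiable. By Theorem \ref{thmdiff} together with \eqref{mulder},
$$\ol G'(t)=\left\langle G'(t);(\ee^{G})^{*}(t)\right\rangle=\left\langle G'(t);\ee^{(\ln\ee^{G})'(t)}\right\rangle=\left\langle G'(t);\ee^{G'(t)}\right\rangle=\ol 1\,G'(t),$$
and this is continuous because $G'$ is, so $\ol G\in C^1([a,b],\RI)$.

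Next I would apply Theorem \ref{fb2} to the pair $\ol F,\ol G\in C^1([a,b],\RI)$, which gives
$$\ol F(b)\ol G(b)-\ol F(a)\ol G(a)=(IR)\int_{a}^{b}\ol F'(t)\ol G(t)\dd t+(IR)\int_{a}^{b}\ol F(t)\ol G'(t)\dd t.$$
Finally, by Definition \ref{def32222} the products simplify as $\ol F(t)\ol G(t)=\ol F(t)\bigl(\ol 1\,G(t)\bigr)=\ol F(t)G(t)$, likewise $\ol F'(t)\ol G(t)=\ol F'(t)G(t)$, and $\ol F(t)\ol G'(t)=\ol F(t)\bigl(\ol 1\,G'(t)\bigr)=\ol F(t)G'(t)$; substituting these identities and reordering the two integrals by commutativity of addition (Proposition \ref{p1}(1)) yields the asserted formula.

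I do not expect a genuine obstacle here: the only point requiring a moment's care is the identity $(\ol 1\,G)'=\ol 1\,G'$, i.e. that the multiplicative derivative of $\ee^{G}$ equals $\ee^{G'}$, which is immediate from \eqref{mulder}. A more pedestrian alternative would be to split both sides into center and radius parts and invoke the classical and the multiplicative integration-by-parts formulas separately, but that merely re-proves Theorem \ref{fb2} in disguise, so the reduction above is cleaner.
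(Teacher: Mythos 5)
Your reduction is correct and is exactly the derivation the paper intends: the corollary is stated as an immediate consequence of Theorem \ref{fb2} via the embedding $G\mapsto\ol 1\,G$ of Definition \ref{def32222}, and your verification that $\ol G=\ol 1\,G\in C^1([a,b],\RI)$ with $(\ol 1\,G)'=\ol 1\,G'$ (from \eqref{mulder}, since $\ln\ee^{G}=G$) is precisely the only step that needed checking. Nothing further is required.
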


\begin{example}\label{ex31} Considering that  $\ol F(t)=\left[  t^2, 2t+1\right]$, $\ol G(t)=\left[ t,  t^2+ 1\right]$,  $t\in [0,1]$. Then,
\begin{align*}
&(IR)\int_{0}^{1}\ol F(t) \ol  G'(t)\dd t +(IR)\int_{0}^{1}\ol F'(t) \ol  G(t)\dd t \\
=&(IR)\int_{0}^{1}\left\langle \frac{ t^2+ 2t+1}{2}; \frac{ -t^2+ 2t+1}{2} \right\rangle \left\langle t+\frac12;\ee^{\frac{2t-1}{t^2-t+1}}\right\rangle \dd t\\
&+(IR)\int_{0}^{1} \left\langle t+1;\ee^{\frac{-2t+2}{-t^2+2t+1}}\right\rangle \left\langle \frac{ t^2+ t+1}{2}; \frac{ t^2-t+1}{2} \right\rangle \dd t
\\
 =&(IR)\int_{0}^{1}\left\langle \frac{(t+1)^2(2t+1)}{4};\ee^{\frac{2t-1}{t^2-t+1}\ln\frac{ -t^2+ 2t+1}{2}}\right\rangle \dd t\\
 &+(IR)\int_{0}^{1}\left\langle \frac{(t+1)(t^2+t+1)}{2};\ee^{\frac{-2t+2}{-t^2+2t+1}\ln\frac{ t^2-t+1}{2}}\right\rangle \dd t\\
 =&\left\langle (R)\int_{0}^{1}\left(t^{3}+\frac{9}{4}{}t^{2}+2{}t +\frac{3}{4}\right)\dd t ;\ee^{(R)\int_{0}^{1}(\ln\frac{ -t^2+ 2t+1}{2} \ln\frac{ t^2-t+1}{2})'\dd t}\right\rangle\\
  =&\left\langle \frac{11}{4};2^{-\ln 2}\right\rangle,\\
\end{align*}
and
\begin{align*}
  \ol F(1)\ol G(1)- \ol F(0)\ol G(0)=&[1,3][1,2]-[0,1][0,1]\\
  =&\left\langle 3;1\right\rangle-\left\langle \frac14;2^{\ln 2} \right\rangle\\
   =&\left\langle \frac{11}{4};2^{-\ln 2}\right\rangle,
\end{align*}
Thus,
$$\ol F(1)\ol G(1)- \ol F(0)\ol G(0)=(IR)\int_{0}^{1}\ol F(t) \ol  G'(t)\dd t +(IR)\int_{0}^{1}\ol F'(t) \ol  G(t)\dd t .$$
\end{example}

\begin{example}\label{ex32} Considering that $\ol F(t)=\left[  t^2, 2t+1\right]$, and $G(t)=2-t$,   $ t\in [0,1]$. It is easy to see that
  $$(IR)\int_{0}^{1}\ol F(t)G'(t)\dd t=(IR)\int_{0}^{1}\left[  t^2, 2t+1\right](-1)  \dd t =\left\langle-\frac76;\ee^{2+\ln 2-2\sqrt{2}~ \text{arctanh} \frac{\sqrt{2}}{2} }\right\rangle,$$
 $$(IR)\int_{0}^{1}\ol F'(t)  G(t)\dd t=(IR)\int_{0}^{1}\left\langle t+1;\ee^{\frac{-2t+2}{-t^2+2t+1}}\right\rangle (2-t)   \dd t=\left\langle\frac{13}{6}, \ee^{-2+\ln 2+2\sqrt{2}~ \text{arctanh} \frac{\sqrt{2}}{2}}\right\rangle,$$
 hence,
 $$(IR)\int_{0}^{1}\ol F(t)G'(t)\dd t+(IR)\int_{0}^{1}\ol F'(t)  G(t)\dd t=\left\langle1; 4\right\rangle.$$
 On the other hand,
$$\ol F (1)G (1)- \ol F (0)G(0)=1\left[1,3\right]- 2[0,1]=\left\langle1; 4\right\rangle.$$
Thus,
$$\ol F (1)G (1)- \ol F (0)G(0)=(IR)\int_{0}^{1}\ol F(t)G'(t)\dd t+(IR)\int_{0}^{1}\ol F'(t)  G(t)\dd t.$$

\end{example}

\subsection{Convergence theorems}

First, we prove the following monotone convergence theorem.
\begin{theorem}\label{TThem4.1}
If $\ol f_n\in \mathcal{IR}([a,b],\RI)$, $n=1,2,...,$ satisfying\\
(i) $\lim_{n\rightarrow\infty} \ol f_{n}(t)=\ol f(t)$ in $[a, b]$;\\
(ii) $\ol f_{1}(t)\preccurlyeq \ol  f_{2}(t)\preccurlyeq ...\preccurlyeq \ol  f_{n}(t)\preccurlyeq ...$ for all $t\in [a, b]$;\\
(iii) $(IR)\int_a^b \ol f_{n}(t) \dd t$ converges to $\ol A\in\RI$ as $n\rightarrow\infty$.\\
Then $\ol f\in \mathcal{IR}([a,b],\RI)$ and
$$(IR)\int_{a}^{b}\ol f(t)\dd t=\ol A.$$
\end{theorem}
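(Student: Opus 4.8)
The plan is to reduce the interval statement to two scalar monotone convergence problems by means of Theorem \ref{int-1}. Write $\ol f_n(t)=\langle (f_n)_c(t);(f_n)_w(t)\rangle$, $\ol f(t)=\langle f_c(t);f_w(t)\rangle$ and $\ol A=\langle A_c;A_w\rangle$, and restate the three hypotheses coordinatewise. By Theorem \ref{thmlim}, (i) is equivalent to $(f_n)_c(t)\to f_c(t)$ and $(f_n)_w(t)\to f_w(t)$ for every $t\in[a,b]$, hence also $\ln (f_n)_w(t)\to\ln f_w(t)$. By Definition \ref{defpreceq}, (ii) says exactly that, for each fixed $t$, both real sequences $\{(f_n)_c(t)\}$ and $\{(f_n)_w(t)\}$ are non-decreasing, so $\{\ln (f_n)_w(t)\}$ is non-decreasing too. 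Finally, since $\ol f_n\in\mathcal{IR}([a,b],\RI)$, Theorem \ref{int-1} gives $(f_n)_c,\ln (f_n)_w\in\mathcal{R}([a,b],\R)$ and $(IR)\int_a^b\ol f_n(t)\dd t=\big\langle (R)\int_a^b (f_n)_c(t)\dd t;\,\ee^{(R)\int_a^b \ln (f_n)_w(t)\dd t}\big\rangle$; combining this with (iii) and Theorem \ref{thmlim} yields $(R)\int_a^b (f_n)_c(t)\dd t\to A_c$ and $(R)\int_a^b \ln (f_n)_w(t)\dd t\to\ln A_w$.

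Next I would apply the monotone convergence theorem for the scalar Riemann integral twice: to the non-decreasing sequence $\{(f_n)_c\}$ of Riemann integrable functions converging pointwise to $f_c$ with convergent integrals, and to the non-decreasing sequence $\{\ln (f_n)_w\}$ converging pointwise to $\ln f_w$ with convergent integrals. Each application delivers that the limit is Riemann integrable on $[a,b]$ and that the integral passes to the limit, i.e. $f_c\in\mathcal{R}([a,b],\R)$ with $(R)\int_a^b f_c(t)\dd t=A_c$, and $\ln f_w\in\mathcal{R}([a,b],\R)$ with $(R)\int_a^b \ln f_w(t)\dd t=\ln A_w$.

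It then only remains to reassemble. Since $f_c$ and $\ln f_w$ are Riemann integrable, Theorem \ref{int-1} applies in the converse direction and gives $\ol f\in\mathcal{IR}([a,b],\RI)$ together with
\[
(IR)\int_a^b\ol f(t)\dd t=\Big\langle (R)\int_a^b f_c(t)\dd t;\ \ee^{(R)\int_a^b \ln f_w(t)\dd t}\Big\rangle=\big\langle A_c;\ \ee^{\ln A_w}\big\rangle=\langle A_c;A_w\rangle=\ol A ,
\]
which is the assertion.

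The decoupling and the reassembly are routine; the crux — and essentially the only delicate point — is the scalar step, the monotone convergence theorem for the Riemann integral applied to $\{(f_n)_c\}$ and to $\{\ln (f_n)_w\}$. Unlike its Lebesgue counterpart this is genuinely subtle, since the pointwise limit of an increasing sequence of Riemann integrable functions need not be Riemann integrable, nor even bounded. I would handle it by the classical Darboux-sum estimate: for $\varepsilon>0$ pick $N$ with $A_c-(R)\int_a^b (f_N)_c(t)\dd t<\varepsilon$, and on a partition witnessing the Riemann integrability of $(f_N)_c$ bound the upper-minus-lower Darboux sum of $f_c$ by that of $(f_N)_c$ plus a term controlled by $A_c-(R)\int_a^b (f_N)_c(t)\dd t$; Riemann's criterion then gives integrability of $f_c$ and, refining, $(R)\int_a^b f_c(t)\dd t=A_c$, and likewise for $\ln f_w$ (here $\ln (f_n)_w\ge\ln (f_1)_w$ stays bounded below by strict positivity of the radii). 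The one point where care is required is the boundedness of the limiting center and radius functions, which is automatic once the $\ol f_n$ are uniformly bounded and otherwise should be recorded as a standing hypothesis; granting it, the argument above closes the proof.
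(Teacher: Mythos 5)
Your reduction is exactly the paper's: decompose via Theorem~\ref{int-1} and Theorem~\ref{thmlim} into the center sequence $\{(f_n)_c\}$ and the log-radius sequence $\{\ln (f_n)_w\}$, note via Definition~\ref{defpreceq} that hypothesis (ii) makes both scalar sequences non-decreasing, apply a monotone convergence theorem for the scalar Riemann integral to each, and reassemble with Theorem~\ref{int-1}. The only divergence is at the scalar step: the paper simply cites an external result for it, whereas you attempt to prove it by a Darboux-sum estimate, and that is where your argument does not close. The statement you need --- a non-decreasing sequence of Riemann integrable functions, converging pointwise, with convergent integrals, has a Riemann integrable limit whose integral is the limit --- is false in general: take $f_n=\mathbf{1}_{\{q_1,\dots,q_n\}}$ for an enumeration $\{q_k\}$ of $\Q\cap[a,b]$; each $f_n$ is Riemann integrable with integral $0$, the sequence increases, the integrals converge to $0$, yet the pointwise limit is the Dirichlet function. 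Your proposed estimate fails precisely on this example: for a non-decreasing sequence one has $U(f,P)\geq U(f_N,P)$ with no reverse control, so $U(f,P)-L(f,P)$ is \emph{not} bounded by $U(f_N,P)-L(f_N,P)$ plus the integral gap (here the gap is $0$ for every $N$ while $U(f,P)-L(f,P)=b-a$ for every partition $P$). Uniform boundedness, which you mention, does not rescue the argument either --- the counterexample is uniformly bounded by $1$. The scalar step genuinely requires an extra hypothesis (e.g.\ that the limit is a priori Riemann integrable, as in Arzel\`a-type theorems, or uniform convergence). The paper inherits the identical issue through its citation, so your proof is no weaker than the published one; but the subtlety you correctly flagged is real, and the Darboux-sum fix you sketch does not repair it.
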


\begin{proof} Let $\ol f(t)=\langle {f}_c(t);{f}_w(t)\rangle$, $\ol f_n(t)=\langle ({f}_n)_c(t); ({f}_n)_w(t)\rangle$, $t\in[a,b]$, $n=1,2,..., $ $\ol A=\langle A_c;A_w\rangle$.
Since $\ol f_n\in \mathcal{IR}([a,b],\RI)$, $n=1,2,...,$ by Lemma \ref{int-1}, one has
$$ ({f}_n)_c, \ln({f}_n)_w\in \mathcal{ R}([a,b],\mathbb{R}) ,~~ n=1,2,... .$$
It follows from (i), (iii) and Theorem  \ref{thmlim} that
$$ \lim_{n\rightarrow\infty}  ({f}_{n})_c(t)=  {f}_c(t),~~~~ \lim_{n\rightarrow\infty}  ({f}_{n})_w(t)=  {f}_w(t),~~ t\in [a, b],$$
and
$$ \lim_{n\rightarrow\infty}(R)\int_a^b  ({f}_{n})_c(t)\dd t=A_c,~~~~\lim_{n\rightarrow\infty}(R)\int_a^b \ln ({f}_{n})_w(t)\dd t=\ln A_w.$$
Moreover, by (ii) and Definition \ref{defpreceq}, we have
$$ ({f}_{1})_c(t)\leq  ({f}_{2})_c(t)\leq...\leq  ({f}_{n})_c(t)\leq...,~~ t\in [a, b],$$
and$$ ({f}_{1})_w(t)\leq ({f}_{2})_w(t)\leq...\leq ({f}_{n})_w(t)\leq..., ~~ t\in [a, b],$$
i.e.,
$$ \ln({f}_{1})_w(t)\leq\ln ({f}_{2})_w(t)\leq...\leq \ln({f}_{n})_w(t)\leq..., ~~ t\in [a, b].$$
According to \cite[Theorem 4.1]{PYL89}, it holds that $ {f}_c, {f}_w\in \mathcal{R}([a,b],\mathbb{R})$ and
$$ (R)\int_{a}^{b} {f}_c(t)\dd t= A_c,~~~~ (R)\int_{a}^{b}\ln {f}_w(t)\dd t= \ln A_w.$$
This implies that
$$(IR)\int_{a}^{b}\ol f(t)\dd t=\ol A.$$
 Therefore, the proof is completed.
\end{proof}

Now we state and prove the dominated convergence theorem.

\begin{theorem}\label{DCT}
If $\ol f_n\in \mathcal{IR}([a,b],\RI)$, $n=1,2,...,$ satisfying\\
(i) $\lim_{n\rightarrow\infty}\ol f_{n}(t)=\ol f(t)$ in $[a, b]$;\\
(ii) $\ol g(t)\preccurlyeq \ol f_{n}(t)\preccurlyeq \ol h(t)$ for all $t\in[a, b]$ and all $n$, where $\ol g,\ol h\in \mathcal{IR}([a,b],\RI)$.\\
 Then $\ol f\in \mathcal{IR}([a,b],\RI)$ and
$$ \lim_{n\rightarrow\infty}(IR)\int_{a}^{b}\ol f_{n}(t)\dd t= (IR)\int_{a}^{b}\ol f(t)\dd t.$$
\end{theorem}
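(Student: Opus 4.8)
The plan is to reduce the statement to the classical dominated convergence theorem for the real-valued center and logarithmic-radius functions, exactly along the lines of the proof of the monotone convergence theorem (Theorem \ref{TThem4.1}). Write $\ol f(t)=\langle f_c(t);f_w(t)\rangle$, $\ol f_n(t)=\langle (f_n)_c(t);(f_n)_w(t)\rangle$, $\ol g(t)=\langle g_c(t);g_w(t)\rangle$, $\ol h(t)=\langle h_c(t);h_w(t)\rangle$. First, by Theorem \ref{int-1}, the hypotheses $\ol f_n,\ol g,\ol h\in\mathcal{IR}([a,b],\RI)$ are equivalent to $(f_n)_c,\ln(f_n)_w,g_c,\ln g_w,h_c,\ln h_w\in\mathcal{R}([a,b],\R)$. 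Next, by Theorem \ref{thmlim}, hypothesis (i) yields the pointwise convergences $(f_n)_c(t)\to f_c(t)$ and $(f_n)_w(t)\to f_w(t)$ on $[a,b]$, hence $\ln(f_n)_w(t)\to\ln f_w(t)$. Finally, by Definition \ref{defpreceq}, hypothesis (ii) gives the two-sided bounds $g_c(t)\le (f_n)_c(t)\le h_c(t)$ and $g_w(t)\le (f_n)_w(t)\le h_w(t)$, i.e. $\ln g_w(t)\le\ln(f_n)_w(t)\le\ln h_w(t)$, for every $t$ and every $n$; in particular $|(f_n)_c|\le\max\{|g_c|,|h_c|\}$ and $|\ln(f_n)_w|\le\max\{|\ln g_w|,|\ln h_w|\}$ are majorized by Riemann-integrable functions.

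Now I apply the classical dominated convergence theorem for the Riemann integral to the two real sequences $\{(f_n)_c\}$ and $\{\ln(f_n)_w\}$: each is Riemann integrable, converges pointwise, and is dominated by a Riemann-integrable function, so the limits $f_c$ and $\ln f_w$ belong to $\mathcal{R}([a,b],\R)$ and
\[
\lim_{n\to\infty}(R)\int_a^b (f_n)_c(t)\,\dd t=(R)\int_a^b f_c(t)\,\dd t,\qquad \lim_{n\to\infty}(R)\int_a^b \ln(f_n)_w(t)\,\dd t=(R)\int_a^b \ln f_w(t)\,\dd t.
\]
By Theorem \ref{int-1} again, $f_c,\ln f_w\in\mathcal{R}([a,b],\R)$ forces $\ol f\in\mathcal{IR}([a,b],\RI)$, and
\[
(IR)\int_a^b \ol f_n(t)\,\dd t=\Big\langle (R)\int_a^b (f_n)_c(t)\,\dd t;\ \ee^{(R)\int_a^b \ln(f_n)_w(t)\,\dd t}\Big\rangle .
\]
Passing to the limit component-wise, using Theorem \ref{thmlim} together with continuity of $\exp$, this converges to $\big\langle (R)\int_a^b f_c(t)\,\dd t;\ \ee^{(R)\int_a^b \ln f_w(t)\,\dd t}\big\rangle=(IR)\int_a^b \ol f(t)\,\dd t$, which is the assertion.

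I expect the only genuinely delicate step to be the invocation of the classical dominated convergence theorem for the Riemann integral. Unlike the Lebesgue setting, Riemann integrability of a pointwise limit is not automatic, so one must lean on the precise classical form of the result (in the same circle of ideas as \cite[Theorem 4.1]{PYL89} used for Theorem \ref{TThem4.1}), and it is exactly hypothesis (ii) — simultaneous domination from below by $\ol g$ and from above by $\ol h$ — that supplies the Riemann-integrable majorant needed to apply it. Once that real-variable ingredient is in place, the remainder is the routine component-wise translation through Theorems \ref{int-1} and \ref{thmlim}, identical in structure to the monotone case.
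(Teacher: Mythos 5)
Your proposal follows essentially the same route as the paper's own proof: decompose into the center function and the logarithm of the radius function via Theorem~\ref{int-1}, translate hypotheses (i) and (ii) component-wise through Theorem~\ref{thmlim} and Definition~\ref{defpreceq}, and invoke a real-variable dominated convergence theorem for the Riemann integral (the paper cites \cite[Theorem 4.3]{PYL89} for exactly this, which also supplies the Riemann integrability of the limit that you rightly flag as the delicate point). The component-wise passage to the limit at the end is likewise identical, so there is nothing to add.
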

\begin{proof}Let Let $\ol f(t)=\langle {f}_c(t);{f}_w(t)\rangle$, $\ol f_n(t)=\langle ({f}_n)_c(t); ({f}_n)_w(t)\rangle$, $t\in[a,b]$, $n=1,2,..., $  $\ol g(t)=\langle {g}_c(t);{g}_w(t)\rangle$, $\ol h(t)=\langle {h}_c(t);{h}_w(t)\rangle$, $t\in[a,b]$.
Since $\ol g,\ol h,\ol f_n\in \mathcal{IR}([a,b],\RI)$, $n=1,2,...,$ by Lemma \ref{int-1}, one has
$$ {g}_c, \ln {g}_w, {h}_c, \ln {h}_w, ({f}_n)_c, \ln ({f}_n)_w\in \mathcal{R}([a,b],\mathbb{R}) ,~~ n=1,2,... .$$
Due to (i) and Theorem
\ref{thmlim}, we have
$$ \lim_{n\rightarrow\infty}  ({f}_{n})_c(t) =  {f}_c(t),$$$$ \lim_{n\rightarrow\infty}  \ln ({f}_{n})_w(t)= \ln {f}_w(t),~~ t\in [a, b].$$
Moreover, thanks to (ii) and Definition \ref{defpreceq}, we get
$$ {g}_c (t)\leq  ({f}_{n})_c(t)\leq  {h}_c(t),$$$$\ln {g}_w (t)\leq  \ln ({f}_{n})_w(t)\leq  \ln {h}_w(t), ~~ t\in [a, b].$$
By \cite[Theorem 4.3]{PYL89}, it holds that $ {f}_c, {f}_w\in \mathcal{R}([a,b],\mathbb{R})$ and
$$ \lim_{n\rightarrow\infty}(R)\int_{a}^{b} ({f}_{n})_c(t)\dd t= (R)\int_{a}^{b} {f}_c(t)\dd t,$$$$ \lim_{n\rightarrow\infty}(R)\int_{a}^{b}\ln ({f}_{n})_w(t)\dd t= (R)\int_{a}^{b} \ln {f}_w(t)\dd t.$$
This implies that
$$ \lim_{n\rightarrow\infty}(IR)\int_{a}^{b}\ol f_{n}(t)\dd t= (IR)\int_{a}^{b}\ol f(t)\dd t.$$
The proof is therefore completed.
\end{proof}

\section{Interval Differential Equations}

Now, considering the following interval  differential equation
\begin{equation}\label{L1.1}
\left\{\begin{array}{ll}
 \ol x'=\ol f(t,\ol x),\\
 \ol x(a)=\ol x_0,
\end{array}\right.
\end{equation}
where $\ol x_0\in \RI$
and $\ol f:[a,b]\times\mathbb{R}_\mathcal{I}\rightarrow \mathbb{R}_\mathcal{I}$,  $\ol x'$ denotes the derivative of $\ol x\in C^1([a,b],\mathbb{R}_\mathcal{I}).$

\begin{definition}
Suppose $\ol x\in C^1([a,b],\mathbb{R}_\mathcal{I})$. We say that $\ol x$ is a solution of Equation \eqref{L1.1} if $\ol x$ satisfies \eqref{L1.1}.
\end{definition}

We now impose some assumptions on  $\ol f$.
\begin{itemize}
 \item[($H_1$)]~$\ol f(t,\cdot)$ is continuous for all $t\in [a,b]$.
 \item[($H_2$)]~$\ol f(\cdot, \ol x(\cdot))$ is Riemann integrable for every fixed $\ol x\in C^1 ([a,b],\mathbb{R}_\mathcal{I})$.
 \item[($H_3$)] There exist  two Riemann integrable IVFs $\ol g$ and $\ol h$
 on $[a,b]$, such that
 $$\ol g(t)\preccurlyeq \ol  f(t,\ol x(t))\preccurlyeq \ol  h(t),~~\forall ~\ol x\in C^1 ([a,b],\mathbb{R}_\mathcal{I}), ~~t\in [a,b].$$
\end{itemize}

Based on Theorem \ref{tnb}, we have

\begin{lemma} If~ $\ol x\in C^1([a,b],\mathbb{R}_\mathcal{I})$,  then the interval differential equation \eqref{L1.1} is equivalent to
\begin{equation} \label{L1.12.}
\ol x(t)=\ol x_0+(IR) \int_{a}^{t}\ol f(s,\ol x(s))ds,~~t\in[a,b].
\end{equation}
\end{lemma}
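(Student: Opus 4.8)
The plan is to prove the two implications separately, in each case reducing matters to the two fundamental theorems of calculus for the interval integral (Theorems~\ref{tnb} and~\ref{FTC-1}) together with the fact that $(\RI,+)$ is an additive group (Proposition~\ref{p1}, and the identity $\ol a-\ol a=\ol 0$ recorded after Definition~\ref{subt}).

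\emph{Necessity.} Suppose $\ol x\in C^1([a,b],\RI)$ solves~\eqref{L1.1}, so $\ol x'(\cdot)=\ol f(\cdot,\ol x(\cdot))$ on $[a,b]$. Since $\ol x\in C^1$, its derivative is continuous, hence $\ol f(\cdot,\ol x(\cdot))\in C([a,t],\RI)$ for every $t\in[a,b]$ and $\ol x|_{[a,t]}$ is a primitive of it. Applying Theorem~\ref{tnb} on $[a,t]$ yields $\ol x(t)-\ol x(a)=(IR)\int_a^t\ol f(s,\ol x(s))\,ds$; adding $\ol x(a)$ to both sides and recalling $\ol x(a)=\ol x_0$ (legitimate because each interval number has a unique additive inverse, Proposition~\ref{p1}(4), and $\ol a-\ol a=\ol 0$) produces exactly~\eqref{L1.12.}. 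A more explicit route is to pass to endpoints: by Theorems~\ref{thmdiff} and~\ref{int-1} the equation $\ol x'=\ol f(\cdot,\ol x(\cdot))$ is equivalent to the two scalar identities $x_c'=f_c(\cdot,\ol x(\cdot))$ and $(\ln x_w)'=\ln f_w(\cdot,\ol x(\cdot))$, each of which is in turn equivalent by the classical fundamental theorem of calculus to a Volterra integral equation; reassembling these two integral equations via $\langle\,\cdot\,;\,\cdot\,\rangle$ and Theorem~\ref{int-1} gives~\eqref{L1.12.}.

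\emph{Sufficiency.} Conversely, let $\ol x\in C^1([a,b],\RI)$ satisfy~\eqref{L1.12.}. Putting $t=a$ and using $(IR)\int_a^a\ol f(s,\ol x(s))\,ds=\langle 0;1\rangle=\ol 0$ (Theorem~\ref{int-1}) together with $\ol x_0+\ol 0=\ol x_0$ yields the initial condition $\ol x(a)=\ol x_0$. For the differential part, observe that the constant map $t\mapsto\ol x_0$ has derivative $\ol 0$ (its center is constant and its radius is constant, so $(\ln(\cdot))'\equiv 0$ and $\ee^0=1$); hence the additivity of the derivative, Proposition~\ref{prop4.5}(1), gives $\ol x'(t)=\bigl((IR)\int_a^{\cdot}\ol f(s,\ol x(s))\,ds\bigr)'(t)$, and Theorem~\ref{FTC-1} evaluates the right-hand side to $\ol f(t,\ol x(t))$, so $\ol x'=\ol f(\cdot,\ol x(\cdot))$. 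Equivalently, one may again pass to the endpoint functions $x_c$ and $\ln x_w$ and invoke the classical equivalence between a $C^1$ solution of a scalar integral equation and of the corresponding initial value problem.

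\emph{Main obstacle.} The only genuinely delicate point lies in the sufficiency direction: Theorem~\ref{FTC-1} identifies the derivative of $t\mapsto(IR)\int_a^t\ol f(s,\ol x(s))\,ds$ with $\ol f(t,\ol x(t))$ only at points of continuity of $s\mapsto\ol f(s,\ol x(s))$, so to conclude $\ol x'=\ol f(\cdot,\ol x(\cdot))$ at \emph{every} $t$ one needs this map continuous --- a property that $(H_1)$ by itself does not furnish. Under a joint-continuity assumption on $\ol f$ (or, in the applications, once $\ol x\in C^1$ forces $\ol f(\cdot,\ol x(\cdot))=\ol x'$ to be continuous a posteriori) the obstruction disappears; otherwise the equivalence holds up to modification of $\ol f(\cdot,\ol x(\cdot))$ on a Lebesgue-null set. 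Apart from this, the proof is bookkeeping with the new operations: one must keep in mind that the additive zero is $[-1,1]$, that $\ol a-\ol a=\ol 0$, and that both $(IR)\int$ and the derivative act on the radius component through $\exp$/$\log$ rather than linearly.
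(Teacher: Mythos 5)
Your proof is correct and follows exactly the route the paper intends: the paper offers no written proof beyond the phrase ``Based on Theorem~\ref{tnb}'', and your argument (Theorem~\ref{tnb} for necessity, Theorem~\ref{FTC-1} plus the additive-group structure for sufficiency) is precisely the standard filling-in of that gap. Your remark that Theorem~\ref{FTC-1} only identifies the derivative at continuity points of $s\mapsto\ol f(s,\ol x(s))$ --- which the $C^1$ hypothesis on $\ol x$ resolves a posteriori in the necessity direction but which requires care in the sufficiency direction --- is a genuine subtlety the paper silently passes over, and flagging it strengthens rather than weakens the argument.
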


We are now ready to give our main results.

\begin{theorem}\label{L1.2} Suppose that $\ol f$ in   \eqref{L1.1} satisfies assumptions $(H_1)$-$(H_3)$.
If $\ol x\in C^1 ([a,b],\mathbb{R}_\mathcal{I}) $,
then there exists at least one solution of \eqref{L1.1}.
\end{theorem}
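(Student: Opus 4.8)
The plan is to recast \eqref{L1.1} as a fixed point problem and invoke Schauder's fixed point theorem in the Banach space $C([a,b],\RI)$ (Theorem \ref{thm3.22}). By the preceding lemma, a function $\ol x\in C^1([a,b],\RI)$ solves \eqref{L1.1} if and only if it solves the integral equation \eqref{L1.12.}, so it is enough to produce a fixed point of the operator $T:C([a,b],\RI)\to C([a,b],\RI)$ given by
\[
(T\ol x)(t)=\ol x_0+(IR)\int_a^t \ol f(s,\ol x(s))\,\dd s,\qquad t\in[a,b].
\]
Assumption $(H_2)$ makes the integrand interval Riemann integrable, and the first part of Theorem \ref{FTC-1} makes $T\ol x$ continuous, so $T$ is well defined; a fixed point $\ol x^\ast$ will then be shown, via the second part of Theorem \ref{FTC-1}, to be differentiable with $(\ol x^\ast)'(t)=\ol f(t,\ol x^\ast(t))$ wherever $\ol f(\cdot,\ol x^\ast(\cdot))$ is continuous, hence a solution.

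First I would pass to the center/radius coordinates, where everything reduces to the classical picture. By Theorem \ref{int-1},
\[
(T\ol x)_c(t)=(x_0)_c+\int_a^t f_c(s,\ol x(s))\,\dd s,\qquad
\ln (T\ol x)_w(t)=\ln (x_0)_w+\int_a^t \ln f_w(s,\ol x(s))\,\dd s .
\]
From $(H_3)$ and Definition \ref{defpreceq} I obtain the uniform two--sided bounds $g_c(t)\le f_c(t,\ol x(t))\le h_c(t)$ and $\ln g_w(t)\le \ln f_w(t,\ol x(t))\le \ln h_w(t)$, valid for \emph{every} $\ol x$, where $g_c,h_c,\ln g_w,\ln h_w\in\mathcal R([a,b],\R)$ by Theorem \ref{int-1} applied to $\ol g,\ol h$. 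Setting $m:=|g_c|+|h_c|+|\ln g_w|+|\ln h_w|\in\mathcal R([a,b],\R)$, one gets, uniformly in $\ol x$, both a bound on $\|(T\ol x)(t)\|$ and the modulus estimate $d\big((T\ol x)(t_1),(T\ol x)(t_2)\big)\le \sqrt2\,\big|\!\int_{t_1}^{t_2} m(s)\,\dd s\big|$. Absolute continuity of the integral of $m$ then shows that $M:=T\big(C([a,b],\RI)\big)$ is uniformly bounded and equicontinuous, hence relatively compact by the Arzel\`a--Ascoli theorem (the target $\RI$ being a complete metric space, Theorem \ref{thm3.3}).

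Next I would verify that $T$ is continuous. If $\ol x_n\to\ol x$ in $C([a,b],\RI)$, then $\|\ol x_n(s)-\ol x(s)\|=d(\ol x_n(s),\ol x(s))\le\|\ol x_n-\ol x\|_\infty\to0$ for each $s$, so $(H_1)$ gives $\ol f(s,\ol x_n(s))\to\ol f(s,\ol x(s))$ pointwise, while $(H_3)$ supplies the fixed integrable envelope $\ol g\preccurlyeq\ol f(s,\ol x_n(s))\preccurlyeq\ol h$. The dominated convergence theorem (Theorem \ref{DCT}), applied in the center and $\ln$-radius coordinates, yields $(IR)\int_a^t\ol f(s,\ol x_n(s))\,\dd s\to(IR)\int_a^t\ol f(s,\ol x(s))\,\dd s$; since the domination is independent of $t$, this is uniform in $t$, i.e. $\|T\ol x_n-T\ol x\|_\infty\to0$. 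Finally I set $K:=\overline{\mathrm{conv}}\,M$, a compact convex subset of the Banach space $C([a,b],\RI)$ with $T(K)\subseteq M\subseteq K$; Schauder's fixed point theorem then delivers $\ol x^\ast\in K$ with $T\ol x^\ast=\ol x^\ast$, which by Theorem \ref{FTC-1} and the equivalence of \eqref{L1.1} with \eqref{L1.12.} is a solution of \eqref{L1.1}.

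The step I expect to be the main obstacle is the compactness input: extracting the single integrable majorant $m$ from $(H_3)$ and upgrading it, through absolute continuity of the integral, to \emph{uniform} equicontinuity of the whole image $T\big(C([a,b],\RI)\big)$, so that Arzel\`a--Ascoli genuinely applies and $K$ is compact. A closely related delicate point is regularity --- making sure the Schauder fixed point really lies in $C^1([a,b],\RI)$ (and not merely in $C([a,b],\RI)$), which is where the new fundamental theorem of calculus, Theorem \ref{FTC-1}, together with the hypotheses on $\ol f$, must do the work; the continuity of $T$ similarly rests entirely on the dominated convergence theorem, Theorem \ref{DCT}. The algebraic bookkeeping --- rewriting $T$ and all estimates through the pair $(f_c,\ln f_w)$ --- is routine once the new linear and Hilbert structure on $\RI$ is available.
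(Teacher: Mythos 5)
Your proposal follows essentially the same route as the paper: reformulate \eqref{L1.1} as the integral equation \eqref{L1.12.}, show the integral operator maps a suitable set into a relatively compact set using the $(H_3)$ envelope and the Arzel\`a--Ascoli theorem, establish continuity of the operator via the dominated convergence theorem (Theorem \ref{DCT}), and conclude with Schauder's fixed point theorem. The only differences are cosmetic --- you work in $C([a,b],\RI)$ and pass to the closed convex hull of the image, whereas the paper works directly with the ball $E=\{\ol x:\ \|\ol x-\ol x_0\|_\infty\le M\}$ --- and your explicit treatment of the integrable majorant and of the $C^1$-regularity of the fixed point is, if anything, more careful than the published argument.
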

\begin{proof}
Since   $\ol g,\ol h$ are Riemann integrable on $[a,b]$,  the primitives   of $\ol g$ and $\ol h$ are continuous on $ [a,b]$.
 Thus, $\underline{G},$ $\overline{G}$, $\underline{H} $ and $\overline{H}$ are also continuous functions on $[a,b]$, so they are bounded.
Let $$M=
\max\limits_{t\in [a,b]}\left\{ \left\Vert (IR)\displaystyle\int_{a}^{t}\ol g(s)\dd s\right\Vert,~\left\Vert (IR)\displaystyle\int_{a}^{t}\ol h(s)\dd s\right\Vert\right\}.$$
According to     $(H_3)$, Lemma \ref{lem2.8}, and Proposition \ref{proprop}, one has
$$(IR)\int_{a}^{t}\ol g(s)\dd s\leq (IR) \int_{a}^{t}\ol f(s,\ol x(s))\dd s\leq (IR) \int_{a}^{t}\ol h (s)\dd s.$$
Let $E=\{ x\in C^1([a,b],\mathbb{R}_\mathcal{I}):~\Vert x-x_{0}\Vert_\infty\leq M \}$,~and  define the operator $\mathcal{T}$ by
\begin{equation}
\mathcal{T}(\ol x)(t)=\ol x_0+(IR)\displaystyle\int_{a}^{t}\ol f(s,\ol x(s))ds,~~t\in[a,b].
\end{equation}
For any $x\in E$, we have
\begin{align*}
~\Vert \mathcal{T}(\ol x)-\ol x_0\Vert_\infty
= &\sup\limits_{t\in [a,b]} d\left((IR)\displaystyle\int_{a}^{t}\ol f(s,\ol x(s))\dd s,\ol 0\right)\\
\leq& \max\left\{\sup\limits_{t\in [a,b]} d\left((IR)\displaystyle\int_{a}^{t}\ol g(s)\dd s,\ol 0\right), \sup\limits_{t\in [a,b]} d\left((IR)\displaystyle\int_{a}^{t}\ol h(s)\dd s,\ol 0\right)\right\}\\
\leq& M.
\end{align*}
 Thus,~$\mathcal{T}$ maps the set $E$ to itself,~that is,~$\mathcal{T}:E\rightarrow E$.

On the other hand,~for every $x\in E$ and $t_{1},~t_{2}\in[a,b],$~if $t_{1}\leq t_{2}$,
by $(H_3)$, we also get that
\begin{equation}
(IR) \int_{t_{1}}^{t_{2}}\ol g(s)\dd s \leq (IR) \int_{t_{1}}^{t_{2}}\ol f(s,\ol x(s))\dd s\leq
(IR) \int_{t_{1}}^{t_{2}}\ol h(s)\dd s.
\end{equation}
\\
Hence,~
\begin{equation}\label{L1.121}
\begin{split}
d(\mathcal{T}(x)(t_{2}),\mathcal{T}(x)(t_{1}))
=& d\left((IR)\displaystyle\int_{t_{1}}^{t_{2}}\ol f(s,\ol x(s))\dd s,\ol 0\right)\\
\leq&
d\left((IR)\displaystyle\int_{t_{1}}^{t_{2}}\ol g(s) \dd s,\ol 0\right)+
d\left((IR)\displaystyle\int_{t_{1}}^{t_{2}}\ol h(s) \dd s,\ol 0\right).%\\
%\to& 0, \text{  as } t_1\to t_2.
\end{split}
\end{equation}
Since the primitives of $g$, $h$ are continuous and so are uniformly continuous on $[a,b]$, respectively. Thus,  by  \eqref{L1.121}, we see that $\mathcal{T}(E)$ is equicontinuous.
 In view of the Ascoli-Arzel\`a Theorem, $\mathcal{T}(E)$ is relatively compact.

Finally, we only need to prove that $\mathcal{T}:E\rightarrow E$ is continuous.~Suppose that $\ol x_{m}\in E$, $m=1,2,...,$ and there exists $\ol x^{*}\in E$ such that $\ol x_{m}\rightarrow \ol x^{*}$ as $m\rightarrow \infty$. According to  $(H_1)$,
\begin{equation}
\overline{f}(\cdot, \ol x_{m})\rightarrow \overline{f}(\cdot, \ol x^{*}),~~\text{ as } m\rightarrow \infty.
\end{equation}
 By  Theorem \ref{DCT}, we have
\begin{equation}
\lim\limits_{m\rightarrow \infty}(IR)\displaystyle\int_{a}^{t}\ol f(s, \ol x_{m}(s))\dd s=
(IR)\displaystyle\int_{a}^{t}\ol f(s, \ol x^{*}(s))\dd s.
\end{equation}
This implies that $\lim\limits_{m\rightarrow \infty}\mathcal{T}(\ol x_{m})(\cdot)=\mathcal{T}(\ol x^{*})(\cdot)$, and hence $\mathcal{T}$ a continuous operator. Thus, $\mathcal{T}$ is a compact operator. By the Schauder's fixed point theorem, it follows that $\mathcal{T}$ has a fixed point in $E$, which is a solution of  \eqref{L1.1}.
\end{proof}

\begin{example} Considering the interval  differential equation
\begin{equation} \label{e1}
\begin{cases}
  \ol x'(t)=\displaystyle\frac{[1,2] t}{1+ \ol x^2(t)},&   t\in [0,4],\\
\ol x(0) = [-1,1].&
\end{cases}
\end{equation}
It is easy to see that $\ol f(t,\ol x(t))=\frac{[1,2] t}{1+ \ol x^2(t)} $  satisfies $(H_1)$ and $(H_2)$, and since $\ol 0\preccurlyeq \frac{1}{1+ \ol x^2(t)}\preccurlyeq \ol  1 $ for all $x\in C^1([0,1],\mathbb R_{\mathcal I})$ and $t\in[0,1]$,  one has
$$\ol 0\preccurlyeq  f(t,x(t)) \preccurlyeq [1,2] t , ~~ x\in C^1([0,1],\mathbb R_{\mathcal I}),  ~~ t\in[0,1],$$
i.e., $(H_3)$ holds. By Theorem \ref{L1.2}, there exists a   solution  of (\ref{e2}). Moreover, we have
$$\ol x(t)=\left[\xi(t)- \xi^{-1}(t)-\ee^{ \eta(t)-\eta^{-1}(t)},
~\xi(t)- \xi^{-1}(t)+\ee^{ \eta(t)-\eta^{-1}(t)}\right],\qquad t\in[0,1],$$
where
$$\xi(t)=\frac12\left(9 t^{2}+\sqrt{81 t^{4}+64}\right)^{\frac13},$$
$$\eta(t)=\frac12\left(-6   t^{2} \ln   2+2 \sqrt{16+9 t^{4} \ln^{2} 2}\right)^{\frac13},$$
see Figure \ref{ex64} .
\end{example}

\begin{figure}[htp]
  \centering
\subfigure[{Solution of \eqref{e1}}]{
\includegraphics[width=0.47\linewidth]{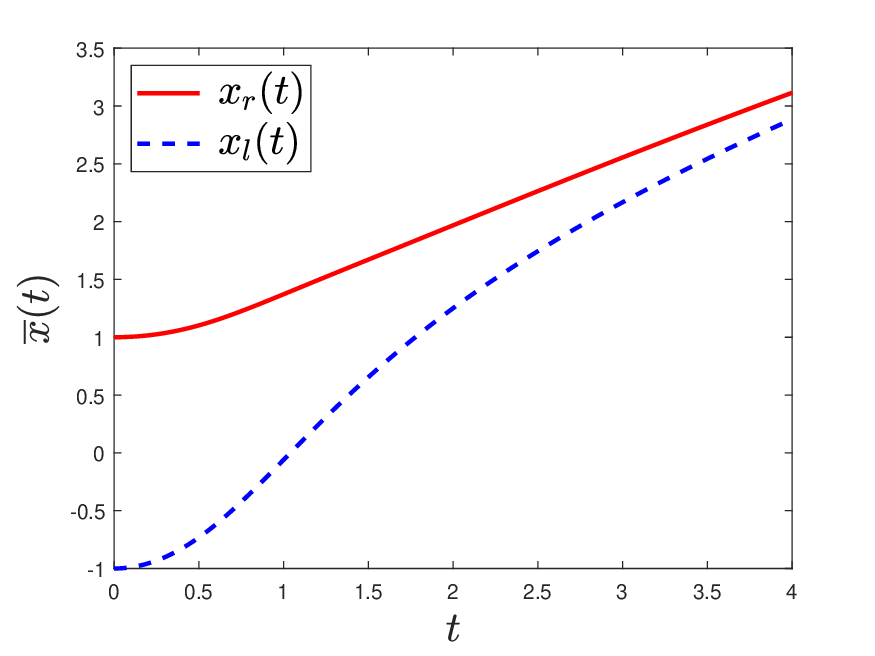}
\label{ex64}
}
  \subfigure[{Solution of \eqref{e1-1}}]{
\includegraphics[width=0.47\linewidth]{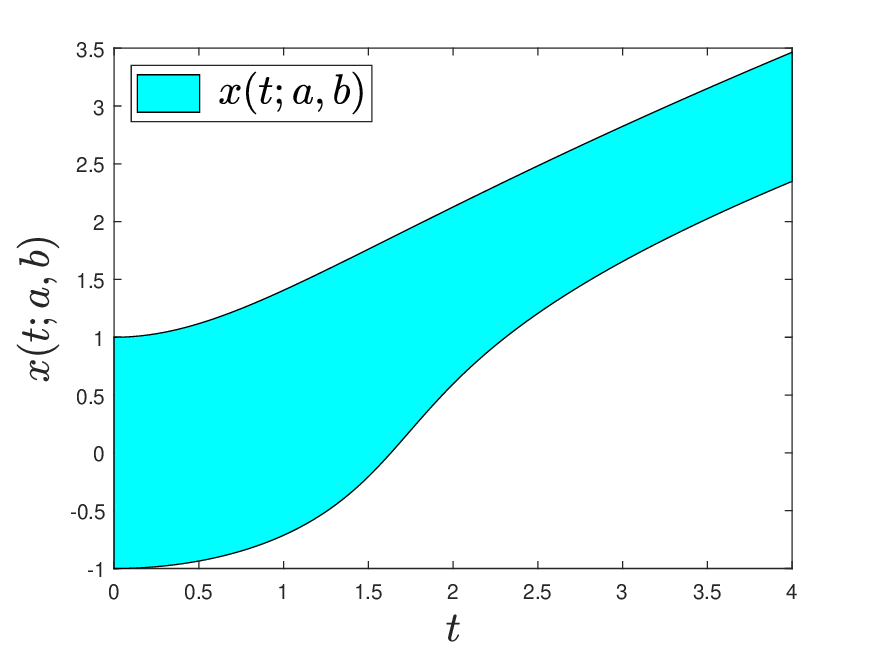}
\label{ex65}
}
   \caption{Solutions of \eqref{e1} and \eqref{e1-1}}\label{ex645}
\end{figure}
\begin{example}
  Considering the  following classical  differential equation with parameters
\begin{equation} \label{e1-1}
\begin{cases}
    x'(t)=\displaystyle\frac{b t}{1+   x^2(t)},&   t\in [0,4],~b\in [1,2],\\
  x(0) = a,& a\in [-1,1].
\end{cases}
\end{equation}
It is easy to see that the solution of  \eqref{e1-1} is
$$x(t;a,b)=   y(t;a,b)-y^{-1}(t;a,b) ,~~~ t\in [0,4],~ a\in [-1,1],~b\in [1,2],$$
where
$$y(t;a,b)=\frac12\left(6 b \,t^{2}+4 a^{3}+12 a +2 \sqrt{
9 b^{2} t^{4}+(12 a^{3} b +36 a b)t^{2}+4 a^{6}+24 a^{4}+ 36 a^{2}+16}\right)^{\frac13},$$
see Figure \ref{ex65}.

In fact, \eqref{e1-1} can be regarded as the parameter differential equation of \eqref{e1}, and from Figure \ref{ex645}, it can be seen that the two solutions are roughly consistent. Therefore, the presented algorithm is effective.
\end{example}

\begin{example}
Considering the interval  differential equation
\begin{equation} \label{e2}
\begin{cases}
 \ol  x'(t)=\displaystyle\frac{[1,2]\sin t}{1+ \ol x^2(t)} ,&   t\in [0,4],\\
\ol x(0) = [1,3].&
\end{cases}
\end{equation}
It is easy to see that
$$\ol x(t)=\left[\xi(t)- \xi^{-1}(t)-\ee^{ \eta(t)-\eta^{-1}(t)},
~\xi(t)- \xi^{-1}(t)+\ee^{ \eta(t)-\eta^{-1}(t)}\right],\qquad t\in[0,1],$$
where
$$\xi(t)=\frac12\bigg(-18 \cos t+74+2 \sqrt{81\cos^{2}t-666 \cos t+1385}\bigg)^{\frac13},$$
$$\eta(t)=\frac12\bigg(4 \sqrt{4+9 \left(\cos  t -1\right)^{2} \ln ^{2} 2}+12 \left(\cos t -1\right) \ln 2\bigg)^{\frac13},$$
see  Figure \ref{ex66}.
\end{example}

\begin{figure}[htp]
  \centering
\subfigure[{Solution of \eqref{e2}}]{
\includegraphics[width=0.47\linewidth]{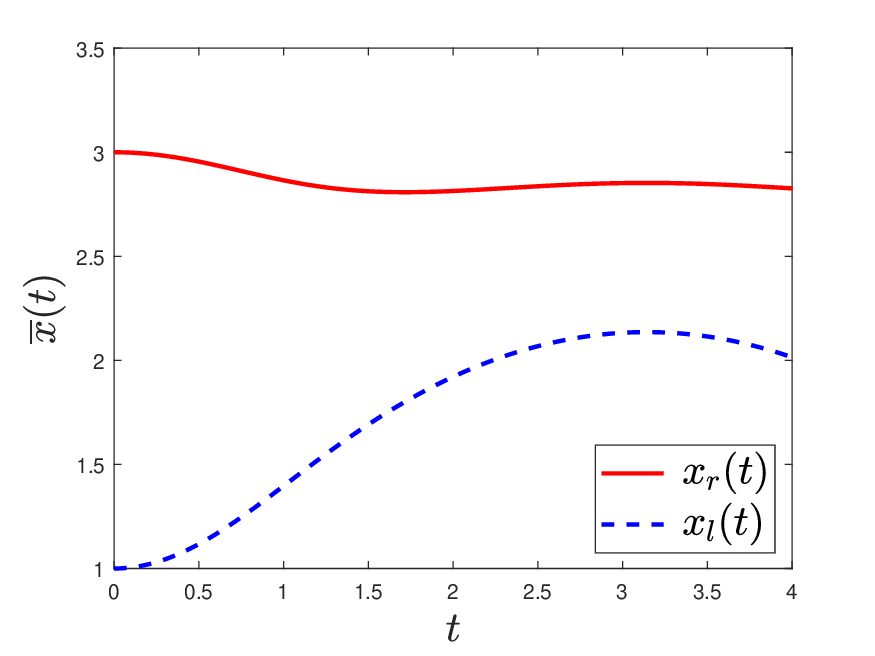}
\label{ex66}
}
  \subfigure[{Solution of \eqref{e2-1}}]{
\includegraphics[width=0.47\linewidth]{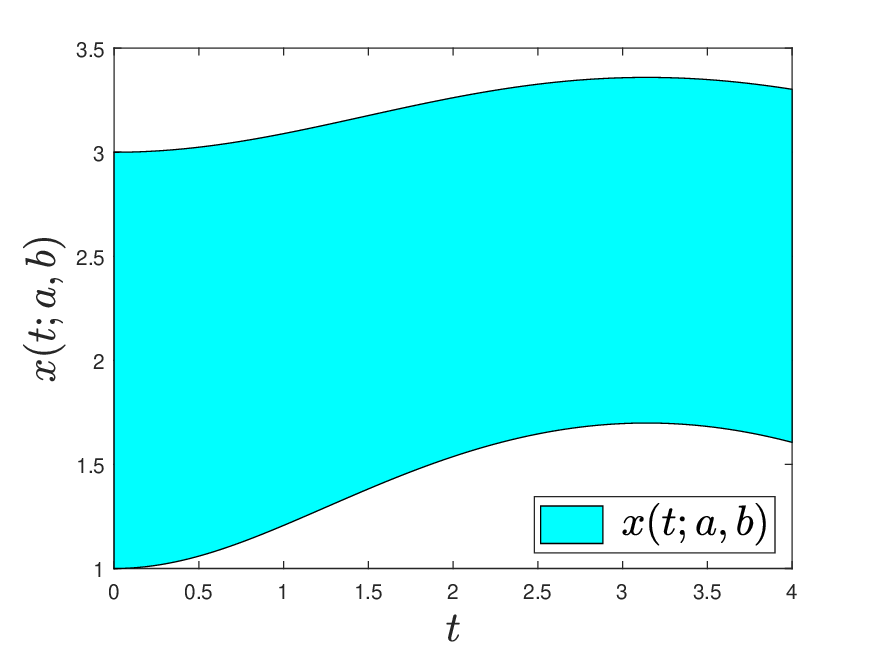}
\label{ex67}
}
   \caption{Solutions of \eqref{e2} and \eqref{e2-1}}\label{ex667}
\end{figure}

\begin{example}
  Considering the  following classical  differential equation with parameters
\begin{equation} \label{e2-1}
\begin{cases}
    x'(t)=\displaystyle\frac{b \sin t}{1+   x^2(t)},&   t\in [0,4],~b\in [1,2],\\
  x(0) = a,& a\in [ 1,3].
\end{cases}
\end{equation}
It is easy to see that
$$x(t;a,b)=  y(t;a,b) -  y^{-1}(t;a,b) ,~~~  t\in [0,4],~b\in [1,2],~ a\in [ 1,3],$$
where
$$y(t;a,b)=\frac12\left(4 a^{3}+12 a +12 b \left(1-\cos \! \left(t \right)\right)+4 \sqrt{\left(a^{3}+3 a +3 b \left(1-\cos \! \left(t \right)\right)\right)^{2}+4}\right)^{\frac13},$$
see Figure \ref{ex67}.

As showning in Figure \ref{ex667}, it can be seen that the two solutions of  \eqref{e2} and \eqref{e2-1} are roughly consistent.

\end{example}

Now we present two linear interval differential equations to show the differences between the new derivative and the $gH$-derivative.
\begin{example}
  Considering the  following  interval  differential equation
\begin{equation} \label{e3}
\begin{cases}
  \ol  x'(t)= -\ol x(t)+[1,2]t,&   t\in [0,3],\\
 \ol x(0) = [0,1].&
\end{cases}
\end{equation}
The corresponding parameter differential equation is given by
 \begin{equation} \label{e3-1}
\begin{cases}
     x'(t)= -  x(t)+b t,&   t\in [0,3],~ b\in [1,2],\\
 \ol x(0) =a, &a\in [0,1].
\end{cases}
\end{equation}
The corresponding interval differential equation under the $gH$-derivative is given by
 \begin{equation} \label{e3-2}
\begin{cases}
  \ol  x'_{gH}(t)= -1\odot\ol x(t)+[1,2]\odot t,&   t\in [0,3],\\
 \ol x(0) = [0,1].&
\end{cases}
\end{equation}
 The solution of \eqref{e3} is
\begin{align*}
  \ol x(t)=&[x_l(t),x_r(t)]\\
=&\left[2\ee^{-t}+\frac32(t-1)-\ee^{-2\ee^{-t}\ln 2-(t-1)\ln 2},2\ee^{-t}+\frac32(t-1)+\ee^{-2\ee^{-t}\ln 2-(t-1)\ln 2}\right],~t\in[0,3].
\end{align*}
 The solution of \eqref{e3-1} is
$$x(t;a,b)= \ee^{-t} \left(a +b \right)+b \left(t -1\right), ~t\in[0,3], ~a\in [0,1],~b\in [1,2].
$$
 The solutions of \eqref{e3-2} are\begin{align*}
   \ol x_{1}(t)&=[(x_1)_l(t),(x_1)_r(t)]=\left[2t-\ee^t+2\ee^{-t}-1, t+\ee^t+2\ee^{-t}-2\right], \\
 \ol x_{2}(t)&=[(x_2)_l(t),(x_2)_r(t)],
 \end{align*}
where
$$(x_2)_l(t)=\begin{cases}
  2t+2\ee^{-t}-2, &\text{ if } 0\leq t\leq 1,\\
  2t-\ee^{t-1}+2\ee^{-t}-1, &\text{ if } 1\leq t\leq 3,
\end{cases}$$$$ (x_2)_r(t)=\begin{cases}
   t+2\ee^{-t}-1, &\text{ if } 0\leq t\leq 1,\\
   t+\ee^{t-1}+2\ee^{-t}-2, &\text{ if } 1\leq t\leq 3.
\end{cases}$$

From Figure \ref{ex68}, we can see that the solution of \eqref{e3} is more close to that of \eqref{e3-1} than that of \eqref{e3-2}.
Moreover, as $t$ increases, the width of $\ol x_1$ and $\ol x_2$ gradually increases, and the rate of increase becomes faster and faster, which is inconsistent with the solution of \eqref{e3-1}.
\begin{figure}[ht]
  \centering
  \includegraphics[width=17cm]{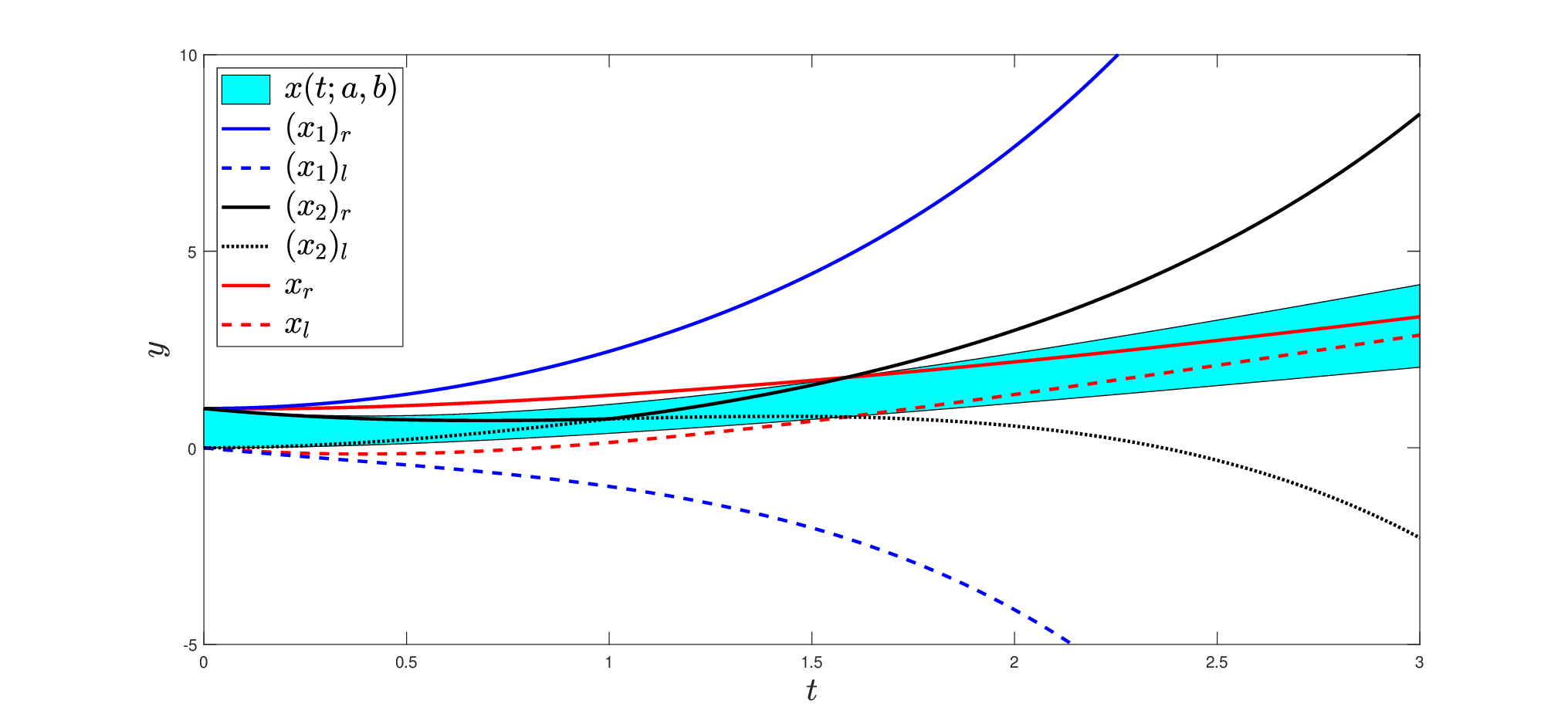}
  \caption{Solutions  of \eqref{e3}--\eqref{e3-2}}\label{ex68}
\end{figure}

\end{example}

\begin{example}
Considering the interval  differential equation
\begin{equation} \label{e4}
\begin{cases}
 \ol x'(t)= \ol x(t) \sin t,&   t\in [0,6],\\
\ol x(0) = [1,2].&
\end{cases}
\end{equation}
 The corresponding parameter differential equation is given by
 \begin{equation} \label{e4-1}
\begin{cases}
     x'(t)=    x(t) \sin t,&   t\in [0,6],\\
 \ol x(0) =a, &a\in [1,2].
\end{cases}
\end{equation}
The corresponding interval differential equation under the $gH$-derivative is given by
 \begin{equation} \label{e4-2}
\begin{cases}
  \ol  x'_{gH}(t)=  \ol x(t) \odot \sin t,&   t\in [0,6],\\
 \ol x(0) = [1,2].&
\end{cases}
\end{equation}
 The solution of \eqref{e4} is
$$\ol x(t)=[x_l(t),x_r(t)]=\left[\frac32\ee^{1-\cos t} -2^{-\ee^{1-\cos t} },\frac32\ee^{1-\cos t}+2^{-\ee^{1-\cos t} }\right],~t\in[0,6].$$
 The solution of \eqref{e4-1} is
$$x(t;a )= a\ee^{1-\cos t}  , ~t\in[0,6], ~a\in   [1,2].
$$
 The solutions of \eqref{e4-2} are\begin{align*}
   \ol x_{1}(t)&=[(x_1)_l(t),(x_1)_r(t)]=\left[\ee^{1-\cos t}, 2\ee^{1-\cos t}\right], \\
 \ol x_{2}(t)&=[(x_2)_l(t),(x_2)_r(t)]=\left[\frac32\ee^{1-\cos t}-\frac12\ee^{\cos t-1},\frac32\ee^{1-\cos t}+\frac12\ee^{\cos t-1}\right],\\
 \ol x_{3}(t)&=[(x_3)_l(t),(x_3)_r(t)],\\
 \ol x_{4}(t)&=[(x_4)_l(t),(x_4)_r(t)],
 \end{align*}
where
$$(x_3)_l(t)=\begin{cases}
  \ee^{1-\cos t},& t\in[0,\pi],\\
    \frac32\ee^{1-\cos t}-\frac12\ee^{3+\cos t},& t\in[\pi,6],\\
\end{cases}
 $$
$$ (x_3)_r(t)=\begin{cases}
  2\ee^{1-\cos t},& t\in[0,\pi],\\
   \frac32\ee^{1-\cos t}+\frac12\ee^{3+\cos t},& t\in[\pi,6],\\
\end{cases}$$
$$(x_4)_l(t)=\begin{cases}
  \frac32\ee^{1-\cos t}-\frac12\ee^{-1+\cos t},& t\in[0,\pi],\\
    \frac32\ee^{1-\cos t}-\frac12\ee^{-3-\cos t},& t\in[\pi,6],\\
\end{cases}
 $$
$$ (x_4)_r(t)=\begin{cases}
 \frac32\ee^{1-\cos t}+\frac12\ee^{-1+\cos t},& t\in[0,\pi],\\
 \frac32\ee^{1-\cos t}+\frac12\ee^{-3-\cos t},& t\in[\pi,6].\\
\end{cases}$$
From Figure \ref{ex69}, we can see that the solution $\ol x_3$   has the width issue; the latter half of $\ol x_4$  almost overlaps; $\ol x_2$   and $\ol x$ are almost identical, only $\ol x_1$ perfectly coincides with the solution $x(t,a,b)$.
However, due to the issue of switching points, as $t$ increases,  the number of solutions to  \eqref{e3-2}  will doubly increase, which makes solving the equation complicated.

\begin{figure}[H]
  \centering
  \includegraphics[width=17cm]{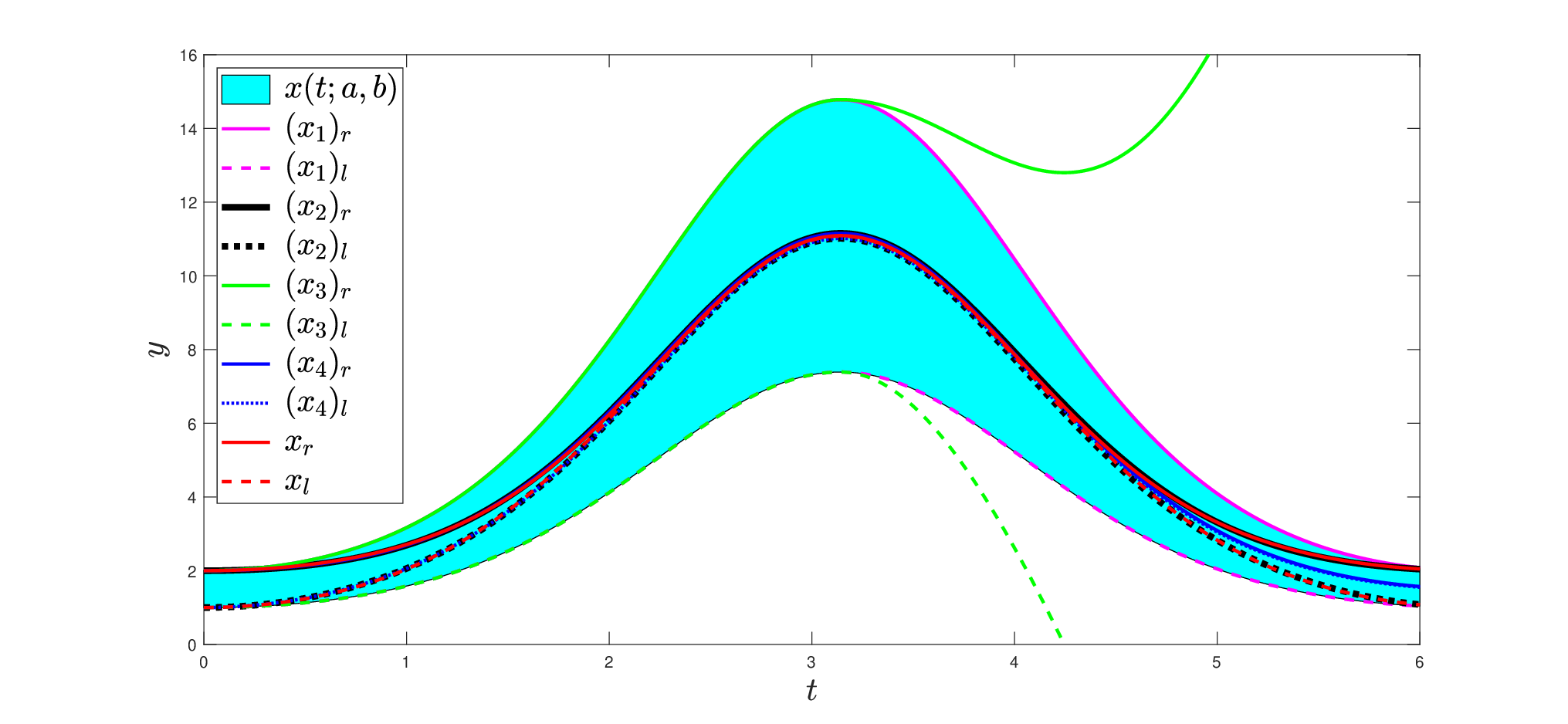}
  \caption{Solutions  of \eqref{e4}--\eqref{e4-2}}\label{ex69}
\end{figure}

\end{example}
\section{Conclusion}

In this paper, we jump out of the  original framework of interval arithmetic operations, and present new ones.  With the proposed arithmetic operations, the space $\RI$ is a linear space, while it is a semilinear space with traditional operations. Moreover, new distance, norm, and inner product are presented to show that $\RI$ is a Hilbert space. On this basis, novel definitions of derivative and integral for interval-valued functions are introduced, and their fundamental properties are systematically investigated. The proposed approach is particularly interesting due to its computational simplicity compared to existing methods. These new definitions simultaneously incorporate aspects of both the classical and multiplicative derivatives and integrals, offering a hybrid framework that retains the strengths of each. This dual-structure method not only simplifies calculations but also enhances the interpretability and applicability of the interval analysis. The classical derivative and integral are widely used in various applications. In contrast, the multiplicative derivative and integral are especially effective for modeling processes characterized by proportional or exponential growth, offering new perspectives in the analysis of interval-valued functions. Therefore, the physical interpretation of the newly introduced concepts of derivative and integral becomes more transparent and meaningful within this unified framework.

In the future, these results could be wildly applied  in other mathematical branches, such as interval integration  inequalities, interval differential equations, interval optimization,  interval decision making and so on. Also, it is easy to extend the new framework to fuzzy numbers, and the calculus of fuzzy-valued functions. These could be of interest in many fields, such as finance, economics, environment, and engineering.

\section*{Funding}

This paper was supported by ``the Fundamental Research Funds for the Central Universities" (No. B250201169).

\section*{Declaration of interests}

The authors declare that they have no known competing financial interests or
personal relationships that could have appeared to influence the work
reported in this paper.

\bibliographystyle{elsarticle-num}
\bibliography{ref}

\end{document}